\documentclass[a4paper, 11pt]{amsart}
\usepackage{amsmath,amsthm,amsfonts,amscd,eucal,mathtools,mathabx,amssymb,mathrsfs,pictex}
\numberwithin{equation}{section}
\usepackage{tikz}
\usetikzlibrary{cd}

\usepackage{scalerel} %Ridimensionamento simboli
\usepackage{upgreek}

\usepackage{xypic}
\usepackage{graphicx}
\usepackage{stackrel}

\newtheorem{theorem}{Theorem}[section]
\newtheorem{thm}{Theorem}[section]
\newtheorem{lem}[theorem]{Lemma}
\newtheorem{cor}[theorem]{Corollary}
\newtheorem{prop}[theorem]{Proposition}

\newtheorem{defin}[theorem]{Definition}

\theoremstyle{definition}

\newtheorem{rem}[theorem]{Remark}

\makeatletter
\newcommand\smallw{\scaleobj{0.75}{w}} %Ridimensiono w.
\newcommand\smallt{\scaleobj{0.75}{u}} %Ridimensiono u.

\newcommand\smallui{\scaleobj{0.55}{u_1}} %Ridimensiono u_1.
\newcommand\smalluii{\scaleobj{0.55}{u_2}} %Ridimensiono u_2.
\newcommand\smallund{\scaleobj{0.5}{u_{\text{nd}}}} %Ridimensiono u_{\text{nd}}.

\newcommand\smallv{\scaleobj{0.75}{v}} %Ridimensiono v.
\newcommand\smallf{\scaleobj{0.6}{\mathtt{F}}} %Ridimensiono \mathtt{F}.
\newcommand\smallk{\scaleobj{0.6}{\mathtt{K}}} %Ridimensiono \mathtt{K}.

\newcommand\smallbigcirc[1]{\vcenter{\hbox{\scalebox{0.84}{$\m@th#1\bigcirc$}}}}
\newcommand\make@circled[2]{\ooalign{$\m@th#1\smallbigcirc{#1}$\cr\hidewidth\raise.2ex\hbox{$\m@th#1#2$}\hidewidth\cr}}
\newcommand\make@circledvar[2]{\ooalign{$\m@th#1\smallbigcirc{#1}$\cr\hidewidth\raise.4ex\hbox{$\m@th#1#2$}\hidewidth\cr}} %Sovrappongo \smallbigcirc con \smallund.

\newcommand\ow{\mathbin{\mathpalette\make@circled\smallw}}
\newcommand\ot{\mathbin{\mathpalette\make@circled\smallt}} %Definisco \ot con la corretta spaziatura di un'operazione binaria tramite \mathbin.

\newcommand\of{\mathbin{\mathpalette\make@circled\smallf}} %Definisco \of con la corretta spaziatura di un'operazione binaria tramite \mathbin.

\newcommand\ok{\mathbin{\mathpalette\make@circled\smallk}} %Definisco \ok con la corretta spaziatura di un'operazione binaria tramite \mathbin.

\newcommand\ov{\mathbin{\mathpalette\make@circled\smallv}} %Definisco \ov con la corretta spaziatura di un'operazione binaria tramite \mathbin.

\newcommand\oui{\mathbin{\mathpalette\make@circled\smallui}} %Definisco \oui con la corretta spaziatura di un'operazione binaria tramite \mathbin.

\newcommand\ouii{\mathbin{\mathpalette\make@circled\smalluii}} %Definisco \ouii con la corretta spaziatura di un'operazione binaria tramite \mathbin.

\newcommand\ound{\mathbin{\mathpalette\make@circledvar\smallund}} %Definisco \ound con la corretta spaziatura di un'operazione binaria tramite \mathbin.

\makeatother

\def\cb{{\mathcal B}}

\def\ce{{\mathcal E}}

\def\ch{{\mathcal H}}
\def\ci{{\mathcal I}}

\def\cam{{\mathcal M}}

\def\cs{{\mathcal S}}

\def\cu{{\mathcal U}}

\def\ga{{\mathfrak A}}
\def\gb{{\mathfrak B}}
\def\gc{{\mathfrak C}}
\def\gd{{\mathfrak D}}

\def\ba{{\mathbb A}}
\def\bb{{\mathbb B}}
\def\bc{{\mathbb C}}
\def\bbf{{\mathbb F}}

\def\bn{{\mathbb N}}
\def\bp{{\mathbb P}}

\def\br{{\mathbb R}}
\def\bt{{\mathbb T}}

\def\bz{{\mathbb Z}}

\def\a{\alpha}
\def\b{\beta}
\def\g{\gamma}  \def\G{\Gamma}
  \def\D{\Delta}

\def\la{\lambda} 
\def\k{\kappa}
\def\m{\mu}

\def\n{\nu}

\def\s{\sigma} 
\def\t{\tau}
\def\f{\varphi}  \def\F{\Phi}
\def\th{\theta} 
\def\om{\omega}

\def\ker{\hbox{Ker}}
\def\ker{\mathop{\rm ker}}

\def\slim{\mathop{s\text{-lim}}}

\def\1{\mathds{1}}

\def\wlim{\mathop{\rm w-lim}}
\def\ulim{\mathop{\rm unif-lim}}

\newcommand{\ty}[1]{\mathop{\rm {#1}}}
\def\di{{\rm d}}

\def\re{\mathop{\rm Rep}}

\def\slim{\mathop{\rm s-lim}}
\def\idd{{1}\!\!{\rm I}}

\def\min{\mathop{\rm min}}
\def\max{\mathop{\rm max}}

\DeclareMathAlphabet{\mathpzc}{OT1}{pzc}{m}{it}

\begin{document}

\title[twisted tensor product]
{Infinite twisted $C^*$-tensor product and symmetric states}
\author{Francesco Fidaleo}
\address{Francesco Fidaleo\\
Dipartimento di Matematica \\
Universit\`{a} di Roma Tor Vergata\\
Via della Ricerca Scientifica 1, Roma 00133, Italy} \email{{\tt
fidaleo@mat.uniroma2.it}}
\author{Elia Vincenzi}
\address{Elia Vincenzi\\
Dipartimento di Matematica \\
Universit\`{a} di Roma Tor Vergata\\
Via della Ricerca Scientifica 1, Roma 00133, Italy} \email{{\tt
vincenzi@mat.uniroma2.it}}

\date{\today}

\begin{abstract}
The twisted $C^*$-tensor product was exhaustively investigated by the authors in the previous papers \cite{FV, FV1}. In this new context, generalising the usual tensor product and the Fermi one, we analyse the possibility of studying the set of symmetric states, that is those invariant under all finite permutations, in the setting of infinite twisted $C^*$-tensor products. As a preliminary result, we recognise that such an investigation can proceed only when the bicharacter, involved in the construction of such twisted tensor products, is hermitian. Otherwise, there is no natural action of the finitary symmetric group on the infinite twisted chain. Furthermore, even if the investigation of symmetric states is certainly meaningful for all twisted systems based on hermitian bicharacters, it is shown that it can be fruitfully carried out in three cases only. In these cases, we can provide the ``genuine" version of the celebrated De Finetti Theorem (cf. \cite{DeF}) already established in \cite{HS} for the general classical case, in \cite{St2} for the usual tensor product and in \cite{Ffe} for Fermi models. Very  surprisingly, it emerges that one more twisted model can be treated exhaustively: it corresponds to the chain twisted by the so-called Klein four-group, and its Klein bicharacter unique up to equivalence.

\vskip 0.3cm 
\noindent{\bf Mathematics Subject Classification}: 46L53, 46L05, 60G09, 46L30, 46N50.\\
{\bf Key words}: Noncommutative probability and statistics;
$C^{*}$--algebras, states; Exchangeability;  Klein-Wigner transformation; Klein four-group;
Applications to quantum physics.
\end{abstract}
\maketitle

\tableofcontents

\section{Introduction}
\label{sec1}

The present paper is the last part of a collaboration between the authors, which led to the PhD dissertation of the second one (cf. \cite{V}), and was aimed at carrying out a detailed investigation of the so-called {\it twisted tensor products}. The general part of such a construction is contained in our previous papers \cite{FV, FV1}, whereas this one is devoted to construct the infinite twisted $C^*$-tensor product chain based on a single graded $C^*$-algebra, the sample algebra. The natural successive step is analysing the possibility of defining a natural action of the group of all finite permutations, denoted by $\bp$, on such an infinite twisted product. After constructing this action, the ultimate step is investigating the set of {\it symmetric states}, that is those invariant under such an action, and providing the analogous of the ``genuine'' De Finetti Theorem in this entirely new picture. 

One of the main ingredient in constructing the twisted tensor product is the {\it bicharacter}.
More precisely, the gradings on two $C^*$-algebras $\ga$ and $\gb$ is established by continuous actions $G\stackrel{\a}{\curvearrowright}\ga$ and 
$H\stackrel{\b}{\curvearrowright}\gb$ of two compact abelian groups $G,H$, so that a bicharacter $u$ is a mapping on the duals $u:\widehat{G}\times\widehat{H}\to\bt$ such that $u(\,\,,\t)$ is a character on $\widehat{G}$ for each fixed $\t\in\widehat{H}$ and $u(\s,\,\,)$ is a character on $\widehat{H}$ for each fixed $\s\in\widehat{G}$. In Quantum Physics, the involved bicharacter $u$ provides the commutation rules of fields, that is operators localised in separated regions. In constructing the infinite chain, we only need a single  $C^*$-dynamical system $(\gb,G,\b)$ and a bicharacter $u$ which must be hermitian, that is satisfying $u(\s,\t)=\overline{u(\t,\s)}$, $\s,\t\in\widehat{G}$.

As a first fact, we show that the construction of an infinite chain on which the group of all finite permutations is acting in a natural way (i.e. reproducing the flip of operators localised in two adjacent places) can be developed only when the involved bicharacter is hermitian.
Even if the investigation of symmetric states is certainly meaningful for such twisted systems based on hermitian bicharacters, we demonstrate that such an analysis on the structure of symmetric states and the corresponding De Finetti Theorem (cf. \cite{DeF, HS}) can be fruitfully carried out, using the standard techniques of Ergodic Theory, only for one more case involving the Klein four-group, and its Klein bicharacter, unique up to equivalence.

Exchangeable stochastic processes, connected with symmetric probability measures on the measurable space of events, are a relevant tool in classical probability theory. In this context, De Finetti Theorem deals with the relationship between independence and exchangeability.
More precisely, let $X_1, X_2,\dots$ be a countably infinite sequence of random variables. Such a sequence is said to be {\it exchangeable} if, for each integer $n$ and any choice of a finite subset $X_{i_1},\dots,X_{i_n}$ of $n$ distinct random variables, it has the same joint probability distribution of any other choice of $n$ distinct random variables $X_{j_1},\dots,X_{j_n}$. 
It is known that if the infinite sequence $X_1, X_2,\dots$ of random variables is independent and identically distributed, it is exchangeable, but the converse is false. De Finetti Theorem simply asserts that an exchangeable sequence is indeed a ``mixture" of independent and identically distributed sequences of random variables in the sense expressed first in \cite{DeF}, and then in the Hewitt-Savage formulation \cite{HS}.

Due to the application to an enormity of cases of interest, De Finetti Theorem represents a milestone in probability theory, and it has many interesting equivalent formulations. It should also be remarked that several attempts to generalise these De Finetti-like results to the quantum situation have been done in the last years. Concerning the standard formulation of De Finetti result in the quantum setting, we mention \cite{St2} for the usual tensor product (where the involved bicharacter is trivial), and \cite{Ffe} for the Fermi model (based on the {\it Fermi bicharacter}, encoding the Canonical Anti-commutation Relations). It is then meaningful to carry on the investigation of the structure of symmetric states on the new models of (infinite) twisted $C^*$-tensor products built exploiting the study in the previous papers \cite{FV, FV1}, then generalising the usual and Fermi models just mentioned.

After two sections (Sections \ref{prtel} and \ref{bptwtep}) collecting the basic facts concerning the $C^*$-twisted tensor product of two $C^*$-algebras w.r.t. the minimal norm, for the convenience of the reader we use some of such results, in particular Proposition \ref{nondeg}, reducing the matter to nondegenerate bicharacters,
to establish $*$-isomorphisms between noncommutative tori associated to a rational parameter $\th=m/n$ such that $\gcd(m,n)=1$. The most relevant result falling in this class is when $\th=1/2$, for which we recover the well-known Fermi tensor product between two copies of the algebra of all continuous functions $C(\bt)$ on the unit circle $\bt$. This is reported in Section \ref{ratogato}. Recall that the noncommutative tori, also known as rotation algebras, are among the most interesting examples of $C^*$-twisted tensor product, in both the rational and irrational cases.

Section \ref{visot} contains a surprising result, obtained by the 2nd named author, completely falling in group theory. Since the first step in constructing the action of the group of all finite permutations on the infinite chain is the flip on the twisted tensor product of two copies of the same graded algebra inherited by the action $G\stackrel{\b}{\curvearrowright}\gb$,
Proposition \ref{flip1} says that this is possible only when the involved nondegenerate bicharacter $u$ is hermitian: $u(\s,\t)=\overline{u(\t,\s)}$, for each $\s,\t\in\widehat{G}$. In such a case, the
{\it isotropy subgroup} is defined as $\D_{+,u}:=\{\sigma\in\widehat{G}\,;\, u(\sigma,\sigma)=1\}$. It is also shown (cf. Section \ref{ssttptep}) that a fruitful investigation of the symmetric states can be carried out using the standard notions of Ergodic Theory (following the lines of Section 3.1 of \cite{S}) if the restriction of $u$ to the product $\D_{+,u}\times\D_{+,u}$ of the isotropy subgroup is identically 1. It becomes then crucial to find discrete groups and bicharacters satisfying this condition. Here comes the result: if a bicharacter $u$ on $\widehat{G}$ satisfies the three conditions (cf. Theorem \ref{three})
\begin{itemize}
\item[{\bf-}\!{\bf-}] $u$ is hermitian,
\item[{\bf-}\!{\bf-}] $u$ is nondegenerate (cf. Section \ref{prtel}),
\item[{\bf-}\!{\bf-}] $u\lceil_{\D_{+,u}\times\D_{+,u}}=1$,
\end{itemize}
then only three possibilities turn out, up to isomorphisms: $G=\{1\}$ and $u$ is the trivial bicharacter on the sum of two copies of the trivial group $\widehat{G}=\{0\}$, producing the usual tensor product; $G=\bz_2$ and $u$ is the Fermi bicharacter on $\bz_2\oplus\bz_2$ yielding the Fermi model; $G=\bz_2\times \bz_2$ is the Klein four-group, and $u$ is the Klein bicharacter on two copies of $\bz_2\oplus\bz_2$ unique up to equivalence.

The successive construction of the infinite chain and the action of the finitary symmetric group on it are very delicate tools of technical nature. This is done in Sections \ref{asszac}, \ref{sevnif} and \ref{opytg}. Starting from an action $G\stackrel{\b}{\curvearrowright}\gb$ of the compact abelian group $G$ on the sample $C^*$-algebra $\gb$ and a bicharacter $u:\widehat{G}\times\widehat{G}\to\bt$, the infinite chain constructed by using the minimal $C^*$-norm is $\ga\equiv\gb\ot\gb\ot\cdots\ot\gb\ot\cdots$. If the bicharacter is hermitian, it is shown that the flips exchanging elements localised in two adjacent positions can provide an action of the group $\bp$ of all finite permutations (which is generated by all transpositions of two adjacent indices) on the chain $C^*$-algebra $\ga$.

Section \ref{free} is of explicative nature and contains motivations allowing the investigation of symmetric states associated to a 
probabilistic scheme in the general setting of Quantum Probability following the lines in \cite{CF2}.

Sections \ref{ssttptep} and \ref{klsce} contain the main ingredients to discuss the structure of the symmetric states. First, Theorem \ref{erg} describes the main properties of a symmetric state and, in particular, a necessary and sufficient condition under which the $*$-weakly compact convex set of symmetric states $\cs_\bp(\ga)$ is $\bp$-abelian (in the sense of \cite{S}, Definition 3.1.11). Under such condition, we can deduce (cf. Remark \ref{choquet}) that $\cs_\bp(\ga)$ is a Choquet (indeed a Bauer) simplex and the quantum version of De Finetti Theorem holds true for such twisted models. At this stage, Corollary \ref{tpfekl} asserts that $\bp$-abelianess is granted when $u\lceil_{\D_{+,u}\times\D_{+,u}}=1$. Theorem \ref{three} then restricts the analysis to the three possibilities listed above.

Section \ref{klsce} is devoted to the detailed investigation of the third model relative to the Klein four-group, whereas
the concluding Section \ref{corem} lists some natural results, as well as open problems arising from our analysis. A part of this last section is devoted to the description of the types of von Neumann algebras $\pi_\om(\ga)''$ arising from the GNS representations of the symmetric states $\om$. A more detailed analysis relative to the extremal elements (i.e. infinite products of a single state on the sample algebra $\gb$) emerges when the action $\b$ providing the grading on $\gb$ is inner. This analysis takes advantage from the Klein transformation in Section 10 of \cite{FV}, recalled here in Section \ref{prtel}. In Section \ref{corem}, we also discuss the possible investigation of symmetric states on other (compatible) $C^*$-completions, such as the max-norm one, as well as on the infinite noncommutative tori, where the involved bicharacters are not hermitian. In particular, for the infinite irrational rotation algebra, the set of symmetric states is made by the singleton consisting of the canonical tracial state, that is the infinite product of the Haar measure on $\bt$. A richer situation is expected to hold true for the rational rotation algebra.

To end the present introduction, we point out the following considerations. First (cf. (i) in Theorem \ref{erg}), it emerges that any symmetric state is invariant under the, possibly proper, subgroup
$\D_{+,u}^\perp$, the annihilator of the isotropy subgroup $\D_{+,u}$. It obviously coincides with the whole acting group $G$ in the cases of usual tensor product and Fermi models. On the other hand, $\D_+^\perp$ is a proper subgroup of $G=\bz_2\times\bz_2$. This happens also for the noncommutative tori. Therefore, the Klein model allows a completely new investigation of its symmetric states. 

Even if the Klein four-group has nice applications included some in music (it is the basic group of permutations in the {\it twelve-tone technique}), it seems that exists no relevant model in Operator Algebras and/or Quantum Field theory involving it to the best knowledge of the authors. However, we would like to mention the interesting paper \cite{Sta}, motivated by $K$-theory, in which an action of such a group is constructed on the irrational rotation algebra.

The Klein four-group is primarily known in elementary geometry as the symmetry group of a non-square rectangle. It
has also nice applications in music (it is the basic group of permutations in the {\it twelve-tone technique}). 
Concerning the purely mathematical aspects, we would like to mention the interesting paper \cite{Sta} in which, motivated by $K$-theory, an action of such a group is constructed on the irrational rotation algebra. We point out that the Klein group naturally arises in Section \ref{visot} as a result of abstract group theory with the scope of fruitfully studying
the structure of symmetric states on an infinitely extended chain. It is then meaningful to ask about the possibility for the Klein group to be involved as a physical symmetry, like the Fermi case, for some relevant model in Quantum Field Theory. Here, we mention a collection of papers \cite{Mar, RW, Sch, SVJ, To} (quite far to be complete) in which this possibility is explored. However, to our knowledge, there is still no significant physical model having the Klein four-group as primary symmetry to date. It is therefore desirable to pursue a systematic line of research on physical models in which such a group determines the statistical properties of the particles involved, as in the Bose and Fermi cases.

\section{Preliminaries}
\label{prtel}

We gather here some useful information for the forthcoming sections. For the main properties and further details on the construction of the $C^*$-twisted tensor product, the reader is referred to \cite{FV, FV1}.

For each normed space $X$, the norm-closed linear subspace generated by any set $S\subset X$ is denoted as
$$
[S]:=\overline{{\rm span}(S)}^{\|\,\,\,\|_X}\,.
$$

\medskip

\noindent
\textbf{Finitary symmetric group.} 
Let $\ga$ be a unital $C^*$-algebra, and $G$ any discrete group which is acting on $\ga$ through an action 
$\bp\stackrel{\a}{\curvearrowright}\ga$ made of $*$-automorphisms of $\ga$.
In such a situation, the family of {\it invariant states} given by
$$
\cs_{G}(\ga):=\big\{\om\in\cs(\ga)\,;\,\om\circ\a_g=\om,\,\,\forall\,\,g\in G\big\}
$$ 
is a convex and $^*$-weakly compact subset of $\cs(\ga)$, with extremal points forming $\ce_\bp(\ga)\neq\varnothing$ (if $\ga\supsetneq\{0\}$), the family of the ($G$-)\textit{ergodic} states. 

Fix $\om\in\cs_{G}(\ga)$ and consider the GNS covariant representation $(\ch_\om,\pi_\om, V_\om,\xi_\om)$ associated to $\om$ (e.g. \cite{BR}). By von Neumann Mean Ergodic Theorem,
$$
\slim\limits_{n\to+\infty}\frac1{n}\sum_{k=0}^{n-1}V^k_\om=E_\om\,,
$$
where $E_\om$ is the selfadjoint projection onto the invariant vectors under $V_\om$.
With an abuse of notations, define the operator system 
$\pi_\omega(\ga)^G:=E_\omega\pi_\omega(\ga)E_\omega$. Observe that, for $x\in\ga$, 
$$
\pi_\omega(x)^G=0\iff\pi_\omega(x)(E_\om\ch_\omega)\subset(E_\om\ch_\omega)^\perp\,.
$$

We now specialise the situation to the group of all finite permutations of a given set.
Indeed, let $S$ be an infinite set and, for each finite set $F\subset S$, consider the symmetric group $\bp_F$ on $F$. The \textit{finitary symmetric group} is the direct limit of the class of $\bp_F$, for every finite set $F\subset S$:
\[
\bp_S:=\stackrel[\longrightarrow\,F]{}{\lim}\bp_F\,.
\]
$\bp_S$ is nothing but the group of permutations of $S$ moving only a finite number of its elements. 
For a $C^*$-dynamical system $(\ga,\bp_S,\a)$ consisting of a unital $C^*$-algebra $\ga$ and an action $\bp_S\stackrel{\a}{\curvearrowright}\ga$, the set of invariant states $\cs_{\bp_S}(\ga)$ is called the set of  \textit{symmetric states}.
In the present paper, the situation of interest is when $S=\bn$, the natural numbers, and simply denote $\bp:=\bp_\bn$.

In order to study the ergodic properties of symmetric states, we firstly recall the following combinatorial result which is interesting in itself. Set $\mathbf{n}:=\{0,1,\dots,n\}$. Notice that $\mathbf{n}$ consists of $n+1$ elements, and
$m\leq n$ $\Leftrightarrow$ $\mathbf{m}\subset\mathbf{n}$.
\begin{lem}[\cite{Ffe}, Lemma 5.1]\label{estimation}
Let $m,n\geq0$. For sufficiently large $N$, there exists some positive constant $C_{m,n}>0$ (depending on $m,n$ only) such that
\[
\frac{|\{g\in\bp_{\mathbf{N}}\colon\mathbf{m}\cap g(\mathbf{n})\neq\varnothing\}|}{N!}\leq C_{m,n}\,.
\]
\end{lem}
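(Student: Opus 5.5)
The plan is a direct union bound over the events ``some point of $\mathbf{n}$ is sent into $\mathbf{m}$''. Throughout, $N\geq\max(m,n)$, so that $\mathbf{m},\mathbf{n}\subset\mathbf{N}$, and $\bp_{\mathbf{N}}$ is the full symmetric group on the $N+1$ points of $\mathbf{N}$, of order $(N+1)!$.

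First, the condition $\mathbf{m}\cap g(\mathbf{n})\neq\varnothing$ holds exactly when there are $i\in\mathbf{n}$ and $j\in\mathbf{m}$ with $g(i)=j$. Hence the set to be counted is
$$
\bigcup_{i\in\mathbf{n}}\bigcup_{j\in\mathbf{m}}\{g\in\bp_{\mathbf{N}}\,;\,g(i)=j\}.
$$

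Second, I count each piece. For fixed $i,j\in\mathbf{N}$, the permutations with $g(i)=j$ correspond bijectively to the bijections $\mathbf{N}\setminus\{i\}\to\mathbf{N}\setminus\{j\}$. There are exactly $N!$ of them.

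Third, I sum over the $(n+1)(m+1)$ pairs $(i,j)$:
$$
\big|\{g\in\bp_{\mathbf{N}}\,;\,\mathbf{m}\cap g(\mathbf{n})\neq\varnothing\}\big|\leq (m+1)(n+1)\,N!\,.
$$
Dividing by $N!$ gives the statement with $C_{m,n}:=(m+1)(n+1)$. This constant depends only on $m,n$, and the bound holds for every $N\geq\max(m,n)$.

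I expect no real obstacle; the only point needing care is the normalisation. Since $|\bp_{\mathbf{N}}|=(N+1)!$, the same estimate can be read as
$$
\frac{|\{g\in\bp_{\mathbf{N}}\,;\,\mathbf{m}\cap g(\mathbf{n})\neq\varnothing\}|}{|\bp_{\mathbf{N}}|}\leq\frac{C_{m,n}}{N+1}\longrightarrow0 .
$$
This is the form in which the lemma will be used later: averaging over $\bp_{\mathbf{N}}$ moves elements localised in $\mathbf{n}$ away from $\mathbf{m}$ with probability tending to one. I would state this explicitly alongside the proof.
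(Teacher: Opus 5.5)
Your proof is correct and complete. The set is the union of the $(m+1)(n+1)$ sets $\{g\,;\,g(i)=j\}$ with $i\in\mathbf{n}$, $j\in\mathbf{m}$. Each of these has exactly $N!$ elements, so $C_{m,n}=(m+1)(n+1)$ works for every $N\geq\max(m,n)$.

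The paper gives no proof of this lemma; it imports it from \cite{Ffe}. So the comparison is with the natural alternative, which counts the complement exactly. The wording ``for sufficiently large $N$'' and the threshold $N\geq 2v+1$ in the proof of Proposition \ref{relations} suggest this is the route taken there. For $N\geq m+n+1$ there are $\frac{(N-m)!}{(N-m-n-1)!}\,(N-n)!$ permutations with $g(\mathbf{n})\cap\mathbf{m}=\varnothing$. Hence the ratio in the statement equals
\[
(N+1)-\frac{\prod_{k=0}^{n}(N-m-k)}{\prod_{k=0}^{n-1}(N-k)}\,,
\]
and this is bounded, tending to $(m+1)(n+1)$ as $N\to+\infty$.

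Your union bound avoids the asymptotic expansion and gives an explicit constant valid for all admissible $N$, not only large ones. The exact count, in turn, shows that your constant is asymptotically optimal. Your closing remark, dividing by $|\bp_{\mathbf{N}}|=(N+1)!$ to get $C_{m,n}/(N+1)\to0$, is exactly the form in which the lemma is used in the proof of Proposition \ref{relations}.
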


The sequence $\{g_n\}_{n\in\bn}\subset\bp$ defined, for $k\in \bn$, by
\[
g_n(k):=\begin{cases}
k+2^n\,&\text{if }0\leq k\leq 2^n-1\,,\\
k-2^{n}\,&\text{if }2^n\leq k\leq 2^{n+1}-1\,,\\
k\,&\text{if }2^{n+1}\leq k\,,
\end{cases}
\]
is useful in the sequel.

Finally, the {\it Cesàro mean} of any (vector or scalar) function $f$ will be denoted by, for the dummy variable $\hat g\in\bp$ and some appropriate limit (which is the usual one if $f$ is a numerical function or might be the limit in the strong or weak operator topology if $f$ is an operator-valued function),
$$
\cam\{f(\hat g)\}:=\lim_{N\to+\infty}\frac{1}{(N+1)!}\sum\limits_{g\in\bp_\mathbf{N}}f(g)\,,
$$
provided the above limit exists.

\medskip

\noindent
\textbf{Compact and discrete abelian groups.} 
In the present paper we mainly consider topological abelian groups, with the unique exception consisting of the permutation group described above. Such topological abelian groups are always supposed to be either Hausdorff and compact, or discrete. In the sequel, we omit to remark such algebraic/topological properties, just indicating when the group under consideration is compact or discrete. For $G$ in such class of groups, we indicate by
$\di\m_G=:\di g$ its Haar measure, normalised such that $\di\m_G(G)=1$ in the compact case and $\di\m_G(e_G)=1$ in the discrete one.

To focus our used notations, suppose that $G$ is compact. In such a situation, for the product operation we use the multiplicative convention and denote by 
$1\equiv e_G$ for its neutral element. 

The Pontryagin dual $\widehat{G}$ is discrete, and for such kind of groups we use the additive convention for the product operation denoting with 
$0\equiv e_{\widehat{G}}$ its neutral element.

\medskip

\noindent
\textbf{Bicharacters.} First of all, we report the definition of bicharacters on discrete abelian groups, see e.g. \cite{FV}. 
For $G,H$ discrete, a bicharacter is a function $u:\widehat{G}\times\widehat{H}\to\bt$ such that
\begin{align*}
&\widehat{G}\ni\s\mapsto u(\s,\t)\in\bt\,,\,\,\text{is a character for each fixed}\,\, \t\in\widehat{H}\,,\\
&\widehat{H}\ni\t\mapsto u(\s,\t)\in\bt\,,\,\,\text{is a character for each fixed}\,\, \s\in\widehat{G}\,.
\end{align*}

Let $A, B$ discrete abelian groups. If $u:A\times B\to\bt$ is a bicharacter, we define the left and right radical as
\begin{align*}
&A\supset{\rm Rad}_{\rm L}(u):=\{a\in A\colon u(a,b)=1,\,\,\forall b\in B\}\,,\\
&B\supset{\rm Rad}_{\rm R}(u):=\{b\in B\colon u(a,b)=1,\,\,\forall a\in A\}\,.
\end{align*}
It is immediate to show that ${\rm Rad}_{\rm L}(u)$ and ${\rm Rad}_{\rm R}(u)$ are both subgroups of $A$ and $B$, respectively. The bicharacter $u$ is said to be
{\it nondegenerate} if 
$$
{\rm Rad}_{\rm L}(u)=\{0_A\}\,\,\&\,\, {\rm Rad}_{\rm R}(u)=\{0_B\}\,.
$$

Suppose $A=B$, and let $u_1,u_2\colon A\times A\to\bt$ be two bicharacters. We say that $u_1$ and $u_2$ are \textit{equivalent}, and write $u_1\sim u_2$, if there exists a group automorphism $T\in\mathsf{Aut}(A)$ s.t. $u_1(\sigma,\tau)=u_2(T(\sigma),T(\tau))$ for every $\sigma,\tau\in A$.

\medskip

\noindent
\textbf{Twisted tensor products.} A $C^*$-dynamical system is a triple $(\ga,G,\a)$ describing a pointwise norm-continuous action $G\stackrel{\a}{\curvearrowright}\ga$
of the topological group $G$ on the $C^*$-algebra $\ga$. We consider the case when $\ga$ is unital (even if the non unital cases can be equally well investigated), and $G$ is a compact abelian group. In this situation, recall that the Pontryagin dual $\widehat{G}$ of $G$ is an abelian discrete group.

By performing the Fourier analysis, such an action $\a$ of $G$ induces a grading on $\ga$ 
through the linear projections 
$$
E^G_\s:=\int_G\overline{\chi_\s(g})\a_g(\,\cdot\,)\di g\,,\quad \s\in\widehat{G}\,.
$$
Since $\s_1\neq\s_2\Rightarrow E^G_{\s_1}(\ga)\cap E^G_{\s_2}(\ga)=\{0\}$, we can consider the (inner) direct sum, and we have that 
$\ga_o:=\dotplus_{\s\in\widehat{G}}\,E^G_\s(\ga)$ is dense in $\ga$. Elements $a\in E^G_\s(\ga)$ are called homogeneous of degree $\s$. Recall that $E_0^G$ consists of the usual conditional expectation of $\ga$ onto the invariant elements $E_0^G(\ga)\equiv\ga^G$ of $\ga$ under the action of $G$.
For a homogeneous elements $a\in E^G_\s(\ga)$, its {\it degree} $\partial a$ is precisely $\s\in\widehat{G}$.

Suppose we have two $C^*$-dynamical systems $(\ga,G,\a)$ and $(\gb,H,\b)$ with $G, H$ compact, together with a bicharacter
$u:\widehat{G}\times\widehat{H}\to\bt$. In such a situation, the linear tensor product $\ga_o\odot\gb_o$ is naturally equipped with
the involution and product given on homogeneous elements by
\begin{equation*}
(a\odot b)(A\odot B):=\overline{u(\partial A,\partial b)}aA\odot bB\quad
(a\odot b)^*:=\overline{u(\partial a,\partial b)}a^*\odot b^*\,.
\end{equation*}
It is uniquely extended on the whole $\ga_o\odot\gb_o$, making that an involutive algebra denoted by
$\ga_o\ot\gb_o$. The symbol $\ot$ is just used to point out that it is made up by using the bicharacter $u$.

Notice that, if $u_1$ and $u_2$ are two equivalent bicharacters on $\widehat{G}\times\widehat{H}$ (see above) then, necessarily, 
$\ga_o\oui\gb_o\sim\ga_o\ouii\gb_o$ as involutive algebras.

For the invariant states $\om\in\cs_G(\ga)$ and $\f\in\cs_H(\gb)$, we can consider the {\it product state} $\om\times\f$ on $\ga_o\ot\gb_o$ and the corresponding GNS representation
$\pi_{\om\times\f}$. It is shown that the class of product states separates the points of $\ga_o\ot\gb_o$: 
$$
(\om\times\f)(x^*x)=0\,\,\forall \om\in\cs_G(\ga)\,\,\text{and}\,\,\f\in\cs_H(\gb)\Rightarrow x=0\,.
$$

As for the simplest case when the bicharacter is trivial, and then the arising involutive algebra is just the algebraic tensor product algebra $\ga_o\otimes\gb_o$, the problem to investigating all $C^*$-norms on $\ga_o\ot\gb_o$ is a formidable task, see e.g. \cite{T}, Chapter IV. Here, for $x\in\ga_o\odot\gb_o$, we mention the minimal (or spatial) norm
$$
\|x\|_{\min}:=\sup\big\{\|\pi_{\om\times\f}(x)\|\,;\,\om\in\cs_G(\ga),\,\,\f\in\cs_H(\gb)\big\}\,,
$$
and the maximal (or universal) norm
$$
\|x\|_{\max}:=\sup\big\{\|\pi(x)\|\,;\,\pi\,\,\text{representation of}\,\,\ga_o\otimes\gb_o\big\} \,.
$$
The min and the max-seminorms are $C^*$-seminorms and, since $\|x\|_{\max}$ is finite and the product states separate the points,  they are indeed norms.
The corresponding completions of $\ga_o\ot\gb_o$ are denoted by $\ga\ot_{\min}\gb$ and $\ga\ot_{\max}\gb$, respectively.
 
 It is shown in \cite{FV1} that the max and the min-norm coincide if and only if either $\ga^G$ or $\gb^H$ is nuclear.

\section{Basic properties of the twisted tensor product}
\label{bptwtep}

Firstly, we complete Proposition 6.4. in \cite{FV}, making it suitable to our intents. Here, for a $C^*$-dynamical system $(\ga,G,\alpha)$, the involutive algebra generated by the homogeneous elements of $\ga$ will be denoted by $\ga^{(G)}_o$. Observe that if $N$ is a closed subgroup of $G$, then we get a $C^*$-dynamical system $(\ga,N,\alpha\lceil_{N})$, the homogeneous elements of which form $\ga_o^{(N)}$. It is clear that $\ga_o^{(N)}\supset\ga^{(G)}_o$, but we still write $\ga_o^{(N)}\cap\ga^{(G)}_o$ when viewing the elements of $\ga^{(G)}_o$ as graded by $\widehat{N}$.
\begin{prop}
\label{nondeg}
Let $(\ga,G,\alpha)$, $(\gb,H,\beta)$ be two $C^*$-dynamical systems. A bicharacter $u\colon\widehat{G}\times\widehat{H}\to\bt$ passes to the quotient modulo its left and right radicals $R$ and $S$, providing a nondegenerate bicharacter $w:(\widehat{G}/R)\times(\widehat{H}/S)\to\bt$. Then, $\ga^{(G)}_o\ot\gb^{(H)}_o\sim\big(\ga^{(R^\perp)}_o\cap\ga^{(G)}_o\big)\ow\big(\gb^{(S^\perp)}_o\cap\gb^{(H)}_o\big)\subset\ga^{(R^\perp)}_o\ow\gb^{(S^\perp)}_o$, where the latter twisted product is associated to the $C^*$-dynamical systems $\big(\ga,R^\perp,\alpha\lceil_{R^\perp}\big)$ and 
$\big(\gb,S^\perp,\beta\lceil_{S^\perp}\big)$. Moreover, $\ga\ot_{\min}\gb\sim\ga\ow_{\min}\gb$ as $C^*$-algebras.
\end{prop}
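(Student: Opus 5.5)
Via Pontryagin duality we have $\widehat{R^\perp}\cong\widehat{G}/R$ and $\widehat{S^\perp}\cong\widehat{H}/S$ (annihilators of subgroups of the discrete duals are closed subgroups of the compact groups). Concretely, restricting a character $\s\in\widehat{G}$ to $R^\perp$ gives exactly the class $[\s]\in\widehat{G}/R$. This is the first thing to set up. For $a\in E^G_\s(\ga)$ and $g\in R^\perp$ we then have $\a_g(a)=\chi_\s(g)a=\chi_{[\s]}(g)a$. Hence every $G$-homogeneous element of degree $\s$ is $R^\perp$-homogeneous of degree $[\s]$, i.e. $E^G_\s(\ga)\subset E^{R^\perp}_{[\s]}(\ga)$. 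More precisely,
$$
E^{R^\perp}_{[\s]}=\sum_{\rho\in R}E^G_{\s+\rho}
$$
on $\ga^{(G)}_o$, and this justifies the inclusion $\ga_o^{(G)}\subset\ga_o^{(R^\perp)}$ used in the statement. The same holds for $\gb$, $H$ and $S$. The bicharacter $u$ is constant on cosets of $R$ in the first variable and of $S$ in the second, by definition of the radicals. So $w([\s],[\t]):=u(\s,\t)$ is well defined, and it is nondegenerate because anything in the radical of $w$ lifts to the radical of $u$.

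The algebraic isomorphism is then the identity map on $\ga^{(G)}_o\odot\gb^{(H)}_o$. For homogeneous $a,A\in\ga^{(G)}_o$ and $b,B\in\gb^{(H)}_o$ the products and involutions in the two twisted structures are given by
$$
\overline{u(\partial A,\partial b)}\,aA\odot bB=\overline{w([\partial A],[\partial b])}\,aA\odot bB,\qquad \overline{u(\partial a,\partial b)}\,a^*\odot b^*=\overline{w([\partial a],[\partial b])}\,a^*\odot b^*.
$$
Here we use that the $R^\perp$-degree of $a$ is $[\partial a]$, and similarly for the other factors. By bilinearity the identity map is therefore a $*$-isomorphism onto the $*$-subalgebra $(\ga^{(R^\perp)}_o\cap\ga^{(G)}_o)\ow(\gb^{(S^\perp)}_o\cap\gb^{(H)}_o)$ of $\ga^{(R^\perp)}_o\ow\gb^{(S^\perp)}_o$. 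The intersections are the same linear spaces $\ga^{(G)}_o$ and $\gb^{(H)}_o$ with the coarser grading.

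The main obstacle is the $C^*$-statement: the minimal norms must agree, and the smaller algebra must be dense in the bigger one for the min-norm. The norm on the left is a supremum over product states $\om\times\f$ with $\om\in\cs_G(\ga)$ and $\f\in\cs_H(\gb)$. The norm on the right is a supremum over the larger families $\cs_{R^\perp}(\ga)$ and $\cs_{S^\perp}(\gb)$. For one inequality I would average: if $\om$ is $R^\perp$-invariant, then $\tilde\om:=\int_G\om\circ\a_g\,\di g$ is $G$-invariant. I would then compare $\pi_{\om\times\f}$ with $\pi_{\tilde\om\times\tilde\f}$. The cleaner route, which I would try first, is to use a characterisation of the minimal norm from \cite{FV} that is independent of the choice of invariant states. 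Namely, $\ga\ot_{\min}\gb$ is realised faithfully on $\ch_\pi\otimes\ch_\rho$ for any pair of faithful covariant representations $(\pi,U)$ of $\ga$ and $(\rho,V)$ of $\gb$, with the twisting implemented by the unitaries $U,V$ via $u$. A $G$-covariant faithful representation is in particular $R^\perp$-covariant. On vectors of $G$-degree $\s$ its twisting unitary depends only on $[\s]$, so the same concrete operators realise both the $u$-twisted and the $w$-twisted products. Hence $\|\cdot\|_{\min,u}=\|\cdot\|_{\min,w}$ on $\ga^{(G)}_o\odot\gb^{(H)}_o$.

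Finally, $\ga^{(G)}_o$ is norm dense in $\ga$, so $\ga^{(G)}_o\odot\gb^{(H)}_o$ is dense in $\ga\ow_{\min}\gb$, since the min-norm is a cross norm. The completions therefore coincide, giving $\ga\ot_{\min}\gb\sim\ga\ow_{\min}\gb$.
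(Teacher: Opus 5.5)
Your algebraic identification and your density argument are the same as in the paper. Your proof that the two minimal norms agree on $\ga^{(G)}_o\odot\gb^{(H)}_o$, however, takes a genuinely different route.

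\textbf{The paper's route.} It argues by two inequalities, staying inside the state-based definition of $\|\cdot\|_{\min}$. The $u$-norm is at most the $w$-norm simply because $\cs_G(\ga)\subset\cs_{R^\perp}(\ga)$ and $\cs_H(\gb)\subset\cs_{S^\perp}(\gb)$. Conversely, the supremum over $G$- and $H$-invariant product states is a compatible $C^*$-norm on $\ga^{(R^\perp)}_o\ow\gb^{(S^\perp)}_o$. By minimality (Theorem 13.1 of \cite{FV}) it therefore dominates the $w$-spatial norm.

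\textbf{Your route.} You get both inequalities at once from a twisted analogue of the representation-independence of the spatial tensor norm: the twisted spatial norm is computed by any pair of faithful covariant representations. You combine this with the observation that the concrete operators coincide. That observation should be phrased correctly, because what matters is the degree of the second factor, not ``vectors of $G$-degree $\sigma$''. For $b$ of degree $\tau$, let $g_\tau\in G$ be the twisting element defined by $\sigma(g_\tau)=\overline{u(\sigma,\tau)}$. It lies in $R^\perp$ because $u(R,\tau)=1$, and it depends only on $[\tau]$. Under $\widehat{R^\perp}\cong\widehat{G}/R$ it is exactly the element determined by $w(\cdot,[\tau])$. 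Hence $\pi(a)U(g_{\partial b})\otimes\rho(b)$ is literally the same operator for both products.

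\textbf{The missing lemma.} The representation-independence statement is never used in the paper. Unless you can locate it precisely in \cite{FV}, you have to prove it. It is true, and here is a sketch.
\begin{itemize}
\item Take $(\rho,V)$ covariant, and let $F_\tau$ be the spectral projections of $V$. Define the unitary $W:=\sum_{\tau\in\widehat{H}}U(g_\tau)\otimes F_\tau$. Let $h_\sigma\in H$ be given by $\tau(h_\sigma)=\overline{u(\sigma,\tau)}$. Then $W^*(\pi_U\ot\rho)(a\odot b)W=\pi(a)\otimes V(h_{\partial a})^*\rho(b)$.
\item So the left-twisted representation is unitarily equivalent to a right-twisted one that depends only on $\pi$ and $(\rho,V)$.
\item The usual spatial tensor norm does not depend on the faithful representations chosen. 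Applying this inside $C^*(\pi(\ga)\cup U(G))\otimes\rho(\gb)$ and inside $\pi(\ga)\otimes C^*(\rho(\gb)\cup V(H))$ removes the dependence on each pair.
\item The direct sum of the GNS representations of invariant states is faithful: average any state over $G$.
\item Finally, $\pi_{\om\times\f}$ identifies with ${\pi_\om}_{U_\om}\ot\pi_\f$.
\end{itemize}

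With this lemma your argument is complete, and it makes transparent why the two norms agree. The paper's argument is shorter, needing only the minimality theorem, and it never leaves the product-state definition of $\|\cdot\|_{\min}$.
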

\begin{proof}
As mere linear spaces $\ga^{(G)}_o\odot\gb^{(H)}_o=\big(\ga^{(R^\perp)}_o\cap\ga^{(G)}_o\big)\odot\big(\gb^{(S^\perp)}_o\cap\gb^{(H)}_o\big)$, and it is straightforward to prove that they are isomorphic as involutive algebras:
\[
\ga^{(G)}_o\ot\gb^{(H)}_o\sim\big(\ga^{(R^\perp)}_o\cap\ga^{(G)}_o\big)\ow\big(\gb^{(S^\perp)}_o\cap\gb^{(H)}_o\big)\,.
\]
Now, since clearly $\cs_G(\ga)\subset\cs_{R^\perp}(\ga)$ and $\cs_H(\gb)\subset\cs_{S^\perp}(\gb)$, on $\ga_o^{(G)}\ot\gb_o^{(H)}$ we have
\[
\sup\limits_{\substack{\omega\in\cs_G(\ga)\\\varphi\in\cs_H(\gb)}}\|\pi_{\omega\times\varphi}(\cdot)\|\leq\sup\limits_{\substack{\rho\in\cs_{R^\perp}(\ga)\\\psi\in\cs_{S^\perp}(\gb)}}\|\pi_{\rho\times\psi}(\cdot)\|\,.
\]
On the other hand, $\sup\limits_{\substack{\omega\in\cs_G(\ga)\\\varphi\in\cs_H(\gb)}}\|\pi_{\omega\times\varphi}(\cdot)\|$ is a compatible $C^*$-norm for the twisted tensor product $\ga^{(R^\perp)}_o\ow\gb^{(S^\perp)}_o$ hence by minimality of its spatial norm among all the compatible ones (see Theorem 13.1 in \cite{FV}) on $\ga^{(R^\perp)}_o\ow\gb^{(S^\perp)}_o$ (\emph{a fortiori}, on $\big(\ga^{(R^\perp)}_o\cap\ga^{(G)}_o\big)\ow\big(\gb^{(S^\perp)}_o\cap\gb^{(H)}_o\big)$) we have
\[
\sup\limits_{\substack{\rho\in\cs_{R^\perp}(\ga)\\\psi\in\cs_{S^\perp}(\gb)}}\|\pi_{\rho\times\psi}(\cdot)\|\leq\sup\limits_{\substack{\omega\in\cs_G(\ga)\\\varphi\in\cs_H(\gb)}}\|\pi_{\omega\times\varphi}(\cdot)\|\,.
\]
It follows that the isomorphism of the involutive algebras $\ga^{(G)}_o\ot\gb^{(H)}_o$ and $\big(\ga^{(R^\perp)}_o\cap\ga^{(G)}_o\big)\ow\big(\gb^{(S^\perp)}_o\cap\gb^{(H)}_o\big)$ is min-min isometric and extends to their respective completions. 

To conclude, notice that $\big(\ga^{(R^\perp)}_o\cap\ga^{(G)}_o\big)\ow\big(\gb^{(S^\perp)}_o\cap\gb^{(H)}_o\big)$ is min-dense in $\ga^{(R^\perp)}_o\ow\gb^{(S^\perp)}_o$. Indeed, consider $a_i\in\ga^{(R^\perp)}_o,b_i\in\gb^{(S^\perp)}_o$ for $i=1,\dots,n$. Then, by density of $\ga_o^{(G)}$ in $\ga$ (and hence in $\ga^{(R^\perp)}_o$), as well as of $\gb_o^{(H)}$ in $\gb_o^{(S^\perp)}$, for every $\varepsilon>0$ there exist $a_{\varepsilon,i}\in\ga_o^{(G)},b_{\varepsilon,i}\in\gb_o^{(H)}$ such that $\|a_i-a_{\varepsilon,i}\|_\ga,\|b_i-b_{\varepsilon,i}\|_\gb<\varepsilon$. Since the spatial norm is sub-cross,
\begin{multline*}
\left\|\sum\limits_{i=1}^na_i\ow b_i-\sum\limits_{i=1}^na_{\varepsilon,i}\ow b_{\varepsilon,i}\right\|_{\min}\leq\sum\limits_{i=1}^n\bigg(\|a_i-a_{\varepsilon,i}\|_\ga\|b_i\|_\gb+\|a_{\varepsilon,i}\|_\ga\|b_i-b_{\varepsilon,i}\|\bigg)\\<\varepsilon\left(n\varepsilon+\sum\limits_{i=1}^n(\|a_i\|_\ga+\|b_i\|_\gb)\right)\,.
\end{multline*}
It follows that the min-completion of $\big(\ga^{(R^\perp)}_o\cap\ga^{(G)}_o\big)\ow\big(\gb^{(S^\perp)}_o\cap\gb^{(H)}_o\big)$ is exactly $\ga\ow_{\min}\gb$, so that $\ga\ot_{\min}\gb\sim\ga\ow_{\min}\gb$ as $C^*$-algebras.
\end{proof}
Analogously, we provide the following complete version of Proposition 7.1. in \cite{FV}.
\begin{prop}
\label{flip1}
Once set $v(\t,\s):=\overline{u(\s,\t)}$ for every $\sigma,\tau\in\widehat{G}$, the flip $\F_o:\gb_o\ov\ga_o\to\ga_o\ot\gb_o$ defined in Proposition 7.1 of \cite{FV} by
$$
\F_o(b\odot a):=\overline{u(a,b)}a\odot b\,,\quad a\in\ga\,\,\text{and}\,\,b\in\gb\,\,\text{homogeneous}\,,
$$
extends to a $*$-isomorphism
$\F:\gb\ov_{\min}\ga\to\ga\ot_{\min}\gb$ intertwining the action of $H\times G$ with that of $G\times H$:
$$
\F\circ(\b_h\ov_{\min}\a_g)=(\a_g\ot_{\min}\b_h)\circ\F\,.
$$
\end{prop}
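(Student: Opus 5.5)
The plan is to build the $*$-isomorphism first at the algebraic level and then pass to the min-completions by showing it is isometric. Here $u(a,b)$ is shorthand for $u(\partial a,\partial b)$.

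\emph{Step 1 (algebraic level).} On homogeneous elements I check that $\F_o$ is multiplicative and $*$-preserving.

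For multiplicativity, take $b,B\in\gb_o$ and $a,A\in\ga_o$ homogeneous. In $\gb_o\ov\ga_o$ we have $(b\odot a)(B\odot A)=\overline{v(\partial a,\partial B)}\,bB\odot aA=u(\partial B,\partial a)\,bB\odot aA$. Applying $\F_o$ gives
\[
u(\partial B,\partial a)\,\overline{u(\partial a+\partial A,\partial b+\partial B)}\,aA\odot bB .
\]
On the other side, $\F_o(b\odot a)\F_o(B\odot A)=\overline{u(\partial a,\partial b)}\,\overline{u(\partial A,\partial B)}\,\overline{u(\partial A,\partial b)}\,aA\odot bB$. Expanding by bimultiplicativity shows the two scalars agree.

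For the involution, a similar check uses $(b\odot a)^*=\overline{v(\partial b,\partial a)}\,b^*\odot a^*=u(\partial a,\partial b)\,b^*\odot a^*$ together with $\partial a^*=-\partial a$.

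Bijectivity is immediate: the inverse is the analogous flip $\ga_o\ot\gb_o\to\gb_o\ov\ga_o$, since $u$ and $v$ determine each other.

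\emph{Step 2 (covariance).} Both $\b_h\ov\a_g$ and $\a_g\ot\b_h$ multiply $b\odot a$, respectively $a\odot b$, by the scalar $\chi_{\partial a}(g)\chi_{\partial b}(h)$. Hence $\F_o$ intertwines the two actions, which are well defined on the algebraic twisted products.

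\emph{Step 3 (isometry for the min norms).} This is the main point. I compare GNS representations of product states. For $\f\in\cs_H(\gb)$ and $\om\in\cs_G(\ga)$, I claim that $(\om\times\f)\circ\F_o=\f\times\om$ on $\gb_o\ov\ga_o$. Indeed, on $b\odot a$ the product state $\f\times\om$ gives $\f(b)\om(a)$. This vanishes unless $\partial a=0$ and $\partial b=0$, because the states are invariant. When both degrees are $0$ the phase $\overline{u(\partial a,\partial b)}$ equals $1$, so the two sides agree.

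Since $\F_o$ is a $*$-isomorphism, the GNS representation of $(\om\times\f)\circ\F_o$ is unitarily equivalent to $\pi_{\om\times\f}\circ\F_o$. Therefore $\|\pi_{\f\times\om}(x)\|=\|\pi_{\om\times\f}(\F_o x)\|$. Taking the supremum over all pairs of invariant states (the pairs are in bijection) gives $\|\F_o x\|_{\min}=\|x\|_{\min}$.

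\emph{Step 4 (extension).} $\F_o$ extends by continuity to an isometric $*$-homomorphism $\F$ with dense range, hence onto, so $\F$ is a $*$-isomorphism. The intertwining relation extends by continuity as well, since $\b_h\ov_{\min}\a_g$ and $\a_g\ot_{\min}\b_h$ are the continuous extensions of the algebraic actions.

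The delicate points I expect are the following. Proposition 7.1 of \cite{FV} may already contain Step 1, so I would cite it. For the passage to completions I would rely on the GNS-equivalence argument of Step 3. If an argument is needed that avoids the explicit state identity, I would instead use minimality of the spatial norm among compatible $C^*$-norms (Theorem 13.1 in \cite{FV}). The pulled-back norm $x\mapsto\|\F_o x\|_{\min}$ is a compatible $C^*$-norm on $\gb_o\ov\ga_o$, and symmetrically for the inverse flip, so the two norms coincide.
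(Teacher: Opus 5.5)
Your proof is correct. Steps 1, 2 and 4 agree with the paper, which also treats the algebraic part as a straightforward computation. It likewise obtains the intertwining on the algebraic level plus compatibility of the min-norm.

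There is one slip in Step 1. In $\gb_o\ov\ga_o$ the product is $(b\odot a)(B\odot A)=\overline{v(\partial B,\partial a)}\,bB\odot aA=u(\partial a,\partial B)\,bB\odot aA$. Your $u(\partial B,\partial a)$ puts the arguments in the wrong slots: it is ill-typed when $G\neq H$, and the phases would then cancel only for symmetric $u$. With the correct order, your expansion does give equality.

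The real difference is in Step 3. The paper uses the concrete realisation of $\pi_{\om\times\f}$ on $\ch_\om\otimes\ch_\f$, namely $a\odot b\mapsto\pi_\om(a)U_\om(g_{\partial b})\otimes\pi_\f(b)$. It then checks by hand that conjugation by the swap unitary $V$ carries $\pi_\f\ov{}_{U_\om}\pi_\om$ onto $({\pi_\om}_{U_\om}\ot\pi_\f)\circ\F_o$. This uses $\a_{g_{\partial b}}(a)=\overline{u(\partial a,\partial b)}\,a$ and $g_{\partial b}={}_{\partial b}g$.

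You bypass this with the identity $(\om\times\f)\circ\F_o=\f\times\om$ and uniqueness of the GNS construction. The identity uses only invariance of the states: only degree-zero components survive, and there the phase is $1$.

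What each route buys:
\begin{itemize}
\item Your route is shorter and needs nothing about how product representations are built from $U_\om$ and the elements $g_\t$. Boundedness of $\pi_{\f\times\om}$ is simply inherited from that of $\pi_{\om\times\f}$.
\item The paper's route exhibits the implementing unitary explicitly as the swap.
\end{itemize}
The two are the same at heart: the unitary $[x]\mapsto[\F_o x]$ implicit in your argument is exactly the paper's $V$. This holds under the canonical identifications of the GNS spaces with $\ch_\f\otimes\ch_\om$ and $\ch_\om\otimes\ch_\f$.

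Your fallback through minimality of the spatial norm among compatible $C^*$-norms (Theorem 13.1 of \cite{FV}) is also valid. It is the device the paper itself uses in Proposition \ref{nondeg}.
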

\begin{proof}
That $\F_o$ is a $*$-algebra homomorphism is easily verifiable by a straightforward computation. Let us check that it is isometric w.r.t. the corresponding minimal $C^*$-norms. 
For such a purpose, we start by performing a computation for homogeneous elements $a\in\ga_o$, $b\in\gb_o$, and invariant states $\om\in\cs_G(\ga)$ and $\f\in\cs_H(\gb)$.

Indeed, if $V\colon\ch_\f\otimes\ch_\om\to\ch_\om\otimes\ch_\f$ is the swapping unitary operator
\begin{align*}
({\pi_\om}_{U_\om}\ot\pi_\f)&(\F_o(b\odot a))=\overline{u(a,b)}\pi_\om(a)U_\om(g_{\partial b})\otimes\pi_\f(b)\\
=&\overline{u(a,b)}V\big(\pi_\f(b)\otimes U_\om(g_{\partial b})\big(U_\om(g_{\partial b})^*{\pi_\om}(a)U_\om(g_{\partial b}\big)\big)V^*\\
=&\overline{u(a,b)}V\big(\pi_\f(b)\otimes U_\om(g_{\partial b})\pi_\om(\a_{g^{-1}_{\partial b}}(a))\big)V^*\\
=&V\big(\pi_\f(b)\otimes U_\om(g_{\partial b})\pi_\om(a)\big)V^*\\
=&V\big(\pi_\f(b)\otimes U_\om({}_{\partial b}g)\pi_\om(a)\big)V^*\\
=&V(\pi_\f\ov {}_{U_\om}\pi_\om)(b\odot a)V^*\,.
\end{align*}
where we have used the fact that $g_{\partial b}={}_{\partial b}g$, since
\begin{equation}
\label{lgtr}
\chi_{g_{\partial b}}(\sigma)=\overline{u(\sigma,\partial b)}=v(\partial b,\sigma)=\chi_{{}_{\partial b}g}(\sigma),\,\,\sigma\in\widehat{G}\,.
\end{equation}

We have then proved that 
$$
\|\big(({\pi_\om}_{U_\om}\ot\pi_\f)\circ\F_o\big)(x)\|_{\cb(\ch_\omega\otimes\ch_\varphi)}=\|(\pi_\f\ov {}_{U_\om}\pi_\om)(x)\|_{\cb(\ch_\varphi\otimes\ch_\omega)}\,,\quad x\in\gb_o\ov\ga_o\,,
$$
and thus $\F_o$ extends to a $*$-isomorphism $\F:\gb\ov_{\min}\ga\to\ga\ot_{\min}\gb$ between the completions w.r.t. the corresponding minimal $C^*$-norms.

Lastly, the intertwining property of $\F$ follows from an easy check on the algebraic parts and the compatibility of the minimal norm.
\end{proof}
We end the present section briefly outlining the combined effects of the Klein transformations realising the isomorphism between twisted and usual (minimal) tensor products with the flips, provided the former exists. We refer the reader to \cite{FV} for details and proofs.

Indeed, suppose that the action $\a$ of the group $G$ on the first algebra $\ga$ is inner:
\begin{itemize}
\item[(i)] there is a unitary representation $G\ni g\mapsto u(g)\in\cu(\ga)$ such that
$$
\a_g(a)=u(g)au(g^{-1})\,,\quad a\in\ga,\,\,g\in G\,;
$$
\item[(ii)] such a representation $G\ni g\mapsto u(g)\in\cu(\ga)$ is continuous when $\ga$ is equipped with the seminorms 
$$
p_{\pi,\xi}(a):=\|\pi(a)\xi\|\,,\quad \pi\in\re(\ga),\,\,\xi\in\ch_\pi\,.
$$
\end{itemize}
By using \eqref{lgtr} to define
$$
g_\t\equiv{}_\t g\in G\,,\quad \t\in\widehat{H}\,,
$$
the left and right Klein transformations $\k_{\rm L}:\ga\ot_{\min}\gb\to\ga\otimes_{\min}\gb$ and $\k_{\rm R}:\gb\ov_{\min}\ga\to\gb\otimes_{\min}\ga$, are given on generators by
\begin{align*}
&\ga_o\odot E_\t^H(\gb)\ni a\ot b\mapsto\k_{\rm L}(a\ot b):=(a u_{\t})\otimes b\in\ga_o\otimes\gb_o\,,\\
&E_\t^H(\gb)\odot\ga_o\ni b\ot a\mapsto\k_{\rm R}(b\ot a):=b\otimes u_{\t}a\in\gb_o\otimes\ga_o\,.
\end{align*}
Such Klein transformations are indeed $*$-isomorphisms.

The left Klein transformation intertwines the product actions:
$$\k_{\rm L}\circ(\a\ot_{\min}\b)=(\a\otimes_{\min}\b)\circ\k_{\rm L}\,
$$
and sends product states onto invariant product states:
$$
\k_{\rm L}^{\rm t}(\psi_{\om,\f})=\om\times\f\,,\quad \om\in\cs(\ga),\,\,\f\in\cs_H(\gb)
$$
The right Klein transformation satisfies the analogous properties:
\begin{align*}
&\k_{\rm R}\circ(\b\ov_{\min}\a)=\b\otimes_{\min}\a\,,\\
&\k_{\rm L}^{\rm t}(\psi_{\f,\om})=\f\times\om\,,\quad \om\in\cs(\ga),\,\,\f\in\cs_H(\gb)\,.
\end{align*}

We now notice that the flip $\F$ in Proposition \ref{flip1} realises a $*$-isomorphism between $\gb\ov_{\min}\ga$ and $\ga\ot_{\min}\gb$. After denoting by 
$\s:\gb\otimes_{\min}\ga\to\ga\otimes_{\min}\gb$ the flip realising the $*$-isomorphism between the corresponding usual minimal tensor products, the combined effects of the flips and the Klein transformations and their transposed maps can be summarised in commutative diagrams. We leave the details to the reader.

\section{Rational noncommutative tori}
\label{ratogato}

The \emph{rational} rotation $C^*$-algebra gives a pedagogical example of the construction in Proposition \ref{nondeg}. Even the irrational rotation algebra is the most studied example, the rational rotation algebra yet presents many interesting aspects, see e.g. \cite{Bo}. Denote with $\ba_\a$ the noncommutative torus associated to $\a\in\br$.

Indeed, let $G=\bt=H$ the unit circle, acting on $\ga=C(\bt)=\gb$ via the usual rotation $\alpha_z(f)=f(z\,\cdot)=\beta_z(f)$ for each $z\in\bt$ and $f\in C(\bt)$. Let $u(x,y):=e^{i2\pi\frac{m}{n}xy}$ ($x,y\in\bz$) with $m,n\in\bn$ coprime. Using the notation in Proposition \ref{nondeg},
\[
R=S=\text{Rad}(u)=n\bz\,,
\]
so that
\begin{itemize}
\item $L^\perp=R^\perp\sim\bz_n\subset\bt$ and $\widehat{L}=\widehat{R}\sim\bt/\bz_n$;
\item $v\colon\bz_n\times\bz_n\to\bt$, $v(x,y)=e^{i\frac{2\pi}{n}mxy}$, $x,y\in\bz_n$;
\item $w\colon\bz_n\times\bz_n\to\bt$, $w(x,y)=e^{i\frac{2\pi}{n}(n-m)xy}$, $x,y\in\bz_n$;
\item $C(\bt)^\bt_o=\underset{k\in\bz}{\dotplus}\bc z^k$;
\item $C(\bt)^{\bz_n}_o\cap C(\bt)^\bt_o=\bigoplus\limits_{j=0}^{n-1}\overline{\underset{k\in\bz}{\dotplus}\bc z^{j+kn}}$;
\end{itemize}
and, in view of the last part of Proposition \ref{nondeg},
\[
\ba_{m/n}\equiv C(\bt)\ot_{\min}C(\bt)\sim C(\bt)\ov_{\min} C(\bt)
\]
as $C^*$-algebras. We note that, by nuclearity of $C(\bt)$, the minimal completion is the unique $C^*$-completion w.r.t. all compatible $C^*$-norms, see \cite{FV1}, Thm. 6.4. 
At this stage, we would like to note that
\begin{itemize}
\item the previous analysis can be carried out also for general (unital, to simplify) $C^*$-algebra $\ga$ equipped with an action of $\bt$ instead of the particular case $C(\bt)$: 
$\ga\ot_{\min}\ga\sim \ga\ov_{\min} \ga$;
\item by taking into account the well-known $*$-isomorphism $\ba_\a\sim\ba_{1-\a}$ for $\a\in(0,1/2)$, we again have
$$
C(\bt)\ov C(\bt)\sim\ba_{m/n}\sim\ba_{1-m/n}\sim C(\bt)\ow C(\bt)\,.
$$
\end{itemize}

A case of interest is that when $m=1$ and $n=2$ which corresponds to the Fermi twisted tensor product, see e.g. \cite{Ffe}. Indeed, for the involved bicharacters, $v=w$ both coinciding with the Fermi bicharacter given in (2) of Theorem \ref{three}, and then $\ba_{1/2}\sim C(\bt)\of_{\rm min}C(\bt)$.

\section{Hermitian bicharacters on discrete abelian groups}\label{visot}

In order to built multiple twisted tensor products based to a single $C^*$-dynamical system $(\ga, G,\a)$ as above,
on which acts in a natural way the permutations exchanging the indices, we easily deduce a necessary (and also sufficient) constrain on the involved bicharacter. It automatically follows by \cite{FV}, Proposition 7.1 and select only twisted tensor products associated to hermitian bicharacter on $\widehat{G}\times\widehat{G}$. 

A bicharacter $u:A\times A\to\bt$ on the discrete abelian group $A$ is {\it hermitian} if, by definition,
$$
u(a,b)=\overline{u(b,a)}\,,\quad a,b\in A\,.
$$

As suggested by Prop. \ref{nondeg}, we will see below that, in constructing (infinite) $C^*$-twisted tensor product, we can reduce the matter to nondegenerate bicharacters. Therefore, we directly deal with hermitian nondegenerate bicharacters.
We also note that, as suggested in \cite{St2} (see also \cite{Ffe}), in order to study the structure of symmetric states on the $C^*$-twisted tensor product chain by using the standard results of Ergodic Theory, we need a further condition on the common discrete group on (the product of two copies of) which is defined the bicharacter, that we are going to explain.

For the hermitian nondegenerate bicharacter $u:A\times A\to\bt$ on the discrete group $A$, we define the {\it isotropy subgroup} $\D_+\subset A$ given by
$$
\D_+\equiv\Delta_{+,u}:=\{a\in A\,;\, u(a,a)=1\}\,.
$$
It is elementary to see that $\D_+$ is indeed a subgroup of $A$. We obtain the following surprising result falling in the abstract theory of discrete abelian groups which has then a self-containing interest.
\begin{thm}[Vincenzi, \cite{V}]
\label{three}
Let $u:\widehat{G}\times\widehat{G}\to\bt$ be a hermitian nondegenerate bicharacter on the dual of the compact abelian group $G$, with $\D_+$ the corresponding isotropy subgroup. Suppose further that
$u\lceil_{\D_+\times\D_+}=1$. Then we have only the following three possibilities, up to the equivalence described in Section \ref{prtel}:
\begin{itemize}
\item[(1)] the trivial situation $G=\{1\}$ and $\widehat{G}=\{0\}$, with $u$ being the trivial bicharacter;
\item[(2)] $G$ the multiplicative group $\bz_2=\{1,-1\}$ and $\widehat{G}$ the additive group $\bz_2=\{0,1\}${\rm-mod 1}, with $u:=u_{\rm F}$ being the Fermi bicharacter
$$
u_{\rm F}(m,n)=(-1)^{mn}\,,\quad m,n\in \{0,1\}\,;
$$
\item[(2)] $G$ the multiplicative Klein group $\bz_2\times\bz_2$ and $\widehat{G}$ the additive Klein group $\bz_2\oplus\bz_2$, with $u:=u_{\rm K}$ being the Klein bicharacter
$$
u_{\rm K}({\bf m},{\bf n})=(-1)^{m_1n_1+m_2n_2}\,,\quad m_i,n_i\in \{0,1\},\,\, i=1,2\,.
$$
\end{itemize}
\end{thm}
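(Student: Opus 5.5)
Write $A:=\widehat{G}$ (discrete abelian) and $q(a):=u(a,a)$. The plan is to show that $A$ is an elementary abelian $2$-group of order at most $4$, and then classify $u$ directly.

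\emph{Step 1: $q$ is a homomorphism to $\{\pm1\}$.} Hermiticity gives $q(a)=\overline{q(a)}$, so $q(a)\in\{\pm1\}$. Biadditivity gives
$$
q(a+b)=q(a)q(b)u(a,b)u(b,a)=q(a)q(b)\,,
$$
again by hermiticity. Hence $q:A\to\{\pm1\}$ is a character and $\D_+=\ker q$ has index at most $2$.

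\emph{Step 2: $\D_+$ is tiny.} The hypothesis $u\lceil_{\D_+\times\D_+}=1$ says $\D_+$ is totally isotropic. If $\D_+=A$, then $u\equiv1$, and nondegeneracy forces $A=\{0\}$; this is case (1). Otherwise pick $c\notin\D_+$, so $A=\D_+\sqcup(c+\D_+)$. For $d\in\D_+$ the value $u(c,d)$ determines $u(\cdot,d)$ on all of $A$, since $u(\D_+,d)=1$. Nondegeneracy then says that $d\mapsto u(c,d)$ is an injective character of $\D_+$. Moreover $u(c,d)^2=u(2c,d)=1$, because $2c\in\D_+$. So this character takes values in $\{\pm1\}$ and is injective, which gives $|\D_+|\le2$ and hence $|A|\le4$.

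\emph{Step 3: classification.} If $|\D_+|=1$, then $A=\bz_2$ with $q(1)=-1$, i.e.\ $u=u_{\rm F}$; this is case (2). If $|\D_+|=2$, write $\D_+=\{0,d\}$ with $u(c,d)=-1$.
\begin{itemize}
\item[$\cdot$] If $A=\bz_4$ with generator $c$, then $d=2c$. Then $u(c,2c)=q(c)^2=1$, a contradiction. So this case is excluded.
\item[$\cdot$] Hence $A=\bz_2\oplus\bz_2$, with $q(c)=q(c+d)=-1$, $q(d)=1$ and $u(c,d)=-1$.
\end{itemize}
In the basis $e_1:=c$, $e_2:=c+d$, hermiticity gives $u(e_1,e_2)=\pm1$. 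Moreover $u(e_1,e_2)=q(c)u(c,d)=(-1)(-1)=1$. Together with $u(e_i,e_i)=-1$, the bicharacter is determined on the basis, so $u=u_{\rm K}$ up to the chosen automorphism.

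\emph{Main obstacle.} The key difficulty is the cardinality bound in Step 2. It needs both the injectivity obtained from nondegeneracy and the observation $2c\in\D_+$, which forces $u(c,\cdot)$ to be $\{\pm1\}$-valued on $\D_+$. Infinite or torsion-free $A$ is excluded by the same argument, since $|A|\le 2|\D_+|\le4$. The rest of the proof is a finite check.
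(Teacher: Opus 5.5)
Your proof is correct and follows the same route as the paper. You show that $q=u(\cdot,\cdot)$ is a $\{\pm1\}$-valued character, so $\D_+$ has index at most $2$; then nondegeneracy plus $u\lceil_{\D_+\times\D_+}=1$ embeds $\D_+$ into the characters of $\widehat{G}/\D_+\cong\bz_2$, which you phrase elementwise through $u(c,d)^2=u(2c,d)=1$ where the paper writes $\gamma\circ\iota(\D_+)\subset\ker\pi$. Beyond that, your exclusion of $\bz_4$ via $u(c,2c)=q(c)^2=1$ and your explicit basis $e_1=c$, $e_2=c+d$ of $\bz_2\oplus\bz_2$ spell out two facts the paper only asserts: the list of nondegenerate bicharacters on $\bz_4$, and the uniqueness of $u_{\rm K}$ up to equivalence.
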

\begin{proof}
Since $\Delta_+=\ker\phi$, where $\phi\in\text{Hom}(\widehat{G},\bz_2)$ is defined by $\phi(\sigma)=u(\sigma,\sigma)$, then
$\big|\widehat{G}\colon\Delta_+\big|=\big|\widehat{G}\colon\ker\phi\big|=|\text{ran }\phi|\leq|\bz_2|=2$. If $\big|\widehat{G}\colon\Delta_+\big|=1$ (i.e. $\Delta_+=\widehat{G}$), then $u\equiv1$, thus it is nondegenerate if and only if $\widehat{G}=\{0\}$. Therefore, from now on, we suppose that $\big|\widehat{G}\colon\Delta_+\big|=2$. The inclusion map $\iota\colon\Delta_+\hookrightarrow \widehat{G}$ induces a canonical surjection $\pi\colon \widehat{\widehat{G}}\twoheadrightarrow\widehat{\Delta_+}$ such that 
$$
|\ker\pi|=\Big|\widehat{\widehat{\phantom{G}}\!\!\!\!\!G/\Delta_+}\Big|=|\bz_2|=2\,.
$$
On the other hand, since $u$ is nondegenerate, the map
\[
\begin{aligned}
\gamma\colon \widehat{G}&\hookrightarrow\widehat{\widehat{G}}\\
\sigma&\mapsto u(\sigma,\cdot)
\end{aligned}
\]
is a group monomorphism. Now, $u\lceil_{\Delta_+\times\Delta_+}\equiv1$, and thus the composition
\[
\Delta_+\xhookrightarrow{\gamma\circ\iota}\widehat{\widehat{G}}\stackrel{\pi}{\twoheadrightarrow}\widehat{\Delta_+}
\]
is the trivial homomorphism. This means that $(\gamma\circ\iota)(\Delta_+)\subset\ker\pi$, hence
\[
|\Delta_+|=|(\gamma\circ\iota)(\Delta_+)|\leq|\ker\pi|=2\,.
\]
Since $\big|\widehat{G}\colon\Delta_+\big|=2$ by assumption, by Lagrange's theorem
\[
|\widehat{G}|=\big|\widehat{G}\colon\Delta_+\big||\Delta_+|=2|\Delta_+|
\]
whence $|\widehat{G}|=2$ or $4$. If $|\widehat{G}|=2$, then $\widehat{G}\sim\bz_2$, $u\sim u_{\rm{F}}$ and $\Delta_+=\{0\}$. If instead $|\widehat{G}|=4$, then either $\widehat{G}\sim\bz_4$ or $\widehat{G}\sim K_4:=\bz_2\oplus\bz_2$. On the one hand, the only two nondegenerate bicharacters on $\bz_4$ have the form
\[
u_\varepsilon(m,n)=\imath^{\,\varepsilon\,m\,n}\quad(m,n\in\bz_4)
\]
for $\varepsilon\in\{\pm1\}$, thus they are not hermitian. On the other hand, $K_4$ admits a unique (up to equivalence) nondegenerate hermitian bicharacter such that $|K_4\colon\Delta_+|=2$, that is 
$$
u_{\rm K}({\bf m},{\bf n})=(-1)^{m_1n_1+m_2n_2}\,,\quad m_j,n_j\in \{0,1\},\,\, j=1,2\,.
$$
Observe that $\Delta_+=\{(0,0),(1,1)\}$ and $u_{\rm{K}}\lceil_{\Delta_+\times\Delta_+}\equiv1$. Then, $u\sim u_{\rm K}$.
\end{proof}
Two of these cases provide already investigated situations. In fact, (1) corresponds to the usual tensor product studied in \cite{St2} and (2) corresponds to the Fermi tensor product studied in \cite{Ffe}, whereas the case (3) is completely new.

\section{Associativity of the minimal twisted tensor product}\label{asszac}

Every pair of $C^*$-dynamical systems $(\gb,G,\beta)$, $(\gc,G,\gamma)$, together with a bicharacter $u$ on 
$\widehat{G}\times\widehat{G}$, induces a new system $(\gb\ot\gc,G,\delta^{(\beta,\gamma)})$ where $\delta^{(\beta,\gamma)}$ is the diagonal action of $G$ on $\gb\ot\gc$:
\[
\delta^{(\beta,\gamma)}_g(b\ot c):=\beta_g(b)\ot\gamma_g(c),\quad g\in G,b\in\gb,c\in\gc\,.
\]
Then, surely $\gb_o\ot\gc_o\subseteq(\gb\ot\gc)_o$, but the inclusion may well be proper. For instance, if
$
(\gb,G,\beta)=(C(\bt),\bz_2,\th)=(\gc,G,\gamma)
$
where $\th$ is the involution induced by the $\pi$-rotation on $\bt$, then for any $A,B\in\bc$ and $f,g\in C(\bt)$,
\[
A\cos f\ot\cos g+B\sin f\ot\sin g
\in\left(C(\bt)\ot_{\min}C(\bt)\right)_o\smallsetminus\left(C(\bt)_o\ot C(\bt)_o\right)
\]
where
\[
\begin{cases}
\cos(f):=\ulim\limits_{n\to\infty}\sum\limits_{0\leq k\leq n}(-1)^k\frac{f^{2k}}{(2k)!}\\
\sin(f):=\ulim\limits_{n\to\infty}\sum\limits_{0\leq k\leq n}(-1)^k\frac{f^{2k+1}}{(2k+1)!}
\end{cases}
\]
Nonetheless, $G$ still acts diagonally on $\gb_o\ot\gc_o$, so that $(\gb_o\ot\gc_o,\|\,\,\|_{\min})$ is well a $\widehat{G}$-graded pre-$C^*$-algebra, obviously dense in $\gb\ot\gc$. As a result, it makes sense to study $(\gb_o\ot\gc_o)\ot\gd_o$ for any other $C^*$-dynamical system $(\gd,G,\lambda)$. The following lemma asserts that the minimal twisted tensor product is associative in this particular case, thus giving a non-ambiguous meaning to expressions like $\ga_2:=\gb\ot\gb\ot\gb$, and more in general to the minimal twisted tensor product $\ga_n$ of $n+1$ copies of $\gb$.
\begin{lem}\label{associativity}
With the notation above, $(\gb_o\ot\gc_o)\ot\gd_o\sim\gb_o\ot(\gc_o\ot\gd_o)$ as involutive algebras. The isomorphism extends to the minimal completions so that $(\gb\ot\gc)\ot\gd\sim\gb\ot(\gc\ot\gd)$.
\end{lem}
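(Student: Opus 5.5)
The plan is to work first at the purely algebraic level and then pass to the completions by a norm comparison built on product states.

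\emph{Algebraic isomorphism.} As linear spaces, both sides are the triple algebraic tensor product $\gb_o\odot\gc_o\odot\gd_o$, so I take the canonical linear identification $(b\odot c)\odot d\mapsto b\odot(c\odot d)$ on homogeneous elements. The degree of $b\ot c$ under the diagonal action $\delta^{(\beta,\gamma)}$ is $\partial b+\partial c$. Hence the product in $(\gb_o\ot\gc_o)\ot\gd_o$ of $(b\ot c)\ot d$ with $(B\ot C)\ot D$ carries the scalar
$$
\overline{u(\partial B+\partial C,\partial d)}\,\overline{u(\partial B,\partial c)}\,.
$$
In $\gb_o\ot(\gc_o\ot\gd_o)$ the corresponding scalar is
$$
\overline{u(\partial B,\partial c+\partial d)}\,\overline{u(\partial C,\partial d)}\,.
$$
Expanding both with the bicharacter property gives the same product of three factors, $\overline{u(\partial B,\partial c)u(\partial B,\partial d)u(\partial C,\partial d)}$. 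The involution is handled the same way: both sides give $\overline{u(\partial b,\partial c)u(\partial b,\partial d)u(\partial c,\partial d)}\,b^*\odot c^*\odot d^*$. So the identification is a $*$-isomorphism, and it clearly intertwines the diagonal $G$-actions.

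\emph{Isometry for the minimal norms.} I would show that the minimal norm on each side equals the ``triple spatial'' norm
$$
\sup\|\pi_{\om\times\f\times\psi}(\cdot)\|\,,\qquad \om\in\cs_G(\gb),\ \f\in\cs_G(\gc),\ \psi\in\cs_G(\gd)\,.
$$
Take the left side. Its minimal norm is a supremum over product states $\rho\times\psi$, with $\rho$ a $G$-invariant state on $\gb\ot_{\min}\gc$. The inequality ``$\geq$'' is immediate: if $\om,\f$ are invariant, then $\om\times\f$ is an invariant state on $\gb\ot\gc$, and its GNS representation is the twisted product representation ${\pi_\om}_{U_\om}\ot\pi_\f$. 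Composing, $\pi_{(\om\times\f)\times\psi}$ is unitarily equivalent to the natural triple representation on $\ch_\om\otimes\ch_\f\otimes\ch_\psi$, whose operators are $\pi_\om(b)U_\om(g_{\partial c+\partial d})\otimes\pi_\f(c)U_\f(g_{\partial d})\otimes\pi_\psi(d)$. The same representation appears as $\pi_{\om\times(\f\times\psi)}$ on the right side, which gives the symmetric inequality there.

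\emph{Main obstacle.} The difficulty is the reverse inequality, since a general invariant $\rho$ on $\gb\ot_{\min}\gc$ need not be a product state. I would avoid it through minimality, as in the proof of Proposition \ref{nondeg}. The triple spatial seminorm is a compatible $C^*$-norm on $(\gb_o\ot\gc_o)\ot\gd_o$: it is a $C^*$-norm, it is nonzero because product states separate points, and it is compatible with the gradings. Theorem 13.1 of \cite{FV} says the spatial norm is the smallest compatible $C^*$-norm, so the minimal norm is dominated by the triple norm. Combined with the previous step, the two are equal on each side, so the algebraic isomorphism is isometric.

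\emph{Passing to completions.} Since $\gb_o\ot\gc_o$ is min-dense in $\gb\ot\gc$, the isomorphism extends to $(\gb\ot\gc)\ot\gd\sim\gb\ot(\gc\ot\gd)$. The sub-cross density estimate from the proof of Proposition \ref{nondeg} is what identifies these completions with the twisted products of the full completed algebras.
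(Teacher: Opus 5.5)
Your proof is correct, but it reaches the completions by a different route from the paper. The algebraic step is identical: you both check the product and involution scalars on homogeneous triples.

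\emph{How the paper handles the completions.} After the $*$-isomorphism, it passes directly (``and hence'') to an isomorphism of the min-completions. It then uses the cross property of the min-norm and the embedding of marginals (Propositions 12.5 and 12.7 of \cite{FV}) to identify those completions with $(\gb\ot\gc)\ot\gd$ and $\gb\ot(\gc\ot\gd)$.

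\emph{How you handle them.} You prove that the identification is min--min isometric. Both min-norms coincide with the triple spatial norm $\sup_{\om,\f,\psi}\|\pi_{\om\times\f\times\psi}(\cdot)\|$. One inequality holds because $\om\times\f$ (resp.\ $\f\times\psi$) is an invariant state of the inner factor. The other follows from minimality of the spatial norm among compatible norms, the same device used in Proposition \ref{nondeg}. Density then comes from the sub-cross estimate.

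\emph{What each route buys.} Yours justifies a step the paper leaves implicit. The two min-norms are a priori defined by different families of states: arbitrary invariant states on $\gb\ot\gc$ on one side, on $\gc\ot\gd$ on the other. So isometry does not follow automatically from the $*$-isomorphism. As a by-product, you also get an intrinsic description of the norm of $\ga_n$ as a supremum over $(n+1)$-fold product states. The paper's route is shorter but rests on that unstated isometry.

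\emph{What your route still owes.} There are three checks you only assert.
\begin{itemize}
\item \textbf{Separation.} The triple product states must separate points. To show this, isolate the $\widehat G$-degree of the last leg by conjugating with $1\otimes1\otimes U_\psi(h)$ and integrating against characters. Then use that $\bigoplus_{\om,\f}\pi_{\om\times\f}$ is faithful on $\gb\ot_{\min}\gc$, by the very definition of the min-norm.
\item \textbf{Compatibility.} The triple norm must be compatible. This holds because these states are invariant under the full $G^3$-action, implemented by $U_\om(h_1)\otimes U_\f(h_2)\otimes U_\psi(h_3)$.
\item \textbf{Domain of Theorem 13.1.} Apply Theorem 13.1 of \cite{FV} on $(\gb\ot\gc)_o\ot\gd_o$, where the triple representations are still defined, and then restrict. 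This matters because $\gb_o\ot\gc_o$ may be strictly smaller than $(\gb\ot\gc)_o$.
\end{itemize}
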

\begin{proof}
By associativity of the tensor product $\odot$, it is clear that $(\gb_o\ot\gc_o)\ot\gd_o\sim\gb_o\ot(\gc_o\ot\gd_o)$ as linear spaces. Let $*_{L}$ and ${*}_R$ be the adjoint operations on the involutive algebras $(\gb_o\ot\gc_o)\ot\gd_o$ and $\gb_o\ot(\gc_o\ot\gd_o)$, respectively. Similarly, let $\cdot_{L}$ and $\cdot_R$ be their respective products. For homogeneous $x_1\in\gb_o$, $x_2\in\gc_o$ and $x_3\in\gd_o$ of degree $\sigma_i\in\widehat{G}$ ($i=1,2,3$) respectively, by recalling that $\partial(x_i\ot x_j)=\sigma_i+\sigma_j\in\widehat{G}$, we get
\begin{multline*}
((x_1\ot x_2)\ot x_3)^{*L}=\overline{u(\sigma_1+\sigma_2,\sigma_3)}(x_1\ot x_2)^*\ot x_3^*=\\=\overline{u(\sigma_1+\sigma_2,\sigma_3)}\,\,\overline{u(\sigma_1,\sigma_2)}(x_1^*\ot x_2^*)\ot x_3^*=\\=\overline{u(\sigma_1,\sigma_2+\sigma_3)}\,\,\overline{u(\sigma_2,\sigma_3)}x_1^*\ot (x_2^*\ot x_3^*)=\\=\overline{u(\sigma_1,\sigma_2+\sigma_3)}x_1^*\ot (x_2\ot x_3)^*=(x_1\ot (x_2\ot x_3))^{*R}
\end{multline*}
where we have used the associativity of $\odot$ on the elementary tensors. If $X_1\in\gb_o$, $X_2\in\gc_o$ and $X_3\in\gd_o$ are other three homogeneous elements of degree $\tau_i\in\widehat{G}$ ($i=1,2,3$), again using the associativity of $\odot$,
\begin{multline*}
((x_1\ot x_2)\ot x_3)\cdot_L((X_1\ot X_2)\ot X_3)=\\=\overline{u(\tau_1+\tau_2,\sigma_3)}\left((x_1\ot x_2)(X_1\ot X_2)\right)\ot x_3X_3=\\
=\overline{u(\tau_1+\tau_2,\sigma_3)}\,\,\overline{u(\tau_1,\sigma_2)}(x_1X_1\ot x_2X_2)\ot x_3X_3=\\=\overline{u(\tau_1,\sigma_2+\sigma_3)}\,\,\overline{u(\tau_2,\sigma_3)}x_1X_1\ot(x_2X_2\ot x_3X_3)=\\=\overline{u(\tau_1,\sigma_2+\sigma_3)}x_1X_1\ot\left((x_2\ot x_3)(X_2\ot X_3)\right)=\\=(x_1\ot(x_2\ot x_3))\cdot_R(X_1\ot(X_2\ot X_3))\,.
\end{multline*}
By extending the equality to all the finite linear combinations of tensor products of homogeneous elements, it straightforwardly results that
\[
(\gb_o\ot\gc_o)\ot\gd_o\sim\gb_o\ot(\gc_o\ot\gd_o)
\]
as involutive algebras, and hence $\overline{(\gb_o\ot\gc_o)\ot\gd_o}^{\min}\sim\overline{\gb_o\ot(\gc_o\ot\gd_o)}^{\min}$. Since the min-norm is cross (Proposition 12.7 in \cite{FV}), by Proposition 12.5 in \cite{FV} $\overline{(\gb_o\ot\gc_o)\ot\gd_o}^{\min}$ contains isomorphic copies of the completion of its marginals pre-$C^*$-algebras i.e. \[
\gb\ot\gc,\gd\hookrightarrow\overline{(\gb_o\ot\gc_o)\ot\gd_o}^{\min}
\]
Analogously, $\gb,\gc\ot\gd\hookrightarrow\overline{\gb_o\ot(\gc_o\ot\gd_o)}^{\min}$. In conclusion,
\[
(\gb\ot\gc)\ot\gd\sim\gb\ot(\gc\ot\gd)\,.\qedhere
\]
\end{proof}
Lemma \ref{associativity} allows us to write $(\gb\ot\gc)\ot\gd\sim\gb\ot(\gc\ot\gd)$ simply as $\gb\ot\gc\ot\gd$ with no problems of ambiguity. In particular, we will be able to construct the infinite twisted $C^*$-tensor product without concerning for where to put brackets. This allows us to provide a natural action of the finitary symmetric group $\bp$ associated to the exchange of the indices in the chain, without any trouble associated to parenthesisation, see the next two sections.

\section{The infinite twisted $C^*$-tensor product}
\label{sevnif}

The present section is devoted to the construction of the $C^*$-inductive limit of a system of twisted minimal $C^*$-tensor products based on a fixed $C^*$-dynamical system $(\gb,G,\b)$, where $G$ is a compact abelian group acting on the unital $C^*$-algebra $\gb$. The first step of our iterative construction will be the $C^*$-dynamical system $(\gb\ot\gb,G,\delta^{(\beta)})$ where $\gb\ot\gb$ is the completion of the twisted tensor product $\gb_o\ot\gb_o$ w.r.t the min-norm and 
$$
\delta^{(\beta)}_g(x):=(\b_g\ot\b_g)(x)\,,\quad x\in\gb\ot\gb,\,\,g\in G,
$$
is the diagonal action of $G$ on $\gb\ot\gb$. Dealing with minimal $C^*$-tensor completions only, from now on we will always omit all script ``min".

We start by defining a direct system of $C^*$-algebras based on the single $C^*$-dynamical system $(\gb,G,\b)$ as above by using the $*$-monomorphism
\[
\iota_\ga:\ga\hookrightarrow\ga\ot\gb
\]
of Proposition 12.5 in \cite{FV}. Indeed, define
\begin{align*}
&\ga_{0}:=\gb\,,\quad \ga_{n+1}:=\ga_{n}\ot\gb\,,\\
&\iota_n:\ga_{n}\hookrightarrow\ga_{n+1}\,,\quad n\in\bn\,.
\end{align*}
In addition, we put
$$
\a^{(0)}_g:=\b_g\in\mathsf{Aut}(\ga_0), \,\,\a^{(n+1)}_g:=\a^{(n)}_g\ot\b_g\in\mathsf{Aut}(\ga_{n+1}),\quad g\in G,\,\,n\in\bn
$$
and, for a sequence $(\f_n)_n\subset\cs_G(\gb)$
$$
\om_0:=\f_0,\,\,\om_{n+1}:=\om_n\times\f_{n+1}\in,\,\,n\in\bn.
$$
Observe that, for every $n\in\bn$, $\om_{n}$ belongs to $\cs(\ga_n)$ (and in particular to $\cs_G(\ga_n)$) thanks to Proposition 9.1 in \cite{FV}. Furthermore, the automorphisms $\left(\alpha_g^{(n)}\right)_{\substack{n\in\bn\\g\in G}}$ and the $G$-invariant states $\left(\omega_n\right)_{n\in\bn}$ evidently satisfy the compatibility relations
\begin{equation}\label{rel}
\a_g^{(n+1)}\circ\iota_n=\iota_{n}\circ\a_g^{(n)},\,\,\om_{n+1}\circ\iota_n=\om_n,\quad n\in\bn,\,\,g\in G\,.
\end{equation}
Setting $\F_{ij}:=\iota_{i-1}\circ\cdots\circ\iota_{j+1}\circ\iota_j$, it is immediate to verify that $(\ga_j, \F_{ij})_{\{i,j\in\bn\,;\,i>j\}}$ is a direct system of $C^*$-algebras, thus yielding the {\it direct limit}
$$
\ga_\infty:=\stackrel[\longrightarrow\,n]{}{\lim}\ga_n\,.
$$
We also notice that $\alpha^{(n)}\colon g\mapsto\alpha^{(n)}_g$ defines a pointwise norm-continuous action of $G$ on $\ga_n$ for each $n\in\bn$. By (\ref{rel}), the sequence of actions $\Big(G\stackrel{\a^{(n)}}{\curvearrowright}\ga_n\Big)_n$, and the product states $(\om_n)_n$ are compatible with the above direct system, thus providing an action $\big(G\stackrel{\a^{(\infty)}}{\curvearrowright}\ga_\infty\big)_n$ and a positive unital functional $\om_\infty$ on $\ga_\infty$, see e.g. Appendix L1 in \cite{WO}. Therefore, by Appendix L2 in \cite{WO}, we can firstly define the $C^*$-inductive limit $\ga:=\overline{\ga_\infty}$ associated to the above direct system $(\ga_j, \F_{ij})_{\{i,j\in\bn\,;\,i>j\}}$ as the completion of its direct limit $\ga_\infty$, and secondly an action $G\stackrel{\a}{\curvearrowright}\ga$ and an infinite product state $\om=\f_0\times\f_1\cdots\in\cs(\ga)$ which is also invariant for the inductive limit action $\a$ of $G$ on $\ga$, i.e. $\omega\in\cs_G(\ga)$. We collect these results in the following
\begin{thm}
The inductive sequence $(\ga_n)_n$ of $C^*$-algebras, together with the compatible sequence of actions $\Big(G\stackrel{\a^{(n)}}{\curvearrowright}\ga_n\Big)_n$, determines a $C^*$-dynamical system $(\ga,G,\a)$, referred to as the (minimal) $C^*$-inductive limit of a countably infinite number of copies of a single $C^*$-dynamical system $(\gb,G,\b)$. In addition, any sequence of invariant states $(\f_n)_n\subset\cs_G(\gb)$ determines an invariant state $\om\in\cs_G(\ga)$, the infinite product state, through its compatible sequence of partial product states $(\om_n)_n$.
\end{thm}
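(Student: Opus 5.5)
The plan is to assemble the statement from three ingredients: the results of \cite{FV} recalled above, the compatibility relations \eqref{rel}, and the standard inductive-limit facts in Appendices L1--L2 of \cite{WO}. I proceed by induction on $n$, keeping two properties at each step: $(\ga_n,G,\a^{(n)})$ is a $C^*$-dynamical system, and $\om_n\in\cs_G(\ga_n)$.

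\textbf{Induction.} For $n=0$ both properties hold by hypothesis. Assume they hold at level $n$. Then $\ga_{n+1}=\ga_n\ot\gb$ is well defined; by Lemma \ref{associativity} it does not depend on the bracketing. I take $\a^{(n+1)}_g:=\a^{(n)}_g\ot\b_g$ to be the diagonal action. On $(\ga_n)_o\odot\gb_o$ each $\a^{(n+1)}_g$ is a $*$-automorphism, since it preserves degrees and hence commutes with the twisted product and involution. It is isometric for the min-norm, because $\cs_G$ is stable under composition with $\a_g$ and under the same for $\b_g$, so the supremum defining $\|\cdot\|_{\min}$ is unchanged. It therefore extends to $\ga_{n+1}$.

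\textbf{Continuity.} Pointwise norm continuity of $g\mapsto\a^{(n+1)}_g$ follows by a $3\varepsilon$-argument. On elementary tensors it comes from the cross property $\|a\ot b\|=\|a\|\|b\|$ (Proposition 12.7 of \cite{FV}), which gives $\|\a_g(a)\ot\b_g(b)-a\ot b\|\le\|\a_g(a)-a\|\|b\|+\|a\|\|\b_g(b)-b\|$. It then passes to the whole algebra by density and the uniform bound $\|\a^{(n+1)}_g\|=1$.

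\textbf{States and compatibility.} The product state $\om_{n+1}=\om_n\times\f_{n+1}$ lies in $\cs_G(\ga_{n+1})$ by Proposition 9.1 of \cite{FV}. The maps $\iota_n$ of Proposition 12.5 of \cite{FV} are $*$-monomorphisms $a\mapsto a\ot 1$; the unit has degree $0$, so no twisting phase appears. From this the relations \eqref{rel} are immediate: $\a^{(n+1)}_g(a\ot1)=\a^{(n)}_g(a)\ot1$ and $(\om_n\times\f_{n+1})(a\ot1)=\om_n(a)$.

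\textbf{Passage to the limit.} The $\a^{(n)}_g$ and $\om_n$ are compatible with the connecting maps, so they define an automorphism $\a^{(\infty)}_g$ and a state $\om_\infty$ on the algebraic direct limit $\ga_\infty=\bigcup_n\ga_n$. Both are isometric or contractive, so they extend to the completion $\ga$, giving $\a_g\in\mathsf{Aut}(\ga)$ and $\om\in\cs(\ga)$. The group law $\a_{gh}=\a_g\a_h$ holds on each $\ga_n$, hence on $\ga$. The $G$-invariance $\om\circ\a_g=\om$ holds on each $\ga_n$ and extends by density and continuity.

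\textbf{Main obstacle.} The one delicate point is strong continuity of $\a$ on $\ga$. Fix $x\in\ga$ and $\varepsilon>0$. Choose $n$ and $y\in\ga_n$ with $\|x-y\|<\varepsilon$. Then $\|\a_g(x)-x\|\le2\varepsilon+\|\a^{(n)}_g(y)-y\|$, and the last term is small for $g$ near $1$ by the inductive step. This uses only that all the maps are uniformly bounded by $1$, so I expect no real difficulty beyond careful bookkeeping.
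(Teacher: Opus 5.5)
Your proposal is correct and follows the paper's route. You define $\ga_{n+1}=\ga_n\ot\gb$ recursively with diagonal actions and partial product states (invariance via Proposition 9.1 of \cite{FV}), check the compatibility relations \eqref{rel}, and extend $\a$ and $\om$ to the direct limit and its completion as in Appendices L1--L2 of \cite{WO}; you additionally prove the min-isometry of $\a^{(n)}_g$ and the $3\varepsilon$ strong continuity of the limit action, which the paper only asserts.
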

It is not hard to see that to obtain a product state on the whole inductive limit $\ga$, it is enough to start with a sequence $(\f_n)_n\subset\cs(\gb)$ of states which are all invariant but at most one, see \cite{AM1}.

For any $n\in\bn$ and $C_n:=\frac{1}{n+1}\binom{2n}{n}$ the Catalan numbers, there are exactly $C_n$ ways of inserting brackets in a chain of $n+1$ copies of $\gb$ in order to associate the products $\ot$ among them. For example, for $n=3$, there are $C_3=5$ ways to parenthesise a chain of $4$ copies of $\gb$:
\begin{multline*}
(\gb\ot\gb)\ot(\gb\ot\gb),\,((\gb\ot\gb)\ot\gb)\ot\gb=\ga_3,\\(\gb\ot(\gb\ot\gb))\ot\gb,\,\gb\ot((\gb\ot\gb)\ot\gb),\,\gb\ot(\gb\ot(\gb\ot\gb))\,.
\end{multline*}
In view of Lemma \ref{associativity}, all the five $C^*$-algebras above are isomorphic. More in general, for every $n\in\bn$, all the $C_n$ possible parenthesisations are isomorphic $C^*$-algebras. The previous considerations allows us to simply write $\ga_n=\underbrace{\gb\ot\dots\ot\gb}_{(n+1)\text{-copies}}$. Analogously, $\ga_\infty=\stackrel[\longrightarrow\,n]{}{\lim}\underbrace{\gb\ot\dots\ot\gb}_{(n+1)\text{-copies}}$. It is then meaningful to write $\underset{n\in\bn}{\ot}\gb:=\overline{\ga_\infty}=\ga$, exactly as for the usual tensor product and the Fermi one, see e.g. \cite{St2, Ffe}.

\section{The action of the finitary symmetric group on the infinite twisted tensor product}
\label{opytg}

From now on, we will deal with nondegenerate \emph{hermitian} bicharacters $u\in\mathsf{Bic}(\widehat{G})$ only, i.e.
$$
\widehat{G}\times\widehat{G}\ni(\s,\t)\mapsto u(\s,\t)=\overline{u(\t,\s)}\in\bt\,.
$$

The reason of this assumption is that in such a case the flip $\F$ of Proposition \ref{flip1} realises an involutive $*$-automorphism of $\gb\ot\gb$ which commutes with the product action of $G\times G$, and \emph{a fortiori} with the diagonal action of $G$. Analogously, for every discrete segment $\mathbf{n}:=[0,n]=\{0,1,\dots,n\}$, $n\in\bn$, let $\pi_i:=(i\,\,i+1)\in\bp_\mathbf{n}$ $(i=0,\dots,n-1)$ be an adjacent transposition of $\mathbf{n}$. Then, the $\bc$-linear extension of the map
\[
\pi_i(b_0\ot\cdots\ot b_{i}\ot b_{i+1}\ot\dots\ot b_n):=u(b_i,b_{i+1})\,b_0\ot\cdots\ot b_{i+1}\ot b_i\ot\dots\ot b_n\,,
\]
defined on the elementary tensor products of homogeneous elements, isometrically extends to an element of $\mathsf{Aut}(\ga_n)$. This is just a particular case of a more general fact, as the following proposition shows. As a premise, fix $\rho\in\bp_\mathbf{n}$ and let $\ci_\rho:=\{(l,k)\in\mathbf{n}\times\mathbf{n}\colon l<k,\rho(l)>\rho(k)\}$ be the set of \emph{inversions of $\rho$}. Its cardinality is $\mathsf{inv}(\rho):=|\ci_\rho|\in\left\{0,\dots,\binom{n+1}{2}\right\}$. Also notice that if $\Sigma\colon\mathbf{n}\times\mathbf{n}\to\mathbf{n}\times\mathbf{n}$ is the switch bijection, $\ci_{\rho^{-1}}=(\rho\times\rho)\circ\Sigma(\ci_\rho)$. We then have the following
\begin{lem}
Each $\rho\in\bp_{\mathbf{n}}$ induces an element of $\mathsf{Aut}(\ga_n)$. Moreover, for every homogeneous $b_l\in\gb_o$,
\begin{equation}\label{permj}
\rho(b_0\ot\cdots\ot b_n)=\Big(\prod\limits_{(l,k)\in\ci_{\rho^{-1}}}u(b_l,b_k)\Big)\,b_{\rho(0)}\ot\dots\ot b_{\rho(n)}\,.
\end{equation}
\end{lem}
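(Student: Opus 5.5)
The plan is to first realise each adjacent transposition $\pi_i$ as a $*$-automorphism of $\ga_n$, and then compose these along a reduced word for $\rho$, keeping track of the scalar factors.

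\textbf{Adjacent transpositions.} By Lemma \ref{associativity} we may bracket $\ga_n$ as
$$
\ga_{i-1}\ot(\gb\ot\gb)\ot\gb^{\ot(n-i-1)}.
$$
Since $u$ is hermitian, Proposition \ref{flip1} gives the flip $\F$ as a $*$-automorphism of $\gb\ot\gb$. It is given by $\F(b\ot b')=u(\partial b,\partial b')\,b'\ot b$, because $\overline{u(b',b)}=u(b,b')$. Moreover, $\F$ commutes with the diagonal $G$-action, so it preserves degrees: $\partial(b'\ot b)=\partial b+\partial b'$.

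Now consider the map $\mathrm{id}\ot\F\ot\mathrm{id}$ on the algebraic part $\gb_o\ot\cdots\ot\gb_o$. A direct check on homogeneous elementary tensors shows that it respects the twisted product and involution. The reason is that every structure constant involving the $i$-th and $(i+1)$-th slots depends only on the total degree $\partial b_i+\partial b_{i+1}$, which $\F$ preserves, together with the $\F$-internal factor, which $\F$ handles.

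Isometry for the min-norm is obtained in the same way as in the proof of Proposition \ref{flip1}. We conjugate the product-state representations $\pi_{\om_0\times\cdots\times\om_n}$ by the unitary swapping $\ch_{\om_i}$ and $\ch_{\om_{i+1}}$. The $i$-th factor is left unchanged, and this unitary intertwines the representation with the one for the permuted product state. Taking the supremum over product states gives $\|\pi_i(x)\|_{\min}=\|x\|_{\min}$, so $\pi_i$ extends to $\ga_n$. Finally $\pi_i^2=\mathrm{id}$, because $u(\s,\t)u(\t,\s)=1$, so each $\pi_i$ is an automorphism.

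\textbf{Defining $\rho$.} Write $\rho=\pi_{i_1}\cdots\pi_{i_r}$ as a reduced word, with $r=\mathsf{inv}(\rho)$, and set $\rho:=\pi_{i_1}\circ\cdots\circ\pi_{i_r}$. A priori this could depend on the chosen word. To show it does not, I would check the Coxeter relations directly on homogeneous elementary tensors:
\begin{itemize}
\item $\pi_i^2=\mathrm{id}$;
\item $\pi_i\pi_j=\pi_j\pi_i$ for $|i-j|\ge2$, which is immediate since the factors sit on disjoint slots;
\item $\pi_i\pi_{i+1}\pi_i=\pi_{i+1}\pi_i\pi_{i+1}$, since both sides reverse three entries and produce the same scalar $u(b_i,b_{i+1})u(b_i,b_{i+2})u(b_{i+1},b_{i+2})$.
\end{itemize}
By the Coxeter presentation of $\bp_{\mathbf n}$, this yields a homomorphism $\bp_{\mathbf n}\to\mathsf{Aut}(\ga_n)$. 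Alternatively, the formula \eqref{permj} itself defines the map on the algebraic part and gives word-independence for free.

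\textbf{Proving \eqref{permj}.} I would argue by induction on $\mathsf{inv}(\rho)$. Let $\rho'=\rho\pi_i$ with $\mathsf{inv}(\rho')=\mathsf{inv}(\rho)+1$; this means $\rho(i)<\rho(i+1)$. Applying $\pi_i$ to $b_{\rho(0)}\ot\cdots\ot b_{\rho(n)}$ swaps the entries in positions $i,i+1$ and contributes the factor $u(b_{\rho(i)},b_{\rho(i+1)})$. On the other side, $\ci_{\rho'^{-1}}=\ci_{\rho^{-1}}\cup\{(\rho(i),\rho(i+1))\}$. This follows from the relation $\ci_{\rho^{-1}}=(\rho\times\rho)\Sigma(\ci_\rho)$ and the standard fact $\ci_{\rho'}=\pi_i$-image of $\ci_\rho$ plus $\{(i,i+1)\}$.

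I expect the main obstacle to be the bookkeeping in this inductive step. Two points need care:
\begin{itemize}
\item matching the order convention of composition, since $\rho$ acts on positions while indices label the entries;
\item confirming that the new pair $(l,k)=(\rho(i),\rho(i+1))$ has $l<k$, so that the factor appears as $u(b_l,b_k)$ and not its conjugate.
\end{itemize}
If this convention check fails, I would switch to writing $\rho'=\pi_i\rho$ instead; one of the two conventions must match \eqref{permj}.
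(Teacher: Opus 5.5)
Your core computation is the paper's: the inductive identity $\ci_{(\rho\pi_i)^{-1}}=\ci_{\rho^{-1}}\sqcup\{(\rho(i),\rho(i+1))\}$ for $\rho(i)<\rho(i+1)$ is exactly what the paper checks. However, the convention problem you flagged is a genuine failure, and your proposed remedy does not fix it.

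\textbf{The convention mismatch.} Read the word as a composition of maps, right to left (the paper's convention). Then your definition $\rho=\pi_{i_1}\cdots\pi_{i_r}\mapsto\pi_{i_1}\circ\cdots\circ\pi_{i_r}$ is a homomorphism. It sends $b_0\ot\cdots\ot b_n$ to a multiple of $b_{\rho^{-1}(0)}\ot\cdots\ot b_{\rho^{-1}(n)}$, not of $b_{\rho(0)}\ot\cdots\ot b_{\rho(n)}$. For example, take $n=2$ and $\rho=\pi_1\pi_0$, so $(\rho(0),\rho(1),\rho(2))=(2,0,1)$. Then $\pi_1\circ\pi_0$ maps $b_0\ot b_1\ot b_2$ to $u(b_0,b_1)u(b_0,b_2)\,b_1\ot b_2\ot b_0$, whereas \eqref{permj} predicts a multiple of $b_2\ot b_0\ot b_1$.

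Your inductive step silently uses $\gamma_{\rho\pi_i}=\pi_i\circ\gamma_\rho$, which contradicts the homomorphism property $\gamma_{\rho\pi_i}=\gamma_\rho\circ\pi_i$. Switching to $\rho'=\pi_i\rho$ does not help either. In that case $\pi_i\circ\gamma_\rho$ puts $b_{\rho\pi_i(j)}$ in slot $j$, while \eqref{permj} for $\pi_i\rho$ requires $b_{\pi_i\rho(j)}$. So with your definition the formula is false, and no choice of side in the induction can prove it.

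\textbf{The fix.} Attach to $\rho$ the reversed composite $\pi_{i_r}\circ\cdots\circ\pi_{i_1}$. The assignment satisfying \eqref{permj} is an anti-homomorphism; this is what the paper means by saying that a composition of permutations ``acts on the indices by reversing the composition''.

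Equivalently, use your own alternative. Define $T_\rho$ on the algebraic part by \eqref{permj}, and use your computation $\pi_i\circ T_\rho=T_{\rho\pi_i}$ to see that every $T_\rho$ is a composite of flips, hence extends to an automorphism. Word-independence is then automatic, and the Coxeter check is not needed for the lemma. This is how the paper proceeds. The only difference is that it inducts over arbitrary, not necessarily reduced, words. It therefore also treats the case $\rho(i)>\rho(i+1)$, where $(\rho(i+1),\rho(i))$ leaves the inversion set and hermiticity, $u(a,b)u(b,a)=1$, closes the identity.

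\textbf{A smaller point on isometry.} The min-norm on $\ga_n$ is defined iteratively, through pairs in $\cs_G(\ga_{n-1})\times\cs_G(\gb)$. It is not defined as a supremum over full product states $\om_0\times\cdots\times\om_n$ of states of $\gb$. So ``taking the supremum over product states'' needs extra justification. With your bracketing, a functorial argument is simpler. A $G$-equivariant $*$-automorphism $\theta$ of one factor pulls back $\om\times\f$ to $\om\times(\f\circ\theta)$, respectively to $(\om\circ\theta)\times\f$. Since $\f\circ\theta$ and $\om\circ\theta$ are again invariant, both $\mathrm{id}\ot\theta$ and $\theta\ot\mathrm{id}$ are min-isometric. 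Apply this first to $\F$ on $\gb\ot\gb$, and then to the resulting equivariant automorphism of $\ga_{i-1}\ot(\gb\ot\gb)$.
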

\begin{proof}
Since the $n$ adjacent transpositions of $\mathbf{n}$ form a generating set of the group $\bp_\mathbf{n}\sim S_{n+1}$, by the previous discussion, each $\rho\in\bp_{\mathbf{n}}$ induces an element of $\mathsf{Aut}(\ga_n)$. As concerns the formula, it suffices to prove it for any product $\rho$ of $N$ adjacent transpositions, with $N\geq1$. We shall do it by induction on $N$. For $N=1$, i.e. $\rho=(i\,\,i+1)$ for some $i=0,\dots,n-1$, $\ci_{\rho^{-1}}=\ci_\rho=\{(i,i+1)\}$ and the formula reduces to the one exposed above. Let us suppose that the result holds for some $N\geq1$ and prove it for $N+1$. If $\rho$ is a product of $N$ adjacent transpositions, then for every $i=0,\dots,n-1$, by inductive hypothesis we have
\begin{multline*}
((i\,\,i+1)\circ\rho)(b_0\ot\cdots\ot b_n)=\\=\prod\limits_{(l,k)\in\ci_{\rho^{-1}}}u(b_l,b_k)\,u(b_{\rho(i)},b_{\rho(i+1)})(b_{\rho(0)}\ot\cdots\ot b_{\rho(i+1)}\ot b_{\rho(i)}\ot\dots\ot b_{\rho(n)})\,.
\end{multline*}
Here, we are using the standard convention of reading the composition of cycles \emph{from right to left}, as for general functions. We are done if we prove the following equality:
\[
u(b_{\rho(i)},b_{\rho(i+1)})\prod\limits_{(l,k)\in\ci_{\rho^{-1}}}u(b_l,b_k)=\prod\limits_{(x,y)\in\ci_{(\rho\circ(i\,\,i+1))^{-1}}}u(b_x,b_y)
\]
or, equivalently,
\begin{equation}\label{eq}
u(b_{\rho(i)},b_{\rho(i+1)})\prod\limits_{(l,k)\in\ci_{\rho^{-1}}}u(b_l,b_k)\prod\limits_{(x,y)\in\ci_{(i\,\,i+1)\circ\rho^{-1}}}\overline{u(b_x,b_y)}=1
\end{equation}
(recall that a composition of two permutations on an elementary tensor product acts on the indices by reversing the composition).

Firstly, suppose $\rho(i)<\rho(i+1)$. Then, $(x,y)\in\ci_{(i\,\,i+1)\circ\rho^{-1}}$ if and only if either $(x,y)=(\rho(i),\rho(i+1))$ or $(x,y)\in\ci_{\rho^{-1}}$ and satisfies one of the following seven cases:
\begin{enumerate}
\item $\rho^{-1}(y)<\rho^{-1}(x)<i$
\item $\rho^{-1}(y)<i<i+1=\rho^{-1}(x)$
\item $\rho^{-1}(y)<i<i+1<\rho^{-1}(x)$
\item $\rho^{-1}(y)=i<i+1<\rho^{-1}(x)$
\item $\rho^{-1}(y)=i+1<\rho^{-1}(x)$
\item $i+1<\rho^{-1}(y)<\rho^{-1}(x)$
\item $\rho^{-1}(y)<i=\rho^{-1}(x)$
\end{enumerate}
In other words, $\ci_{(i\,\,i+1)\circ\rho^{-1}}=\ci_{\rho^{-1}}\,\sqcup\,\{(\rho(i),\rho(i+1))\}$ and (\ref{eq}) follows. Instead, if $\rho(i+1)<\rho(i)$, then $\ci_{\rho^{-1}}=\ci_{(i\,\,i+1)\circ\rho^{-1}}\,\sqcup\,\{(\rho(i+1),\rho(i))\}$ and again (\ref{eq}) is satisfied.
\end{proof}
For each $n\in\bn$, if $\j_n\colon\ga_n\hookrightarrow\ga$ is the canonical embedding of $\ga_n$ into $\ga$ and $\rho\in\bp_\mathbf{n}$, $\j_n\circ\rho\colon\ga_n\to\ga$ extends to a well defined $*$-automorphism of $\ga$ by universal property of the $C^*$-inductive limit. Even more: there exists a unique group representation on $\ga$ of the finitary symmetric group $\bp_\bn$. Indeed, for each $i\in\bn$, let $\Phi_i\in\mathsf{Aut}(\ga)$ be the isometric extension to $\ga$ of the flip $\pi_{i}:=(i\,\,i+1)\in\mathsf{Aut}(\ga_{i+1})$. In particular, $\Phi_i$ is involutive. Firstly, there exists a unique representation of the free (non-abelian) group $\bbf_\bn$ over $\bn$
\[
\begin{aligned}
\Pi\colon\bbf_\bn&\to\mathsf{Aut}(\ga)\\
w&\mapsto\F_{i_1}\circ\cdots\circ\F_{i_n}
\end{aligned}
\]
where $i_1\cdots i_n$ is the (unique) reduced form of the word $w\in\bbf_\bn$. Since the $\Phi_i$'s are involutive, this group representation is not faithful. Secondly, we have the following
\begin{thm}\label{action}
Let $\rho\in\bp_{\bn}$ expressed (not uniquely) as a finite product of adjacent transpositions $\rho=\pi_{i_1}\pi_{i_2}\cdots\pi_{i_n}$, ($i_k\in\bn$, $k=1,\dots,n$). Then, the assignment $\rho\mapsto\g_\rho:=\F_{i_1}\circ\cdots\circ\F_{i_n}$ realises a well-defined, pointwise norm-continuous action $\bp_\bn\stackrel{\g}{\curvearrowright}\ga$. Moreover, $\gamma$ commutes with the diagonal action $\a$ of $G$ on $\ga$: $\a\circ\g=\g\circ\a$. The action $\gamma$ is faithful provided that $\gb\neq\bc$.
\end{thm}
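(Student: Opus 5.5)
Well-definedness is the core of the argument. I will show that $\g_\rho$ does not depend on the chosen decomposition of $\rho$ into adjacent transpositions. The plan is to compare the action of $\F_{i_1}\circ\cdots\circ\F_{i_n}$ with the automorphism of $\ga_N$ furnished by the preceding Lemma, for $N$ large enough that all $i_k+1\le N$. The Lemma shows that each $\rho\in\bp_{\mathbf N}$ induces an element of $\mathsf{Aut}(\ga_N)$. On elementary tensors of homogeneous elements, that element is given by formula \eqref{permj}, whose right-hand side depends only on $\rho$ (through $\ci_{\rho^{-1}}$ and the reordering $b_{\rho(0)}\ot\cdots\ot b_{\rho(N)}$), not on the word. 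The induction in that Lemma shows precisely that any word $\pi_{i_1}\cdots\pi_{i_n}$ evaluates, as a composition of flips, to the right-hand side of \eqref{permj}. Consequently two words representing the same $\rho$ give automorphisms of $\ga_N$ that agree on the dense involutive subalgebra spanned by homogeneous elementary tensors, hence everywhere by continuity.

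I then pass to $\ga$. I will check that $\F_i\circ\j_N=\j_N\circ\pi_i$ for $i<N$, and that $\j_{N+1}\circ\iota_N=\j_N$, where $\pi_i$ on $\ga_{N+1}$ restricted to $\iota_N(\ga_N)$ coincides with $\pi_i$ on $\ga_N$. This last compatibility is clear on homogeneous elementary tensors, since the extra factor is the unit $1\in E_0(\gb)$ and $u(\cdot,0)=1$. Hence $\g_\rho$ is determined on each $\j_N(\ga_N)$ independently of the word, thus on the dense union $\bigcup_N\j_N(\ga_N)$, and hence on $\ga$ by isometry. With well-definedness in hand, multiplicativity $\g_{\rho\rho'}=\g_\rho\circ\g_{\rho'}$ is immediate by concatenating words, and $\g_{\rm id}={\rm id}$. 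Pointwise norm continuity is automatic because $\bp_\bn$ is discrete.

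Commutation with $\a$ is checked generator by generator. By Proposition \ref{flip1}, with $u$ hermitian so that $v=u$, each flip intertwines the product action of $G\times G$ on $\gb\ot\gb$. On $\ga_{i+1}$, I will verify directly on homogeneous tensors that $\pi_i\circ\a^{(i+1)}_g=\a^{(i+1)}_g\circ\pi_i$. This holds because $\a_g$ multiplies $b_0\ot\cdots\ot b_n$ by $\prod_k\chi_{\partial b_k}(g)$, a product that is symmetric in the factors, while the flip only reorders the factors and multiplies by a scalar. The identity extends to $\ga$ by density and continuity, and then to all $\g_\rho$.

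Faithfulness is the step needing an explicit witness. Given $\rho\neq{\rm id}$, I pick $l$ with $\rho(l)=k\neq l$. If $\gb\neq\bc$, I choose a homogeneous $b\in\gb_o$ not a scalar multiple of $1$; such a $b$ exists because $\gb_o$ is dense and spanned by homogeneous elements. Let $x$ be the tensor with $b$ in position $l$ and $1$ elsewhere. By \eqref{permj}, $\g_\rho(x)$ equals, up to a unimodular factor (products of $u(b,1)=1$), the tensor with $b$ in position $k$. These two elements differ, as one sees by applying a product state $\f_0\times\cdots$ with suitable invariant states, or via the conditional-expectation slice maps onto the factor copies. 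I expect this separation argument, namely showing that $b$ located in different slots gives distinct elements of $\ga$, to be the delicate point. I would handle it by using that the cross minimal norm embeds each factor copy injectively, with distinct positions giving distinct subalgebras that intersect only in the scalars.
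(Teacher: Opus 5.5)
Your proof is correct, but the central step, well-definedness, takes a different route from the paper's.

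\textbf{Well-definedness.} The paper uses the Coxeter presentation of $\bp_\bn$ and checks that the flips satisfy its defining relations:
\begin{itemize}
\item $\F_i^2={\rm id}$, which is where hermiticity enters;
\item $\F_i\circ\F_{i+k}=\F_{i+k}\circ\F_i$ for $k\geq2$;
\item the braid relation, obtained from the Yang--Baxter identity for $\F_o$ on $\gb_o\ot\gb_o\ot\gb_o$.
\end{itemize}
The representation $\Pi$ of the free group then passes to the quotient.

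You instead use \eqref{permj} as a word-independent normal form. The induction in the Lemma runs over arbitrary words, and its right-hand side depends only on the permutation represented. Hence two words for the same $\rho$ give automorphisms that agree on a dense subalgebra.

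\textbf{What each route buys.} Your route dispenses with the presentation of $\bp_\bn$ and with Yang--Baxter, and it delivers the explicit form of $\g_\rho$ at the same time. The price is reliance on the inversion-set bookkeeping of the Lemma. There hermiticity is hidden in the case $\rho(i)>\rho(i+1)$, where one needs $u(b_k,b_l)u(b_l,b_k)=1$. The paper's route only requires identities among two or three adjacent factors.

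\textbf{Points you make explicit.} You spell out two things the paper passes over quickly. First, each flip must be compatible with the embeddings $\iota_N$ in order to extend it from $\ga_{i+1}$ to $\ga$. Second, in the faithfulness argument $b$ must be chosen non-scalar, not merely $b\neq\idd$.

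\textbf{Separating $b$ in two slots.} Your suggestion to evaluate product states with $G$-invariant factors on $x$ and $\g_\rho(x)$ directly does not help when $\partial b\neq0$, since such states vanish on these elements. The simplest justification is that the two elementary tensors are linearly independent in the algebraic tensor product $\gb_o^{\odot(N+1)}$, because $b$ and $\idd$ are. This algebraic product sits injectively in $\ga_N$, and hence in $\ga$. So the stronger claim that distinct slot subalgebras meet only in the scalars is not needed.
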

\begin{proof}
It is a well-known fact that $\bp_\bn\sim(\bbf_\bn\,|\,R)$ where $R$ is the set of relations in $\bbf_\bn$
$$
\begin{cases}
i_n^2=1\\
i_ni_{n+k}=i_{n+k}i_n\,\,(k\geq2)\\
i_ni_{n+1}i_n=i_{n+1}i_ni_{n+1}
\end{cases}
$$
for every $n\in\bn$. Since the normal subgroup $N\subset\bbf_\bn$ generated by the relations above lies in $\ker(\Pi)$, then $\Pi$ passes to the quotient modulo $N$, yielding a well-defined representation $\widetilde{\Pi}$ of $\bp_{\bn}$ on $\ga$. It is easy to see that $\widetilde{\Pi}=\gamma$. Indeed, we already observed that the $\F_i$ are involutive and, easily, $\F_i\circ\F_{i+k}=\F_{i+k}\circ\F_i$ for every $k\geq2$. Lastly, the third relation is guaranteed by the Yang-Baxter equality, satisfied by $\Phi_o$ in Proposition \ref{flip1} on $\gb_o\ot \gb_o\ot\gb_o$
$$
(\F_o\ot\text{id}_{\gb_o})\circ(\text{id}_{\gb_o}\ot\F_o)\circ(\F_o\ot\text{id}_{\gb_o})=(\text{id}_{\gb_o}\ot\F_o)\circ(\F_o\ot\text{id}_{\gb_o})\circ(\text{id}_{\gb_o}\ot\F_o)\,,
$$
then extended to $\gb\ot\gb\ot\gb$. Since $\bp_\bn$ is discrete, $\gamma$ is clearly pointwise norm-continuous. That $\gamma$ commutes with $\alpha$ is easily verifiable on the total set of homogeneous localised elements of $\ga$. If $\gb\neq\bc$, $\gamma$ is also faithful. Indeed, if $\rho\in\bp_\bn$ and $\rho\neq\rm{id}_\bn$, there exists $k\in\bn$ s.t. $\rho(k)\neq k$. Let $b_k\neq\idd_\bn$ and $b_j=\idd_\bn$ for every $j\neq k$. Then, by \eqref{permj},
\[
\gamma_\rho(b_0\ot b_1\ot\dots\ot b_k\ot\dots)\neq b_0\ot b_1\ot\dots\ot b_k\ot\dots
\]
i.e. $\gamma_\rho\neq\rm{id}_\ga$.
\end{proof}

\section{Exchangeable processes and symmetric states}
\label{free}

As explained in \cite{CF2}, a quantum unital stochastic process (based on the index set $\bn$) with sample $C^*$-algebra $\gb$ can be viewed as a state on the unital free product $C^*$-algebra $\bigast_\bn\gb$, intertwining the corresponding symmetries. For example, an exchangeable stochastic process corresponds to a symmetric state, that is one which is invariant under the natural action of the group of all finite permutations of the natural numbers. We refer the previous mentioned papers for details and proofs. Here, we describe in some details the situation corresponding to symmetric states arising from an infinite twisted $C^*$-tensor product.
We start by fixing the notation used in this section. 

Indeed, let $\gb$ be a unital $C^*$-algebra, the so-called {\it algebra of samples} equipped with an action $\b$ of a (abelian as usual) compact group $G$, and a bicharacter $u$ on $\widehat{G}\times\widehat{G}$. Define the unital infinite free product $C^*$-algebra and the infinite twisted $C^*$-tensor product $\ga^{(u)}$ (the latter based, as usual, on the minimal norm) as
$\ga^{\rm free}:=\bigast\!{}_\bn\,\gb\,,\quad \ga^{(u)}:=\ot_\bn\,\gb$, respectively.

Notice that the group of permutations $\bp_\bn$ acts on $\ga^{\rm free}$ through its natural action on the reduced words $\idd_{\ga^{\rm free}}=\emptyset$ (the empty word), and
$w=b_{j_1}\ast\cdots\ast b_{j_n}$ (with different adjacent indices), as
\begin{equation}
\label{epfrfe}
\a_g(\emptyset)=\emptyset\,, \quad \a_g\big(b_{j_1}\ast\cdots\ast b_{j_n}\big)=b_{g(j_1)}\ast\cdots\ast b_{g(j_n)}\,.
\end{equation}
On the other hand, the natural action of $\bp_\bn$ on $\ga^{(u)}$ arising from the exchange of the indices in $\bn$ is well defined if and only if $u$ is hermitian as we have shown above.

By construction, for each $i\in\bn$ there is a natural $*$-monomorphism 
$$
\gb\ni b\mapsto\iota_i(b):=b_i\in\bigast\!{}_\bn\,\gb\,,
$$
where $b_i$ is meant as $b$ suited in the place $i$ in $\bigast\!{}_\bn\,\gb$ with, obviously, $\iota_i(\idd_\gb)=\emptyset$ for each $i$.\footnote{Notice that 
$\gb\sim\bc\idd_\gb\oplus\gb_\#$ as $C^*$-algebra, where $\gb_\#$ is a $C^*$-algebra without the identity.}  By construction, $*\text{-alg}\{\iota_i(\gb)\,;\,i\in\bn\}$ is dense in $\ga^{\text{free}}$.

For $i\in\bn$, let $\s_i:\gb\to\ga^{(u)}$ be a unital $*$-homomorphism. By the universal property of the free product,
there exists a unique unital $*$-homomorphism $\s:\bigast\!{}_\bn\,\gb\to\ga^{(u)}$ making commutative the associated diagrams:

\begin{equation}\label{univw}
\xymatrix{\gb\ar[r]^{\iota_i} \ar[d]_{\s_i} &
\ga^{(\text{free})} \ar[dl]^{\quad \s\,\,,\quad i\,\in\,\bn\,,} \\
\ga^{(u)}}
\end{equation}
that is $\sigma_i=\sigma\circ\iota_i$ for each $i\in\bn$.
Specialising the above analysis to our situation, we take the natural embeddings $\s_i$ of $\gb$ into $\ga^{(u)}$, $i\in\bn$,
$$
\sigma_i\colon b\mapsto\underbrace{\idd\ot\cdots\ot\idd}_{\textrm{$i$-times}}\ot\,b\ot\idd\cdots\,.
$$
Since $*\text{-alg}\{\s_i(\gb)\,;\,i\in\bn\}$ is dense in $\ga^{(u)}$, we get a $*$-epimorphism $\s:\ga^{\rm free}\to\ga^{(u)}$, and thus the twisted algebra is viewed as the quotient
\begin{equation}
\label{epfrfe1}
\ga^{(u)}\sim\ga^{\rm free}/{\rm Ker}(\s)
\end{equation}
of the free one.
We are now in position to define the class of symmetric states associated to every twisted tensor product chain $\ga^{(u)}$, independently on the properties enjoyed by the bicharacter $u$.

We start with the definition of the classes of states, and that of symmetric ones, on the free $C^*$-algebra $\ga^{\rm free}$ coming from those on the twisted tensor product $C^*$-algebra $\ga^{(u)}$ built through the fixed bicharacter $u$.
\begin{defin}
\label{free00}
The weak$^*$-compact, convex set of states $\cs^{(u)}\big(\ga^{\rm free}\big)$ consists of the states in $\cs\big(\ga^{\rm free}\big)$ coming from $\ga^{(u)}$:
$$
\cs^{(u)}\big(\ga^{\rm free}\big):=\s^{\rm t}\big(\cs\big( \ga^{(u)}\big)\big)\,,
$$
where $\s$ is the $*$-epimorphism in \eqref{univw}.

The weak$^*$-compact, convex set of states $\cs_\bp^{(u)}\big(\ga^{\rm free}\big)$ are the states in $\cs^{(u)}\big(\ga^{\rm free}\big)$ which are invariant under the natural action $\a$ of $\bp$ on 
$\ga^{\rm free}$ given by \eqref{epfrfe}.
\end{defin}

To end the present section, notice that $\f\in\cs^{(u)}\big(\ga^{\rm free}\big)$ does not imply that $\f\circ\a_g\in\cs^{(u)}\big(\ga^{\rm free}\big)$ for each $g\in\bp$, unless $u$ is hermitian. If, instead, $u$ is hermitian, then the group of all finite permutations $\bp$ directly acts on $\ga^{(u)}$ and, concerning the transpose then 
$$
\a_g^{\rm t}\big(\cs^{(u)}\big(\ga^{\rm free}\big)\big)=\cs^{(u)}\big(\ga^{\rm free}\big)\,,\quad g\in\bp\,. 
$$
\begin{rem}
\label{stsosot}
Since $\bp$ acts on both algebras $\ga^{\rm free}$ (i.e. \eqref{epfrfe}), and on $\ga^{(u)}$ if $u$ is hermitian (i.e. Section \ref{opytg}),
in this situation it is straightforward to check that we have
$\cs^{(u)}_\bp\big(\ga^{\rm free}\big):=\s^{\rm t}\big(\cs_\bp\big(\ga^{(u)}\big)\big)$.
\end{rem}
Summarising, it is certainly meaningful to investigate the symmetric states in $\cs^{(u)}\big(\ga^{\rm free}\big)$
(i.e. those arising form $\ga^{(u)}$) for every $\ga^{(u)}$. On the other hand, if the involved bicharacter $u$ is hermitian, such symmetric states can be directly studied on the model $\ga^{(u)}$ without invoking the free $C^*$-algebra.

\section{Symmetric states for twisted tensor products and their ergodic properties}
\label{ssttptep}

From now on, we shall consider a $C^*$-dynamical system $(\ga,\bp,\g)$, where 
$\ga:=\underset{n\in\bn}{\ot}\gb=\ga^{(u)}$
for the fixed hermitian bicharacter $u$ on $\widehat{G}\times\widehat{G}$. The finitary symmetric group $\bp$ acts on $\ga$ via $\g$ as described in Theorem \ref{action}. The symmetric states on
the twisted chain $\ga$ are those which are invariant under all automorphisms $\{\g_g\,;\,g\in\bp\}$.
In view of Section \ref{free}, we shall study the basic properties of such symmetric states. For such a purpose, we denote directly by $g(x):=\g_g(x)$, $x\in\ga$ the action of $\g\in\bp$ on $x\in\ga$ through the automorphism $\g_g$. 
In view of the following result, for the commutator and anti-commutator between elements of an algebra, we put
$\{a,b\}_\pm:=ab\pm ba$.
\begin{prop}[Commutation and Anticommutation Relations in $\pi_\omega(\ga)^\bp$]
\label{relations}
Let $\omega\in\cs_{\bp}(\mathfrak{A})$ and $x,y,a,b\in\ga$ homogeneous. Then,
\begin{itemize}
\item[(i)] $\left\{\pi_\omega(x)^{\bp},\pi_\omega(y)^{\bp}\right\}_\pm=\big(1\pm\overline{u(x,y)}\big)\pi_\omega(x)^{\bp}\pi_\omega(y)^{\bp}$,
\item[(ii)] $\cam\big\{\omega(a\{x,\hat g(y)\}_\pm b)\big\}=(1\pm\overline{u(x,y)})\,\overline{u(b,y)}\left\langle\pi_\omega(axb)^{\bp}\pi_\omega(y)^{\bp}\xi_\omega,\xi_\omega\right\rangle$.
\end{itemize}
In particular, if $\pi_\omega(x)^{\bp}$ is non-zero, then $u(x,x)=1$.
\end{prop}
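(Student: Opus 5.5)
The plan is to prove (ii) first by an asymptotic-commutation argument over the Cesàro mean, then obtain (i) from the same computation with invariant vectors in place of $a,b$, and finally get the last assertion from (i) via a positivity trick.

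\emph{Step 1: the basic commutation rule.} For homogeneous elements localised in disjoint finite sets of $\bn$, write the twisted product rule on elementary tensors. If $x$ sits entirely to the left of $y$, the product formula gives $yx=\overline{u(x,y)}\,xy$. If instead $y$ sits to the left of $x$, the same formula gives $xy=\overline{u(y,x)}\,yx=u(x,y)\,yx$, i.e. again $yx=\overline{u(x,y)}\,xy$. Here hermiticity of $u$ is exactly what makes the phase independent of the relative order of the supports. Consequently, for disjointly localised homogeneous elements,
$$
\{x,y\}_\pm=\big(1\pm\overline{u(x,y)}\big)xy\,,\qquad yb=\overline{u(b,y)}\,by\,.
$$
Moreover, by \eqref{permj} each $g\in\bp$ sends a localised homogeneous element to a localised homogeneous element of the same degree, so these rules apply to $g(y)$.

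\emph{Step 2: proof of (ii) for localised elements.} First take $a,x,b,y$ localised in $\mathbf{m}$, $\mathbf{m}$, $\mathbf{m}$, $\mathbf{n}$ respectively. By Lemma \ref{estimation}, as I read it, the proportion of $g\in\bp_{\mathbf N}$ with $\mathbf m\cap g(\mathbf n)\neq\varnothing$ vanishes as $N\to\infty$. The corresponding terms are bounded by $2\|a\|\|x\|\|y\|\|b\|$, so they do not contribute to the Cesàro mean. For the remaining $g$, Step 1 gives
$$
a\{x,g(y)\}_\pm b=\big(1\pm\overline{u(x,y)}\big)\overline{u(b,y)}\,axb\,g(y)\,.
$$
Next, $\om(axb\,g(y))=\langle \pi_\om(axb)V_\om(g)\pi_\om(y)\xi_\om,\xi_\om\rangle$, where $V_\om$ implements $\g$. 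The Cesàro mean of $V_\om(g)$ converges strongly to the projection $E_\om$ on invariant vectors, by the mean ergodic theorem for the amenable group $\bp$ along the Følner sequence $\bp_{\mathbf N}$. Since $E_\om\xi_\om=\xi_\om$, this yields $\langle\pi_\om(axb)^\bp\pi_\om(y)^\bp\xi_\om,\xi_\om\rangle$. General homogeneous elements are handled by approximating them in norm by localised homogeneous ones, e.g. via the projections $E^G_\s$ applied to localised approximants. All expressions involved are uniformly continuous in norm, so the identity passes to the limit.

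\emph{Step 3: proof of (i) and of the last claim.} For (i), compute the matrix elements $\langle E_\om\pi_\om(x)E_\om\pi_\om(y)E_\om\pi_\om(c)\xi_\om,\pi_\om(d)\xi_\om\rangle$ with $c,d$ localised. This can be done in either of two ways: write the inner $E_\om$ as the Cesàro mean of $V_\om(g)$ and use $V_\om(g)\pi_\om(y)V_\om(g)^*=\pi_\om(g(y))$; or apply (ii) directly with $a,b$ suitably chosen. In both cases Step 1 converts $E\pi(y)E\pi(x)E$ into $\overline{u(x,y)}\,E\pi(x)E\pi(y)E$, and density of $\pi_\om(\ga)\xi_\om$ gives (i).

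For the last claim, put $A:=\pi_\om(x)^\bp$, so that $A^*=\pi_\om(x^*)^\bp$, and note $\partial x^*=-\partial x$, whence $\overline{u(x,x^*)}=u(x,x)$. Applying (i) to the pair $(x,x^*)$ gives $[A,A^*]=(1-u(x,x))AA^*$. Applying it to the pair $(x^*,x)$ gives $[A^*,A]=(1-\overline{u(x^*,x)})A^*A=(1-u(x,x))A^*A$. Adding the two identities yields
$$
0=(1-u(x,x))(AA^*+A^*A)\,,
$$
and $AA^*+A^*A\neq0$ whenever $A\neq0$, so $u(x,x)=1$.

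The main obstacle is the bookkeeping in Step 2. One must make sure that the discarded permutations are negligible in the Cesàro mean, which relies on reading Lemma \ref{estimation} as giving a vanishing proportion rather than merely a bounded one. One must also check that the phase acquired by $g(y)$ is exactly $\overline{u(\partial b,\partial y)}$ regardless of the relative positions of the supports, which is where hermiticity enters.
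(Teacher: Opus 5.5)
Your proposal is correct and uses the same ingredients as the paper's proof: the displacement count of Lemma~\ref{estimation}, the twisted rule $g(y)x=\overline{u(x,y)}\,x\,g(y)$ once $g(y)$ is localised to the right of $\mathbf{m}$, the mean ergodic theorem along $\bp_{\mathbf{N}}$, and norm approximation by localised homogeneous elements. The variations are minor: you get (ii) directly by moving $g(y)$ to the far right in both terms, whereas the paper proves (i) first and uses it to reorder $E_\om\pi_\om(y)E_\om\pi_\om(axb)E_\om$; your symmetric argument for $u(x,x)=1$ replaces the paper's single identity $\|E_\om\pi_\om(x)E_\om\xi\|^2=u(x,x)\|E_\om\pi_\om(x^*)E_\om\xi\|^2$; and for (i) you should rely on your first option (expanding the inner $E_\om$ as a Ces\`aro mean of $V_\om(g)$, which is the paper's computation), since (ii) involves only one $E_\om$ tested against $\xi_\om$ and does not by itself give the operator identity for $E_\om\pi_\om(x)E_\om\pi_\om(y)E_\om$.
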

\begin{proof}
(i) We start from homogeneous $x,y\in\ga_o$, respectively localised in the discrete segments $\mathbf{m}:=[0,m]$ and $\mathbf{n}:=[0,n]$, $m,n\in\bn$, and prove it for the anti-commutator. The case involving the commutator is completely similar. 
By the Mean Ergodic Theorem,
\begin{multline*}
\{E_\omega\pi_\omega(x)E_\omega,E_\omega\pi_\omega(y)E_\omega\}=\cam\{E_\omega\pi_\omega(\{x,\hat g(y)\})E_\omega\}\\=\slim\limits_{N\to+\infty}\frac{1}{(N+1)!}\sum\limits_{g\in\bp_\mathbf{N}}E_\omega\pi_\omega(\{x,g(y)\})E_\omega\,.
\end{multline*}
Let $v:=\max\{m,n\}$. For each $N\geq2v+1$, consider the family $\Gamma_{N,v}:=\{g\in\bp_{{\bf N}}\mid{\bf v}\cap\rho({\bf v})=\varnothing\}$ of permutations of $\mathbf{N}:=[0,N]$ which fully displace $\mathbf{v}$. Then,
\begin{multline*}
\sum_{g\in\bp_{\bf N}}E_\om\pi_\om\big(\{x,g(y)\}\big)E_\om\\=\sum_{g\in\G_{N,v}}E_\om\pi_\om\big(\{x,g(y)\}\big)E_\om
+\sum_{g\not\in\G_{N,v}}E_\om\pi_\om\big(\{x,g(y)\}\big)E_\om
\\=\big(1+\overline{u(x,y)}\big)\sum_{g\in\G_{N,v}}E_\om\pi_\om\big(xg(y)\big)E_\om+\sum_{g\not\in\G_{N,v}}E_\om\pi_\om\big(\{x,g(y)\}\big)E_\om\\
=\big(1+\overline{u(x,y)}\big)\sum_{g\in\bp_{\bf N}}E_\om\pi_\om\big(xg(y)\big)E_\om\\
+\sum_{g\not\in\G_{N,v}}\Big(E_\om\pi_\om\big(\{x,g(y)\}\big)E_\om-\big(1+\overline{u(x,y)}\big)E_\om\pi_\om\big(xg(y)\big)E_\om\Big)\\
=\big(1+\overline{u(x,y)}\big)\sum_{g\in\bp_{\bf N}}E_\om\pi_\om\big(xg(y)\big)E_\om+\sum_{g\not\in\G_{N,v}}E_\om\pi_\om\big(g(y)x-\overline{u(x,y)}xg(y)\big)E_\om\,.
\end{multline*}
By Lemma \ref{estimation}, the norm of the second addendum above is $o((N+1)!)$ as $N\geq2v+1$ tends to $+\infty$, hence
\begin{multline*}
\frac{1}{(N+1)!}\left\|\sum_{g\not\in\G_{N,v}}E_\om\pi_\om\big(g(y)x-\overline{u(x,y)}xg(y)\big)E_\om\right\|\\
\leq\frac{1}{(N+1)!}\sum_{g\not\in\G_{N,v}}\left\|g(y)x-\overline{u(x,y)}xg(y)\right\|\\\leq2\|x\|\|y\|\frac{|\G_{N,v}^c|}{(N+1)!}\leq2\|x\|\|y\|\frac{C_v}{N+1}\xrightarrow{N\uparrow+\infty}0\,.
\end{multline*}
Therefore, by passing to the limit as $N$ tends to $+\infty$,
\begin{multline*}
\{E_\omega\pi_\omega(x)E_\omega,E_\omega\pi_\omega(y)E_\omega\}=\big(1+\overline{u(x,y)}\big)\slim\limits_{N\to+\infty}\frac{1}{N!}\sum\limits_{g\in\bp_\mathbf{N}}E_\omega\pi_\omega(xg(y))E_\omega=\\=\big(1+\overline{u(x,y)}\big)\slim\limits_{N\to+\infty}\Big(\frac{1}{N!}\sum\limits_{g\in\bp_\mathbf{N}}E_\omega\pi_\omega(x)U_\omega(g)\pi_\omega(y)E_\omega\Big)=\\=\big(1+\overline{u(x,y)}\big)E_\omega\pi_\omega(x)E_\omega\pi_\omega(y)E_\omega
\end{multline*}
and equality (i) is established for localized homogeneous elements $x,y\in\ga_\infty$. Now, since for every $\sigma\in\widehat{G}$, $\ga_\sigma=\overline{(\ga_\infty)_\sigma}^\ga$, if $x,y\in\ga$ are any pair of homogeneous elements and $\varepsilon\in(0,1)$, there exist $x_\varepsilon,y_\varepsilon\in\ga_\infty$ s.t. $\partial x_\varepsilon=\partial x$, $\partial y_\varepsilon=\partial y$ and $\|x-x_\varepsilon\|_\ga,\|y-y_\varepsilon\|_\ga<\varepsilon$. It follows that
\[
\|E_\omega\pi_\omega(x)E_\omega\pi_\omega(y)E_\omega-E_\omega\pi_\omega(x_\varepsilon)E_\omega\pi_\omega(y_\varepsilon)E_\omega\|<\varepsilon(\|x\|+\|y\|+\varepsilon)
\]
\[
\left\|\{E_\omega\pi_\omega(x)E_\omega,E_\omega\pi_\omega(y)E_\omega\}-\{E_\omega\pi_\omega(x_\varepsilon)E_\omega,E_\omega\pi_\omega(y_\varepsilon)E_\omega\}\right\|<2\varepsilon(\|x\|+\|y\|+\varepsilon)\,.
\]
By arbitrariness of $\varepsilon\in(0,1)$, equality (i) is then established for any homogeneous elements $x,y\in\ga$. A similar proof holds for (ii).

As concerns (ii), firstly observe that for every $a,b,x,y\in\ga$, both the nets
\[
\bigg(\frac{1}{|\bp_S|}\sum\limits_{g\in\bp_S}\om(axbg(y))\bigg),\quad
\bigg(\frac{1}{|\bp_S|}\sum\limits_{g\in\bp_S}\om(g(y)axb)\bigg)
\]
have each a unique cluster point, given by
\[
\cam\big\{\om(axb\hat g(y))\big\}=\langle\pi_\om(axb)E_\om\pi_\om(y)\xi_\om,\xi_\om\rangle\,,
\]
\[
\cam\big\{\om(\hat g(y)axb)\big\}=\langle\pi_\om(y)E_\omega\pi_\om(axb)\xi_\om,\xi_\om\rangle\,.
\]
Now, if $a,b,x,y$ are homogeneous and belonging to $\ga_\infty$, by reasoning as above we get that $\cam\big\{\om(ax\hat g(y)b)\big\}$ exists too, as
\begin{equation}\label{aver1}
\cam\big\{\om(ax\hat g(y)b)\big\}=\overline{u(b,y)}\cam\big\{\om(axb\hat g(y))\big\}\,.
\end{equation}
To extend \eqref{aver1} to every homogeneous element in $\ga$, we observe that for any $\varepsilon\in(0,1)$, there exist $a_\varepsilon,x_\varepsilon,y_\varepsilon,b_\varepsilon\in\ga_\infty$ s.t. $\partial a_\varepsilon=\partial a,\partial x_\varepsilon=\partial x,\partial y_\varepsilon=\partial y,\partial b_\varepsilon=\partial b$ and $|\omega(ax g(y)b)-\omega(a_\varepsilon x_\varepsilon g(y_\varepsilon)b_\varepsilon)|<\varepsilon$.
In particular, 
$$
\bigg|\frac{1}{N!}\sum\limits_{ g\in\bp_{\mathbf{N}}}\omega(ax g(y)b)-\frac{1}{N!}\sum\limits_{ g\in\bp_{\mathbf{N}}}\omega(a_\varepsilon x_\varepsilon g(y_\varepsilon)b_\varepsilon)\bigg|<\varepsilon\,,\,\,\,N\geq1\,,
$$
Since $\bigg(\frac{1}{N!}\sum\limits_{ g\in\bp_{\mathbf{N}}}\omega(a_\varepsilon x_\varepsilon g(y_\varepsilon)b_\varepsilon)\bigg)_N$ converges, $\bigg(\frac{1}{N!}\sum\limits_{ g\in\bp_{\mathbf{N}}}\omega(ax g(y)b)\bigg)_N$ 
is a Cauchy sequence in $\bc$, thus convergent as well.
It means that $\cam\big\{\omega(ax\hat g(y)b)\big\}$ exists and 
$$
\cam\big\{\omega(ax\hat g(y)b)\big\}=\lim\limits_{\varepsilon\downarrow0^+}\cam\big\{\omega(a_\varepsilon x_\varepsilon \hat g(y_\varepsilon)b_\varepsilon)\big\}\,.
$$ 
By analogously approximating $\cam\big\{\omega(axb\hat g(y))\big\}$, we get \eqref{aver1} for any homogeneous elements $a,x,y,b\in\ga$. Similarly,
\begin{equation}\label{aver2}
\cam\big\{\om(a\hat g(y)xb)\big\}=u(a,y)\cam\big\{\om(\hat g(y)axb)\big\}
\end{equation}
By \eqref{aver1} and \eqref{aver2},
\begin{align*}
\cam\big\{\om(ax\hat g(y)b)\big\}=&\overline{u(b,y)}\cam\big\{\om(axb\hat g(y))\big\}\\
=&\overline{u(b,y)}\langle\pi_\om(axb)E_\om\pi_\om(y)\xi_\om,\xi_\om\rangle\\
\cam\big\{\om(a\hat g(y)xb)\big\}=&u(a,y)\cam\big\{\om(\hat g(y)axb)\big\}\\
=&\overline{u(x,y)}\,\overline{u(b,y)}\langle\pi_\om(axb)E_\om\pi_\om(y)\xi_\om,\xi_\om\rangle
\end{align*}
where we used point (i). Therefore, 
\[
\cam\big\{\om(a\{x, \hat g(y)\}_\pm b)\big\}=(1\pm\overline{u(x,y)})\,\overline{u(b,y)}\langle\pi_\om(axb)E_\om\pi_\om(y)\xi_\om,\xi_\om\rangle
\]
that is (ii).

Lastly, If $E_\omega\pi_\omega(x)E_\omega$ is non-zero, there exists $\xi\in\ch_\om$ such that
$\big\|E_\omega\pi_\omega(x)E_\omega\xi\big\|^2>0$. Now,
by exploiting (i) with $y=x^*$ and noticing that $u(x,x)$ is real, we get
$$
0<\big\|E_\omega\pi_\omega(x)E_\omega\xi\big\|^2=u(x,x)\big\|E_\omega\pi_\omega(x^*)E_\omega\xi\big\|^2\,,
$$
which leads to $u(x,x)=1$.
\end{proof}
Thanks to Proposition \ref{relations}, we can give a necessary and sufficient condition for a symmetric state $\omega\in\cs_{\bp}(\ga)$ to be $\bp$-abelian, a crucial property for the upcoming De Finetti Theorem, e.g. \cite{St2, Ffe}. 

For $\omega\in\cs_\bp(\ga)$, let
\[
\mathrm{supp}(\omega):=[\ga_\sigma\,;\,\omega\lceil_{\ga_\sigma}\neq0]=[\ga_\sigma\,;\,\pi_\omega(\ga_\sigma)\xi_\omega\not\perp\xi_\omega]\subset\ga\,.
\]
Since $u$ is hermitian, $u(\sigma,\sigma)\in\{\pm1\}$ for every $\sigma\in\widehat{G}$. Therefore, recall that
\begin{equation}
\label{annis}
\Delta_{\pm}:=\{\sigma\in\widehat{G}\,;\, u(\sigma,\sigma)=\pm1\}\,.
\end{equation}
Evidently, $\Delta_+$ is a subgroup of $\widehat{G}$, called \textit{isotropy subgroup}. Its annihilator is
$$
\Delta_+^\perp:=\{g\in G\,;\,\sigma(g)=1,\,\sigma\in\Delta_+\}\subset G\,.
$$ 
Also, put $\Delta_+^*:=\Delta_+\setminus\{0\}\subset\widehat{G}$. If $\sigma\in\Delta_+^*$, let 
\[
T_\sigma:=\{\tau\in\Delta^*_+\,;\, u(\sigma,\tau)\neq1\}\subset\widehat{G}\,.
\]
Then,
\begin{itemize}
\item $T_\sigma$ is a (possibly, empty) symmetric subset of $\Delta_+^*$, i.e. $T_\sigma=-T_\sigma$
\item $T_\sigma=T_{-\sigma}$,
\item $\tau\in T_\sigma$ if and only if $\sigma\in T_\tau$.
\end{itemize}
In particular, $\pi_\omega\Big(\dotplus_{\tau\in T_\sigma}\ga_{\tau}\Big)^{\bp}$ is a selfadjoint operator space (i.e. an operator system) in $\cb(E_\om\ch_\omega)$. If $T_\sigma=\varnothing$, then $\dotplus_{\tau\in T_\sigma}\ga_{\tau}$ is set to be $\{0\}$.

We are now ready for the investigation of the ergodic properties of $\cs_\bp(\ga)$. It will turn out that every symmetric state satisfies an invariance property.
\begin{thm}[Ergodic properties of symmetric states]\label{erg}
Let $\omega\in\cs_\bp(\mathfrak{A})$. It results that:
\begin{itemize}
\item[(i)] $\mathrm{supp}(\omega)\subset[\ga_\sigma\,;\,\pi_\omega(\ga_\sigma)^{\bp}\neq\{0\}]
\subset[\ga_\sigma\,;\,\sigma\in\Delta_+]$ and, in particular, $\omega$ is $\Delta_+^\perp$-invariant;
\item[(ii)] $\omega$ is $\bp$-\emph{abelian} if and only if for each $\sigma\in\Delta_+^*$,
\begin{equation}\label{abel}
\overline{\pi_\omega\left(\ga_{\sigma}\right)^{\bp}E_\om\ch_\omega}\perp\overline{\pi_\omega\left(\dotplus_{\tau\in T_\sigma}\ga_\tau\right)^{\bp}E_\om\ch_\omega}\,.
\end{equation}
\end{itemize}
\end{thm}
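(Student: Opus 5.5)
For (i), I will work with the mean ergodic projection $E_\om$ and exploit the fact that $\xi_\om$ is $\bp$-invariant, so that $E_\om\xi_\om=\xi_\om$. For any homogeneous $x\in\ga_\s$ this gives
$$
\om(x)=\langle\pi_\om(x)\xi_\om,\xi_\om\rangle=\langle E_\om\pi_\om(x)E_\om\xi_\om,\xi_\om\rangle\,.
$$
Consequently $\om\lceil_{\ga_\s}\neq0$ forces $\pi_\om(\ga_\s)^\bp\neq\{0\}$, which is the first inclusion. For the second inclusion I pick $x\in\ga_\s$ with $\pi_\om(x)^\bp\neq0$. The last statement of Proposition \ref{relations} then yields $u(\s,\s)=u(x,x)=1$, that is $\s\in\D_+$. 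Since the closed linear spans are taken over homogeneous components, both inclusions pass to the spans.

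To get $\D_+^\perp$-invariance, I use the density of $\ga_o=\dotplus_\s\ga_\s$. For $g\in\D_+^\perp$ and $x\in\ga_\s$ we have $\a_g(x)=\s(g)x$. If $\s\in\D_+$ then $\s(g)=1$, so $\om(\a_g x)=\om(x)$. If $\s\notin\D_+$ then $\om(x)=0=\om(\a_g x)$ by the support statement just proved. Linearity and continuity then give $\om\circ\a_g=\om$.

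For (ii), I will use the definition of $\bp$-abelianness from \cite{S}: $\om$ is $\bp$-abelian when $E_\om\pi_\om(\ga)E_\om$ is a commutative family of operators on $E_\om\ch_\om$. By density and linearity it suffices to test commutators $[\pi_\om(x)^\bp,\pi_\om(y)^\bp]$ for homogeneous $x\in\ga_\s$, $y\in\ga_\t$. By (i) I may assume $\s,\t\in\D_+$, since otherwise one of the two compressions vanishes. If $\s=0$ or $\t=0$, or if $u(\s,\t)=1$, Proposition \ref{relations}(i) with the minus sign gives a vanishing commutator. What remains is the case $\s\in\D_+^*$, $\t\in T_\s$. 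There, (i) of that Proposition gives
$$
[\pi_\om(x)^\bp,\pi_\om(y)^\bp]=\big(1-\overline{u(\s,\t)}\big)\pi_\om(x)^\bp\pi_\om(y)^\bp\,,
$$
with $1-\overline{u(\s,\t)}\neq0$. So $\bp$-abelianness is equivalent to $\pi_\om(x)^\bp\pi_\om(y)^\bp=0$ for all such pairs.

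The key step is to rewrite this product condition as the orthogonality \eqref{abel}. I plan to take adjoints: $\pi_\om(x)^\bp\pi_\om(y)^\bp=0$ for all $x\in\ga_\s$ and $y\in\ga_\t$ is equivalent to
$$
\langle\pi_\om(y)^\bp\eta,\pi_\om(x^*)^\bp\zeta\rangle=0\quad\text{for all }\eta,\zeta\in E_\om\ch_\om\,.
$$
Because $x^*$ ranges over $\ga_{-\s}$ and $T_{-\s}=T_\s$, letting $\s$ run over all of $\D_+^*$ (which is closed under negation) turns this into exactly \eqref{abel}. In the converse direction, I will use that $\dotplus_{\t\in T_\s}\ga_\t$ is spanned by homogeneous elements, so orthogonality to the whole span is the same as orthogonality to each homogeneous piece.

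The main obstacle is the bookkeeping between $\s$ and $-\s$ when taking adjoints, together with the symmetry properties of $T_\s$ listed above; these have to be matched carefully to obtain the stated form. A secondary point is to confirm that the notion of $\bp$-abelianness in \cite{S}, Definition 3.1.11, reduces to commutativity of the compressed operators on $E_\om\ch_\om$. Should it instead be phrased through the averages $\cam\{\om(a[x,\hat g(y)]b)\}$, I would use Proposition \ref{relations}(ii) to convert it into the same product condition.
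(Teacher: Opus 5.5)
Your proposal is correct and follows essentially the same route as the paper. For (i) you use $0<|\omega(x)|\le\|E_\omega\pi_\omega(x)E_\omega\|$ together with the last assertion of Proposition \ref{relations}; for (ii) you reduce to homogeneous $x\in\ga_\sigma$, $y\in\ga_\tau$ with $\sigma\in\Delta_+^*$, $\tau\in T_\sigma$, where the commutator is a nonzero multiple of $\pi_\omega(x)^{\bp}\pi_\omega(y)^{\bp}$, and then rewrite the vanishing of this product as orthogonality of ranges. Your version is slightly more careful than the paper's in two places: you check the $\Delta_+^\perp$-invariance explicitly, and you handle the passage from $x$ to $x^*$ through $T_{-\sigma}=T_\sigma$, whereas the paper asserts $\mathrm{ran}(\pi_\omega(x)^{\bp})=\mathrm{ran}(\pi_\omega(x^*)^{\bp})$ without proof (true, since $\pi_\omega(x)^{\bp}$ is normal for $\partial x\in\Delta_+$ by Proposition \ref{relations}(i)).
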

\begin{proof} Given a homogeneous $x\in\mathrm{supp}(\om)$, we have $0<|\om(x)|\leq\|E_\om\pi_\om(x)E_\om\|$. Therefore, by Proposition \ref{relations}, $u(x,x)=1$ i.e. $x\in[\ga_\sigma\colon\sigma\in\Delta_+]$ and point (i) is accomplished. As concerns point (ii) in the claim, point (i) along with an easy application of point (ii) in Proposition \ref{relations}, implies that for homogeneous $x,y\in\ga$, $\left[\pi_\omega(x)^\bp,\pi_\omega(y)^\bp\right]=0$ whenever $\partial x\in\Delta_-$, $\partial y\in\Delta_-$ or $u(x,y)=1$. That allows us to reduce to the case where $\partial x\in\Delta_+^*$ and $\partial y\in T_{\partial x}$. In this situation, again by point (ii) in Proposition \ref{relations}, $\left[\pi_\omega(x)^\bp,\pi_\omega(y)^\bp\right]=0$ if and only if $\pi_\omega(x)^\bp\pi_\omega(y)^\bp=0$, that is
\[
\langle\pi_\omega(x^*)^\bp\xi,\pi_\omega(y)^\bp\eta\rangle_{\ch_\omega^\bp}=0,\quad\xi,\eta\in\ch_\omega^\bp
\]
or $\text{ran}\left(\pi_\omega(x)^\bp\right)=\text{ran}\left(\pi_\omega(x^*)^\bp\right)\perp\text{ran}\left(\pi_\omega(y)^\bp\right)$, whence \eqref{abel} follows. 
\end{proof}
\begin{rem}\label{choquet}
In view of Corollary 4.4 in \cite{Bat}, every $\omega\in\cs_\bp(\ga)$ is $\bp$-abelian if and only if $\cs_\bp(\ga)$ is a Choquet simplex, in which case, for every $\om\in\cs_\bp(\ga)$ there exists a unique probability Radon measure $\n_\om$ on $\cs_\bp(\ga)$ satisfying:
\begin{itemize}
\item[(i)] if $B\subset\cs_\bp(\ga)$ is a Baire set with $B\cap\ce_\bp(\ga)=\emptyset$, then $\n_\om(B)=0$;
\item[(ii)] $\om=\int_{\cs_\bp(\ga)}\f\,\di\n_\om(\f)$.
\end{itemize}
\end{rem}
The measure $\n_\om$ is maximal, in the sense explained in \cite{S}, pag. 123, among all measures satisfying (ii) above and is nothing else than the measure associated to 
$\{\pi_\om(\ga),U_\om(\bp)\}'\subset\pi_\om(\ga)'$, see \cite{S}, Proposition 3.1.5.

We also note that, since $\n_\om$ satisfies (i) above, it is said that such a measure is {\it pseudo-supported} on $\ce_\bp(\ga)$. When $\ga$ is separable, $\n_\om$ is indeed supported on $\ce_\bp(\ga)$ because any Baire set is also a Borel set. 
Therefore,
\begin{equation}
\label{bary}
\om=\int_{\ce_\bp(\ga)}\f\,\di\n_\om(\f)\,,
\end{equation}
where \eqref{bary} is the so-called {\it barycentric decomposition} of $\om\in\cs_\bp(\ga)$.

The above discussion allows us to provide a sufficient condition in order to get the version of De Finetti Theorem for general twisted systems based on hermitian bicharacters.
\begin{cor}\label{tpfekl}
If $u\lceil_{\Delta_+\times \Delta_+}\equiv1$, then $\cs_\bp(\ga)$ is a Choquet simplex.
\end{cor}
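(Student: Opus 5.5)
The plan is to combine Theorem \ref{erg}(ii) with Remark \ref{choquet}. By that remark it suffices to show that every $\om\in\cs_\bp(\ga)$ is $\bp$-abelian. By Theorem \ref{erg}(ii), $\bp$-abelianess of $\om$ is equivalent to the orthogonality condition \eqref{abel}, required for each $\s\in\D_+^*$. So I only need to check that \eqref{abel} holds trivially under the hypothesis $u\lceil_{\D_+\times\D_+}\equiv1$.

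The key step is that the sets $T_\s$ are empty. By definition,
$$
T_\s=\{\t\in\D_+^*\,;\,u(\s,\t)\neq1\}.
$$
If $\s\in\D_+^*\subset\D_+$ and $\t\in\D_+^*\subset\D_+$, the hypothesis gives $u(\s,\t)=1$. Hence $T_\s=\varnothing$ for every $\s\in\D_+^*$.

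With the convention stated just before Theorem \ref{erg}, the space $\dotplus_{\t\in T_\s}\ga_\t$ is then $\{0\}$. Its compressed image $\pi_\om(\{0\})^\bp E_\om\ch_\om$ is therefore the zero subspace. The zero subspace is orthogonal to any subspace, in particular to $\overline{\pi_\om(\ga_\s)^\bp E_\om\ch_\om}$, so \eqref{abel} holds for every $\s\in\D_+^*$.

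It follows that every symmetric state $\om$ is $\bp$-abelian, and Remark \ref{choquet}, via Corollary 4.4 of \cite{Bat}, yields that $\cs_\bp(\ga)$ is a Choquet simplex.

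The only place needing care is the reduction inside the proof of Theorem \ref{erg}(ii). The commutators $[\pi_\om(x)^\bp,\pi_\om(y)^\bp]$ with $\partial x\in\D_-$ or $\partial y\in\D_-$ vanish, since by Theorem \ref{erg}(i) and Proposition \ref{relations} such compressions are zero. For $\partial x,\partial y\in\D_+$ the commutators vanish by Proposition \ref{relations}(i) with the minus sign, because then $u(x,y)=1$ and $1-\overline{u(x,y)}=0$. If one wants a self-contained argument, one can bypass \eqref{abel} and argue directly from these two facts. Together they give commutativity of the compressions of all homogeneous elements, and by density of $\ga_o$ and linearity they give commutativity of $E_\om\pi_\om(\ga)E_\om$, which is the definition of $\bp$-abelianess.
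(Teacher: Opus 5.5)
Your proposal is correct and follows the paper's proof: the hypothesis forces $T_\s=\varnothing$ for every $\s\in\D_+^*$, so \eqref{abel} holds trivially, every symmetric state is $\bp$-abelian, and Remark \ref{choquet} yields the Choquet simplex property. Your closing direct argument is also valid and is exactly the reduction already contained in the proof of Theorem \ref{erg}(ii): compressions of elements with degree in $\D_-$ vanish, and compressions with both degrees in $\D_+$ commute by Proposition \ref{relations}(i), since $1-\overline{u(x,y)}=0$.
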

\begin{proof}
If $u\lceil_{\Delta_+\times \Delta_+}\equiv1$ then, for each $\sigma\in\Delta_+^*$, $T_\sigma=\varnothing$. Hence, (iii) in \eqref{erg} is trivially satisfied for every $\omega\in\cs_\bp(\ga)$ since $\dotplus_{\tau\in T_\sigma}\ga_{\tau}=\{0\}$. By Remark \ref{choquet}, $\cs_\bp(\ga)$ is a Choquet simplex.
\end{proof}
To end the present section, we point out the following surprising fact. The hypoythesis required in Corollary \ref{tpfekl} provides us only the three cases listed in Theorem \ref{three}. The first two cases, $G=\{1\}$ and $G=\bz_2$, correspond to the well-known models of usual tensor product and Fermi product, respectively. The third case, involving the Klein group $G=\bz_2\times\bz_2$, has never been addressed before. In all cases, the single states appearing in the infinite product providing the extremal symmetric states should be invariant under the (possibly proper) closed subgroup
$\D_+^\perp$ of $G$, and not w.r.t. the whole.

\section{The Klein twisted $C^*$-chain}
\label{klsce}

As we shown in the previous sections, in order to carried on a fruitful investigation of the properties of symmetric states, the analysis reduces to three cases only. The case of the usual tensor product is well known. It corresponds to the trivial case $G=\{1\}$, and thus $\widehat{G}=\{0\}$. The second case corresponds to the Fermi tensor product, only recently studied in \cite{Ffe}. In this case, $G$ is the multiplicative group $\bz_2$ with, again, $\widehat{G}=\bz_2$ (the group operation coinciding with the addition mod-2). In both cases, the isotropy subgroup $\D_+\subset\widehat{G}$ is trivial, and then $\D_+^\perp$ is the whole group. The last case, corresponding to the Klein group $K_4=\bz_2\times\bz_2$ (and dual group $\widehat{K_4}=\bz_2\oplus\bz_2$), deserves a more accurate analysis since $\D_+\subsetneq\widehat{K_4}$. 

Consider then a $C^*$-dynamical system $(\gb,K_4,\beta)$ based on the Klein group, together with the (unique, up to equivalence) Klein bicharacter $u_{\rm{K}}$.
Since the isotropy subgroup 
$\D_+$, and correspondingly its annihilator subgroup $\D_+^\perp\subsetneq K_4$, play a crucial role in investigating the ergodic properties of the model (cf. Theorem \ref{erg}), we immediately recognise that, in all nontrivial cases, $\cs_{\D_+^\perp}(\ga)\supsetneq\cs_{K_4}(\ga)$. This would imply that the chain built by using the min-norm might be different from (precisely, strictly dominated by) that built from $\cs_{\D_+^\perp}(\ga)$. This possible obstruction can be avoided by assuming that the involved $C^*$-algebra is nuclear.
As we will see below, a surprising fact will be that the extremal symmetric states are product states of a single state, invariant under $\D_+^\perp$ only, instead of the whole Klein group. 

We start with the following
\begin{lem}\label{kleinpos}
Let $(\gb,K_4,\beta)$ be a $C^*$-dynamical system and consider $u_{\rm{K}}\in\mathsf{Bic}(K_4)$. If $\omega,\varphi\in\cs(\gb)$ and $\mathrm{supp}(\omega),\mathrm{supp}(\varphi)\subset\gb_{(0,0)}\oplus\gb_{(1,1)}$ (equivalently, $\omega,\varphi\in\cs_{\Delta^\perp_+}(\gb)$), then $\omega\times\varphi$ is a state on $\gb\ok\gb$ and
\[
|(\omega\times\varphi)(x)|\leq(\omega\times\varphi)(x^*x)^{1/2},\quad x\in\gb\ok\gb\,.
\]
In particular, $\pi_{\omega\times\varphi}$ uniquely extends to a representation of $\gb\ok\gb$ acting by bounded operators on the Hilbert space $\ch_{\omega\times\varphi}$.
\end{lem}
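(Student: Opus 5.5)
Set $H:=\D_+^\perp\subset K_4$. Since $\D_+=\{(0,0),(1,1)\}$, we get $H=\{(1,1),(-1,-1)\}\cong\bz_2$. A state $\om\in\cs(\gb)$ has $\mathrm{supp}(\om)\subset\gb_{(0,0)}\oplus\gb_{(1,1)}$ exactly when $\om$ vanishes on $\gb_{(1,0)}$ and on $\gb_{(0,1)}$, i.e. when $\om\circ\b_h=\om$ for $h\in H$. This gives the stated equivalence.

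The key observation is that $u_{\rm K}$, restricted to $\widehat{G}\times\D_+$ (or $\D_+\times\widehat{G}$), factors through the quotient map $\widehat{G}\to\widehat{G}/\D_+^{\perp\perp}$. This is because $u_{\rm K}(\s,(1,1))=(-1)^{\s_1+\s_2}$, which is a character of $\widehat{G}$ trivial on $\D_+$, hence a character of $\widehat{H}=\widehat{G}/\D_+$.

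I would then reduce to the $H$-graded twisted product, in the spirit of Proposition \ref{nondeg}. Regrade $\gb$ by $\widehat{H}\cong\bz_2$ through the restricted action $\b\lceil_H$: the even part is $\gb_{(0,0)}\oplus\gb_{(1,1)}$ and the odd part is $\gb_{(1,0)}\oplus\gb_{(0,1)}$.

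Next, compute $(\om\times\f)(x^*x)$ for $x=\sum_i a_i\ok b_i$ with homogeneous $a_i,b_i$. Expanding with the twisted product rule gives
$$
(\om\times\f)(x^*x)=\sum_{i,j}c_{ij}\,\om(a_i^*a_j)\,\f(b_i^*b_j),
$$
where $c_{ij}$ is a product of values of $u_{\rm K}$. Terms survive only when $\partial a_i-\partial a_j\in\D_+$ and $\partial b_i-\partial b_j\in\D_+$. On these pairs, the twisting phases $u_{\rm K}(\partial a_j-\partial a_i,\partial b_i)$ depend on the degrees only through the $\widehat{H}$-classes. I would then split the sum into the four blocks indexed by the $\widehat{H}$-parities $(p,q)\in\bz_2^2$ of $(\partial a_i,\partial b_i)$. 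Within each block the phase should become a coboundary $\overline{\la_i}\la_j$, so the block equals $\om\otimes\f$ evaluated on $y^*y$ for a suitable untwisted element $y$. That gives positivity.

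For the Cauchy–Schwarz inequality, positivity plus $(\om\times\f)(x^*)=\overline{(\om\times\f)(x)}$ and $(\om\times\f)(\idd)=1$ already yield $|\psi(x)|^2\le\psi(x^*x)$, by the standard Cauchy–Schwarz inequality $|\psi(\idd^*x)|^2\le\psi(\idd)\psi(x^*x)$.

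To extend $\pi_{\om\times\f}$ I would argue as follows. The inequality shows that $\om\times\f$ is bounded on $\gb_o\ok\gb_o$ with respect to the min norm. Alternatively, I would realise the GNS representation concretely on $\ch_\om\otimes\ch_\f$ with a Klein-type operator, $\pi(a\ok b)=\pi_\om(a)U_\om(g_{\partial b})\otimes\pi_\f(b)$, where $g_{\partial b}\in H$ is chosen through \eqref{lgtr}. This is possible precisely because the twisting character lies in $H$, on which $\om$ is invariant, so that $U_\om$ is defined. This operator formula gives boundedness, and by minimality of the min-norm (Theorem 13.1 of \cite{FV}) the representation extends uniquely to $\gb\ok\gb$.

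The main obstacle I expect is the choice of $g_{\partial b}$. The character $\s\mapsto\overline{u_{\rm K}(\s,\partial b)}$ lies in $H$ only when $\partial b\in\D_+$. For odd $\partial b$ I would try instead to exploit the fact that the off-diagonal blocks vanish in the state, so that $U_\om(g)$ for $g\notin H$ never has to be implemented. If that fails, I would fall back on the direct positivity computation above.
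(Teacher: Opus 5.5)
Your positivity argument is, in substance, the paper's. The paper splits $a^{(i)}=a^{(i)}_++a^{(i)}_-$ and $b^{(i)}=b^{(i)}_++b^{(i)}_-$ along the $\D_+^\perp$-grading and discards the cross terms using the support hypothesis. In the two blocks with odd second leg it absorbs the residual twist by replacing $a$ with $\b_{\widetilde g}(a)$, for a fixed $\widetilde g\in K_4\setminus\D_+^\perp$. It ends with four terms of the form $\psi_{\om,\f}(y^\dagger y)\geq0$.

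That replacement is exactly the coboundary you leave unspecified. On surviving pairs $c_{ij}=u_{\rm K}(\partial a_j-\partial a_i,\partial b_i)$ with $\partial a_j-\partial a_i\in\D_+$. For odd $\t$ we have $u_{\rm K}((1,1),\t)=-1=\chi_{(1,1)}(\widetilde g)$, while $u_{\rm K}((1,1),\t)=1$ for even $\t$. Hence $c_{ij}=\overline{\la_i}\la_j$ with $\la_i:=\chi_{\partial a_i}(\widetilde g)$ if $\partial b_i$ is odd and $\la_i:=1$ otherwise, i.e. $\la_ia_i=\b_{\widetilde g}(a_i)$ on the odd blocks. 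You should write this choice down rather than say the phase ``should'' be a coboundary.

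Two smaller corrections. First, $c_{ij}$ is not a function of $\widehat H$-classes alone: it sees the element $\partial a_j-\partial a_i\in\D_+$, which is why a genuine coboundary is needed. Second, no reduction ``in the spirit of Proposition \ref{nondeg}'' is available, since $u_{\rm K}$ is nondegenerate and does not descend to $\widehat H\times\widehat H$; only the parity bookkeeping is used. Your Cauchy--Schwarz step is as in the paper.

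The genuine gap is in the last assertion. As the paper's proof shows, the lemma concerns the algebraic twisted product ($K_4$ is finite, so $\gb_o=\gb$). What must be shown is that the GNS operators of $\psi:=\om\times\f$ are bounded, and none of your arguments does this:
\begin{itemize}
\item Cauchy--Schwarz bounds $|\psi(x)|$ by $\psi(x^*x)^{1/2}$, not by $\|x\|_{\min}$, so it gives no min-boundedness. The paper even treats that property as unclear right after Theorem \ref{nucl}.
\item Minimality of $\|\cdot\|_{\min}$ (Theorem 13.1 of \cite{FV}) says $\|\cdot\|_{\min}\le\g$ for compatible $\g$. That is the wrong direction for bounding $\|\pi(x)\|$ by $\|x\|_{\min}$.
\item The Klein formula fails for odd $\partial b$, as you note, and the vanishing of off-diagonal blocks produces no operator. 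The correct repair uses the representation $\pi_\om\circ\b_{\widetilde g}$, not a unitary, on the odd part of $\ch_\f$.
\end{itemize}

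The missing step, which the paper takes from Lemma 3.1 of \cite{FV}, is that positivity alone suffices. The algebra $\gb\ok\gb$ is generated by $\gb\odot\idd$ and $\idd\odot\gb$, which are $*$-isomorphic copies of the $C^*$-algebra $\gb$ (the twist is trivial when one leg is $\idd$). Writing $\|a\|^2\idd-a^*a=c^*c$ with $c\in\gb$ gives $\psi\big(y^*(a\odot\idd)^*(a\odot\idd)y\big)\le\|a\|^2\psi(y^*y)$. The same holds for $\idd\odot b$, and $a\odot b=(a\odot\idd)(\idd\odot b)$.

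If you also want min-boundedness, get it by domination rather than Cauchy--Schwarz. Set $\bar\om:=\frac12(\om+\om\circ\b_{\widetilde g})$ and define $\bar\f$ likewise; both are $K_4$-invariant. Bilinearity of $\times$ and the positivity you proved, applied to the three mixed products, give $\psi\le4\,\bar\om\times\bar\f$. Hence $\pi_\psi$ is equivalent to a subrepresentation of $\pi_{\bar\om\times\bar\f}$, and $\|\pi_\psi(x)\|\le\|x\|_{\min}$.
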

\begin{proof}
Let $x:=\sum\limits_{i=1}^na^{(i)}\odot b^{(i)}=\sum\limits_{i=1}^n\left(a_{+}^{(i)}+a_{-}^{(i)}\right)\odot\left(b_{+}^{(i)}+b_{-}^{(i)}\right)\in\gb\ok\gb$, where $a_+^{(i)},b_+^{(i)}\in\gb_{(0,0)}\oplus\gb_{(1,1)}$ and $a_-^{(i)},b_-^{(i)}\in\gb_{(0,1)}\oplus\gb_{(1,0)}$. Then,
\[
x^*x=\sum\limits_{i,j=1}^n\left[\left(a_{+}^{(i)}+a_{-}^{(i)}\right)\odot\left(b_{+}^{(i)}+b_{-}^{(i)}\right)\right]^*\left[\left(a_{+}^{(j)}+a_{-}^{(j)}\right)\odot\left(b_{+}^{(j)}+b_{-}^{(j)}\right)\right]
\]
where, for every $i=1,\dots,n$,
\begin{multline*}
\left[\left(a_{+}^{(i)}+a_{-}^{(i)}\right)\odot\left(b_{+}^{(i)}+b_{-}^{(i)}\right)\right]^*\\
=\left(a_{+}^{(i)}\odot b_{+}^{(i)}\right)^\dagger+\sum\limits_{g\in\Delta^\perp_{-}}\left(\beta_{g}(a^{(i)})\odot b_g^{(i)}\right)^\dagger+\sum\limits_{g\in\Delta^\perp_{+}}\left(\beta_{g}(a^{(i)}_{-})\odot b_g^{(i)}\right)^\dagger\,.
\end{multline*}
Hence, for each $i,j=1,\dots,n$, once set $\Delta_{-}^\perp:=\{(0,1),(1,0)\}$,
\begin{multline*}
\left[\left(a_{+}^{(i)}+a_{-}^{(i)}\right)\odot\left(b_{+}^{(i)}+b_{-}^{(i)}\right)\right]^*\left[\left(a_{+}^{(j)}+a_{-}^{(j)}\right)\odot\left(b_{+}^{(j)}+b_{-}^{(j)}\right)\right]\\
=(a_{+}^{(i)}\odot b_{+}^{(i)})^\dagger\cdot(a_{+}^{(j)}\odot b^{(j)})+\sum\limits_{g\in\Delta^\perp_{-}}(\beta_{g}(a^{(i)})\odot b_g^{(i)})^\dagger\cdot(\beta_{g}(a_{+}^{(j)})\odot b^{(j)})\\+\sum\limits_{g\in\Delta^\perp_{+}}(\beta_{g}(a^{(i)}_{-})\odot b_g^{(i)})^\dagger\cdot(a^{(j)}_{+}\odot b^{(j)})+\sum\limits_{g\in\Delta^\perp_{+}}(a_{+}^{(i)}\odot b_g^{(i)})^\dagger\cdot(\beta_{g}(a_{-}^{(j)})\odot b^{(j)})\\+\sum\limits_{g\in\Delta^\perp_{-}}(\beta_{g}(a^{(i)})\odot b_g^{(i)})^\dagger\cdot(\beta_{g}(a_{-}^{(j)})\odot b^{(j)})+\sum\limits_{g\in\Delta^\perp_{+}}(\beta_{g}(a^{(i)}_{-})\odot b_g^{(i)})^\dagger\cdot(\beta_{g}(a^{(j)}_{-})\odot b^{(j)})
\end{multline*}
and taking into account that $\mathrm{supp}(\omega),\mathrm{supp}(\varphi)\subset\gb_{(0,0)}\oplus\gb_{(1,1)}$
\begin{multline}\label{prod}
\left(\omega\times\varphi\right)
\left(
\left[\left(a_{+}^{(i)}+a_{-}^{(i)}\right)\odot\left(b_{+}^{(i)}+b_{-}^{(i)}\right)\right]^*
\left[\left(a_{+}^{(j)}+a_{-}^{(j)}\right)\odot\left(b_{+}^{(j)}+b_{-}^{(j)}\right)\right]
\right)\\
=\psi_{\omega,\varphi}\left[\left(a_{+}^{(i)}\odot b_{+}^{(i)}\right)^\dagger\cdot\left(a_{+}^{(j)}\odot b_{+}^{(j)}\right)\right]\\+
\psi_{\omega,\varphi}\left[\sum\limits_{g\in\Delta^\perp_{-}}\left(\beta_{g}\left(a^{(i)}_{+}\right)\odot b_g^{(i)}\right)^\dagger\cdot\left(\beta_{g}\left(a_{+}^{(j)}\right)\odot b_{-}^{(j)}\right)\right]\\+\psi_{\omega,\varphi}\left[\sum\limits_{g\in\Delta^\perp_{-}}\left(\beta_{g}\left(a^{(i)}_{-}\right)\odot b_g^{(i)}\right)^\dagger\cdot\left(\beta_{g}\left(a_{-}^{(j)}\right)\odot b^{(j)}_{-}\right)\right]\\+
\psi_{\omega,\varphi}
\left[
\left(a^{(i)}_{-}\odot b_{+}^{(i)}\right)^\dagger\cdot\left(a^{(j)}_{-}\odot b^{(j)}_{+}\right)\right]\,.
\end{multline}
Now, for a fixed representative $\widetilde{g}\in\Delta_{-}^\perp$,
\begin{itemize}
\item $\beta_{g}\left(a_{+}^{(i)}\right)=\beta_{\widetilde{g}}\left(a_{+}^{(i)}\right)$ for $g\in\Delta^\perp_{-}$. Therefore, the second addend in \eqref{prod} becomes
\begin{multline*}
\psi_{\omega,\varphi}\left[\sum\limits_{g\in\Delta^\perp_{-}}\left(\beta_{g}(a^{(i)}_{+})\odot b_g^{(i)}\right)^\dagger\cdot\left(\beta_{g}(a_{+}^{(j)})\odot b_{-}^{(j)}\right)\right]\\
=\psi_{\omega,\varphi}\left[\sum\limits_{g\in\Delta^\perp_{-}}\left(\beta_{\widetilde{g}}(a^{(i)}_{+})\odot b_{g}^{(i)}\right)^\dagger\cdot\left(\beta_{\widetilde{g}}(a_{+}^{(j)})\odot b_{-}^{(j)}\right)\right]\\
=\psi_{\omega,\varphi}\left[\left(\sum\limits_{g\in\Delta^\perp_{-}}\beta_{\widetilde{g}}(a^{(i)}_{+})\odot b_{g}^{(i)}\right)^\dagger\cdot\left(\beta_{\widetilde{g}}(a_{+}^{(j)})\odot b_{-}^{(j)}\right)\right]\\=\psi_{\omega,\varphi}\left[\left(\beta_{{g'}}\left(a_{+}^{(i)}\right)\odot b_{-}^{(i)}\right)^\dagger\cdot\left( \beta_{{g'}}\left(a_{+}^{(j)}\right)\odot b_{-}^{(j)}\right)\right]\,.
\end{multline*}
\item $\beta_{g}\left(a_{-}^{(i)}\right)=f_{g-\widetilde{g}}\beta_{\widetilde{g}}\left(a_{-}^{(i)}\right)$ for $g\in\Delta^\perp_{-}$, with $f_{g-\widetilde{g}}\in\{\pm1\}$. Therefore, the third addend in \eqref{prod} becomes
\begin{multline*}
\sum\limits_{g\in\Delta^\perp_{-}}\left(f_{g-\widetilde{g}}\beta_{\widetilde{g}}\left(a_{-}^{(i)}\right)\odot b_g^{(i)}\right)^\dagger\cdot\left(f_{g-\widetilde{g}}\beta_{\widetilde{g}}\left(a_{-}^{(j)}\right)\odot b^{(j)}_{-}\right)\\=\sum\limits_{g\in\Delta^\perp_{-}}(f_{g-\widetilde{g}})^2\left(\beta_{\widetilde{g}}\left(a_{-}^{(i)}\right)\odot b_g^{(i)}\right)^\dagger\cdot\left(\beta_{\widetilde{g}}\left(a_{-}^{(j)}\right)\odot b^{(j)}_{-}\right)\\
=\left(\beta_{\widetilde{g}}\left(a_{-}^{(i)}\right)\odot b_{-}^{(i)}\right)^\dagger\cdot\left(\beta_{\widetilde{g}}\left(a_{-}^{(j)}\right)\odot b^{(j)}_{-}\right)
\end{multline*}
\end{itemize}
Putting all together (once fixed $\widetilde{g}\in\Delta_{-}^\perp$),
\begin{multline*}
(\omega\times\varphi)(x^*x)=\psi_{\omega,\varphi}\left[\left(\sum\limits_{i=1}^na_{+}^{(i)}\odot b_{+}^{(i)}\right)^\dagger\cdot\left(\sum\limits_{j=1}^na_{+}^{(j)}\odot b_{+}^{(j)}\right)\right]\\
+\psi_{\omega,\varphi}\left[\left(\sum\limits_{i=1}^n\beta_{\widetilde{g}}\left(a_{+}^{(i)}\right)\odot b_{-}^{(i)}\right)^\dagger\cdot\left(\sum\limits_{j=1}^n\beta_{\widetilde{g}}\left(a_{+}^{(j)}\right)\odot b_{-}^{(j)}\right)\right]\\
+\psi_{\omega,\varphi}\left[\left(\sum\limits_{i=1}^n\beta_{\widetilde{g}}(a^{(i)}_{-})\odot b_{-}^{(i)}\right)^\dagger\cdot\left(\sum\limits_{j=1}^n\beta_{\widetilde{g}}(a_{-}^{(j)})\odot b^{(j)}_{-}\right)\right]\\+\psi_{\omega,\varphi}\left[\left(\sum\limits_{i=1}^na^{(i)}_{-}\odot b_{+}^{(i)}\right)^\dagger\cdot\left(\sum\limits_{j=1}^na^{(j)}_{-}\odot b^{(j)}_{+}\right)\right]
\end{multline*}
All the four terms are manifestly positive, whence $\omega\times\varphi$ is a state on $\gb\ok\gb$. The inequality in the assertion is nothing but the Cauchy–Bunyakovsky–Schwarz inequality. Lastly, by Lemma 3.1 in \cite{FV}, $\pi_{\omega\times\varphi}$ uniquely extends to a representation of $\gb\ok\gb$ acting by bounded operators on the Hilbert space $\ch_{\omega\times\varphi}$.
\end{proof}
In view of Lemma \ref{kleinpos}, we can build an ``intermediate" $C^*$-norm on $\gb\ok\gb$ based on products of $\Delta_+^\perp$-invariant states, which is compatible with the direct product action of $K_4\times K_4$. 
\begin{thm}\label{nucl}
Let $(\gb,K_4,\beta)$ be a $C^*$-dynamical system. Then,
\[
\|\cdot\|_{+}:=\sup\limits_{\omega,\varphi\in\cs_{\Delta_+^\perp}(\gb)}\|\pi_{\omega\times\varphi}(\cdot)\|
\]
is a $(\beta\times\beta)$-compatible $C^*$-norm on $\gb\ok\gb$.\\In particular, $\|x\|_{\min}\leq\|x\|_{+}\leq\|x\|_{\max}$ $(x\in\gb\ok\gb)$, where the equalities are simultaneously satisfied for every $x\in\gb\ok\gb$ if and only if $\gb^{K_4}$ is nuclear.
\end{thm}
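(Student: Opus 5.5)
We start from Lemma \ref{kleinpos}: for $\omega,\varphi\in\cs_{\Delta_+^\perp}(\gb)$ the product functional $\omega\times\varphi$ is a state on $\gb_o\ok\gb_o$ satisfying the Cauchy--Schwarz inequality. Hence $\pi_{\omega\times\varphi}$ extends to a bounded $*$-representation on $\ch_{\omega\times\varphi}$. The first step is to show that the supremum defining $\|\cdot\|_+$ is finite. For this we argue as in \cite{FV}. On elementary tensors of homogeneous elements, $\|\pi_{\omega\times\varphi}(a\odot b)\|\le\|a\|\|b\|$, because $(a\odot b)^*(a\odot b)=\pm a^*a\odot b^*b$ up to a phase, and $\omega\times\varphi$ is positive. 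On the dense part, $\|\cdot\|_+$ is then dominated by the projective-type sum $\sum_i\|a_i\|\|b_i\|$, hence it is finite. Alternatively, we note that each $\pi_{\omega\times\varphi}$ is a representation of $\gb_o\ok\gb_o$, so $\|\cdot\|_+\le\|\cdot\|_{\max}$ directly.

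The second step is to check that $\|\cdot\|_+$ is a $C^*$-seminorm. This is automatic, since it is a supremum of $C^*$-seminorms $x\mapsto\|\pi_{\omega\times\varphi}(x)\|$. It is moreover a norm: $\cs_{K_4}(\gb)\subset\cs_{\Delta_+^\perp}(\gb)$, so $\|\cdot\|_{\min}\le\|\cdot\|_+$, and $\|\cdot\|_{\min}$ is already a norm because product states of invariant states separate points. This also yields the first inequality. The second inequality $\|\cdot\|_+\le\|\cdot\|_{\max}$ holds because every $\pi_{\omega\times\varphi}$ is a $*$-representation of the involutive algebra.

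For $(\beta\times\beta)$-compatibility, we need $\|(\beta_g\ok\beta_h)(x)\|_+=\|x\|_+$. Here we use that $\omega\circ\beta_g\in\cs_{\Delta_+^\perp}(\gb)$ whenever $\omega$ is, since $K_4$ is abelian. Also, $(\omega\times\varphi)\circ(\beta_g\ok\beta_h)=(\omega\circ\beta_g)\times(\varphi\circ\beta_h)$, which we check on homogeneous tensors. The GNS uniqueness then shows that $\pi_{\omega\times\varphi}\circ(\beta_g\ok\beta_h)$ is unitarily equivalent to $\pi_{(\omega\circ\beta_g)\times(\varphi\circ\beta_h)}$. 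Taking suprema over the invariant family gives the isometry, so the action extends to the completion. Any further compatibility requirement from \cite{FV} (cross property, marginal embeddings) follows by sandwiching between $\|\cdot\|_{\min}$ and $\|\cdot\|_{\max}$, both of which are compatible.

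The last assertion reduces to \cite{FV1}. If $\gb^{K_4}$ is nuclear, then $\|\cdot\|_{\min}=\|\cdot\|_{\max}$ by \cite{FV1}, so the sandwiched $\|\cdot\|_+$ agrees with both. Conversely, if both equalities hold for every $x$, then in particular $\|\cdot\|_{\min}=\|\cdot\|_{\max}$ on $\gb_o\ok\gb_o$. By the characterisation in \cite{FV1}, with both factors equal to $\gb$, this forces $\gb^{K_4}$ to be nuclear. The main obstacle I expect is the compatibility step: $\cs_{\Delta_+^\perp}(\gb)$-product states do not come from the framework of \cite{FV}, which uses fully invariant states. So I would verify carefully that the intertwining of product states with $\beta_g\ok\beta_h$ holds even though the states are only $\Delta_+^\perp$-invariant, and that Lemma \ref{kleinpos}'s positivity survives composing with $\beta_g$. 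The latter should be immediate, because supports are preserved by $\beta_g$.
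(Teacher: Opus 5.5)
Your proposal is correct and is essentially the paper's proof. The paper obtains the norm property from $\cs_{K_4}(\gb)\subset\cs_{\Delta_+^\perp}(\gb)$, compatibility from the $(\beta\times\beta)^{\rm t}$-stability of $\cs_{\Delta_+^\perp}(\gb)\times\cs_{\Delta_+^\perp}(\gb)$, and the nuclearity clause by citation; the only differences are that you check by hand, through GNS uniqueness, what the paper quotes as Theorem 11.1 of \cite{FV}, and you get $\|\cdot\|_{\min}\leq\|\cdot\|_+$ directly from the inclusion of state families rather than from the minimality Theorem 13.1 of \cite{FV}.

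One caveat, shared with the paper, concerns the ``only if'' half of the nuclearity clause. It applies the nuclearity characterisation to the single fixed pair $(\gb,\gb)$, which is stronger than the usual ``for all partners'' form. For the trivial $K_4$-action it would assert that $\gb\otimes_{\min}\gb=\gb\otimes_{\max}\gb$ forces $\gb$ to be nuclear, and this is false for Pisier's non-nuclear $C^*$-algebra with WEP and LLP. So that step holds only if the cited result is valid in this fixed-pair form, which it is not in general.
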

\begin{proof} Evidently $\cs_{K_4}(\gb)\subset\cs_{\Delta_+^\perp}(\gb)$, hence $\cs_{\Delta_+^\perp}(\gb)\times\cs_{\Delta_+^\perp}(\gb)$ separates the points of $\gb\ok\gb$ and $\|\cdot\|_{\Delta_+^\perp}$ defines a $C^*$-norm on $\gb\ok\gb$. Now, if $\omega\in\cs_{\Delta_+^\perp}(\gb)$, then $\omega\circ\beta_g\in\cs_{\Delta_+^\perp}(\gb)$ for every $g\in K_4$. It follows that $\cs_{\Delta_+^\perp}(\gb)\times\cs_{\Delta_+^\perp}(\gb)$ is left globally stable by the transposed action $(\beta\times\beta)^{\rm t}$ of $K_4\times K_4$. By Theorem 11.1 in \cite{FV}, $\|\cdot\|_{+}$ is $(\beta\times\beta)$-compatible. Lastly, by Theorem 13.1 in \cite{FV} and the very definition of the maximal $C^*$-norm,
\[
\|x\|_{\min}\leq\|x\|_{+}\leq\|x\|_{\max},\quad x\in\gb\ok\gb\,,
\]
where the equalities are simultaneously satisfied for every $x\in\gb\ok\gb$ if and only if $\gb^{K_4}$ is nuclear, thanks to Theorem 17.1 in \cite{FV}.
\end{proof}
Without assuming the nuclearity of $\gb$, it seems unclear whether $\|\cdot\|_{\min}$ coincides with $\|\cdot\|_{+}$ or not, since in general $\cs_{\Delta_+^\perp}(\gb)$ can properly contain $\cs_{K_4}(\gb)$. Still, when $\gb$ is nuclear, in view of Lemma \ref{kleinpos} and Theorem \ref{nucl}, we can construct the \emph{infinite product state} of a Klein twisted chain $(\ga,K_4,\alpha)$ generated by a sequence $(\psi_i)_{i\in\bn}\subset\cs_{\Delta_+^\perp}(\gb)$. Indeed, let
\[
\begin{cases}
\omega_1:=\psi_1\in\cs_{\Delta_+^\perp}(\gb)\\
\omega_{n+1}:=\omega_n\times\psi_{n+1}\in\cs_{\Delta_+^\perp}(\ga_{n+1}),\quad n\in\bn
\end{cases}
\]
where $\ga_{n+1}=\ga_n\ok_{\min}\gb=\ga_n\ok_{+}\gb$ (the last equality being given by the nuclearity of $\gb$). Notice that the sequence $(\omega_n)_{n\in\bn}$ evidently satisfies the relations
\[
\omega_{n+1}\circ\iota_n=\omega_n,\quad n\in\bn
\]
so that we can define a (algebraically) positive, unital, linear functional $\omega_\infty\colon\ga_\infty\to\bc$ by
\[
\omega_\infty(a):=\omega_n(a_n)
\]
for every $a\in\ga_\infty$, $n\in\bn$ and $a_n\in\ga_n$ that satisfy $\phi_n(a_n)=a$. Moreover, $|\omega_\infty(a)|\leq\|a_n\|_{\ga_n}=\|a\|_{\ga}$ hence $\omega_\infty$ extends to a well-defined state $\omega$ on $\ga$, the unique one satisfying
\[
\omega\left(j_1(b_1)\dots j_n(b_n)\right)=\prod\limits\limits_{i=1}^n\psi_i(b_i)
\]
for every $b_i\in\gb_i=\gb$, $i=1,\dots,n$ ($n\in\bn$). Plus, $\omega$ is invariant under the restriction to $\Delta_+^\perp$ of the $K_4$-action on $\ga$. By denoting $\omega$ with $\prod\limits_{n\in\bn}\psi_n\in\cs_{\Delta_+^\perp}(\ga)$, we collect the result of the above construction in the following
\begin{cor}
Let $(\gb,K_4,\beta)$ be a $C^*$-dynamical system, with $\gb$ nuclear. If $(\psi_n)_{n\in\bn}\subset\cs_{\Delta_+^\perp}(\gb)$, then the infinite product functional $\omega:=\prod\limits_{n\in\bn}\psi_n$ is a well-defined, $\Delta_+^\perp$-invariant state of the (minimal) Klein twisted chain $(\ga,K_4,\alpha)$ of $\gb$. 
\end{cor}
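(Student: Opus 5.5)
The plan follows the pattern of the discussion preceding the statement: build the product state on each finite stage, check compatibility with the embeddings, and pass to the inductive limit.

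\emph{Step 1: finite stages.} By induction on $n$, I show that $\omega_n:=\psi_0\times\cdots\times\psi_n$ is a $\Delta_+^\perp$-invariant state on $\ga_n$. For the inductive step I need that $\omega_n\in\cs_{\Delta_+^\perp}(\ga_n)$ and $\psi_{n+1}\in\cs_{\Delta_+^\perp}(\gb)$ give a state $\omega_n\times\psi_{n+1}$ on $\ga_n\ok\gb$. This should follow from Lemma \ref{kleinpos} applied to the pair of $K_4$-systems $(\ga_n,K_4,\a^{(n)})$ and $(\gb,K_4,\b)$. The computation there uses only the splitting of each factor into the $\Delta_+$ and $\Delta_-$ graded parts and the support condition, so it goes through verbatim with different first and second factors. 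Positivity then holds on the algebraic twisted product. The Cauchy--Schwarz bound, via Lemma 3.1 of \cite{FV}, shows that $\pi_{\omega_n\times\psi_{n+1}}$ is bounded, so $\omega_n\times\psi_{n+1}$ is bounded for the norm $\|\cdot\|_+$ of Theorem \ref{nucl}. Since $\gb$ is nuclear, $\gb^{K_4}$ is nuclear, being the range of a conditional expectation. By Theorem \ref{nucl} and its iteration (using that the inductively built $\ga_n$ has nuclear fixed-point algebra, or directly Theorem 17.1 of \cite{FV}), $\|\cdot\|_+=\|\cdot\|_{\min}$ on $\ga_n\ok\gb$. Hence $\omega_{n+1}$ extends to a state on $\ga_{n+1}=\ga_n\ok_{\min}\gb$. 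Invariance under $\Delta_+^\perp$ is checked on homogeneous elementary tensors: $\omega_{n+1}(x\ok b)=\omega_n(x)\psi_{n+1}(b)$, which vanishes unless both degrees lie in $\Delta_+$.

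\emph{Step 2: compatibility and limit.} The relation $\omega_{n+1}\circ\iota_n=\omega_n$ is immediate from $\iota_n(x)=x\ok\idd$ and $\psi_{n+1}(\idd)=1$. This defines a positive unital functional $\omega_\infty$ on the algebraic direct limit $\ga_\infty$, with $|\omega_\infty(a)|\leq\|a\|$ because each $\omega_n$ is a state. It therefore extends by continuity to a state $\omega$ on $\ga$. The extension is unique, since $\ga_\infty$ is dense in $\ga$. The product formula holds on the generating elementary tensors. $\Delta_+^\perp$-invariance passes to the limit because $\a$ is the inductive limit of the $\a^{(n)}$ and the invariance holds on the dense subalgebra.

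\emph{Main obstacle.} The delicate point is Step 1: promoting Lemma \ref{kleinpos} from $\gb\ok\gb$ to $\ga_n\ok\gb$, and identifying the $\|\cdot\|_+$ and min completions at every stage. If nuclearity of $\ga_n^{K_4}$ is hard to justify directly, I would first try to show that $\ga_n$ is itself nuclear, using Klein-type or crossed-product arguments for tensor products of nuclear algebras, and then apply Theorem 17.1 of \cite{FV}.
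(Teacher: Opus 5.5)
Your proposal is correct and follows essentially the same route as the paper. At each finite stage you combine the two-factor version of Lemma \ref{kleinpos} with the identification $\ga_n\ok_{\min}\gb=\ga_n\ok_{+}\gb$ coming from nuclearity, and then use $\omega_{n+1}\circ\iota_n=\omega_n$ and $|\omega_\infty(a)|\leq\|a\|$ to extend from $\ga_\infty$ to $\ga$. The worry in your last paragraph is unnecessary: the min/max coincidence criterion of \cite{FV1} only needs one of the two fixed-point algebras (here $\gb^{K_4}$) to be nuclear, so you do not need nuclearity of $\ga_n$ or of $\ga_n^{K_4}$.
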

Of crucial importance for the De Finetti theorem on a Klein twisted $C^*$-chain are, again, the infinite product states, now of the form $\prod\limits_{n\in\bn}\psi$ for some fixed $\psi\in\cs_{\Delta_+^\perp}(\gb)$. As expected, they will be exactly the ergodic symmetric states of the case in question. We follow the path traced in \cite{St2}, as already done in \cite{Ffe} in the Fermi case.
\begin{lem}\label{weaklimit}
Let $\om\in\ce_\bp(\ga)$. Then, $
\wlim\limits_{n\to+\infty}(\pi_\om(\rho_n(a))\xi_\omega)=\om(a)\xi_\omega\quad(a\in\ga)$.
\end{lem}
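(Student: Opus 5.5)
The plan is to read $\rho_n$ as the block–swapping permutations $g_n\in\bp$ introduced in Section \ref{prtel}, which exchange $[0,2^n-1]$ with $[2^n,2^{n+1}-1]$. The argument has three parts: reduce to localized $a$, show that every weak cluster point of the bounded sequence $\big(\pi_\om(\rho_n(a))\xi_\om\big)_n$ is a $\bp$-invariant vector, and then use ergodicity together with $\bp$-abelianness to show that the invariant vectors are exactly $\bc\xi_\om$. This last point identifies the limit uniquely.

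First, the reduction. We have $\|\pi_\om(\rho_n(a))\xi_\om\|\leq\|a\|$ for every $n$, and $\ga_\infty$, spanned by localized homogeneous elementary tensors, is dense in $\ga$. A standard $\varepsilon/3$ argument therefore reduces the claim to $a$ localized in some segment $\mathbf{m}$. Since the sequence is bounded, the weak limit exists as soon as all weak cluster points coincide, so it suffices to show that any cluster point $\eta$ equals $\om(a)\xi_\om$.

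Second, the invariance of $\eta$. Fix $h\in\bp$ and choose $k$ with $h\in\bp_{\mathbf{k}}$. For $n$ with $2^n>\max\{k,m\}$, the element $\rho_n(a)$ is localized in $[2^n,2^n+m]$, which lies outside the support of $h$. Apply formula \eqref{permj} to an elementary tensor whose entries are $\idd$ (of degree $0$) on $\mathbf{k}$. Every factor $u(\idd,b)=1$, and the permutation only reorders identities, so $h(\rho_n(a))=\rho_n(a)$. Hence $U_\om(h)\pi_\om(\rho_n(a))\xi_\om=\pi_\om(\rho_n(a))\xi_\om$ for all large $n$, and passing to the weak limit along the subnet gives $U_\om(h)\eta=\eta$. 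Thus $\eta\in E_\om\ch_\om$. Moreover, by $\bp$-invariance of $\om$,
$$
\langle\eta,\xi_\om\rangle=\lim\om(\rho_n(a))=\om(a)\,.
$$

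Third, the identification $E_\om\ch_\om=\bc\xi_\om$. This is the main obstacle, and it is where $\bp$-abelianness is used; in the relevant cases this is guaranteed by Corollary \ref{tpfekl}, since $T_\s=\varnothing$. I would invoke the standard result from \cite{S}, Section 3.1, that an invariant state is extremal (ergodic) if and only if $E_\om\pi_\om(\ga)E_\om$ acts irreducibly on $E_\om\ch_\om$. Under $\bp$-abelianness the operators $\pi_\om(x)^\bp$ mutually commute, so this irreducible family generates a commutative von Neumann algebra. Irreducibility then forces that algebra to be $\bc E_\om$ and $E_\om\ch_\om$ to be one-dimensional. Alternatively, one could show directly that the commutant contains all projections of the abelian algebra generated by $E_\om\pi_\om(\ga)E_\om$, so any nontrivial projection would decompose $\om$ into distinct invariant states, contradicting extremality. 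In either form the conclusion is $E_\om\ch_\om=\bc\xi_\om$, hence
$$
\eta=\langle\eta,\xi_\om\rangle\xi_\om=\om(a)\xi_\om\,.
$$
Since this holds for every cluster point, the weak limit exists and equals $\om(a)\xi_\om$.
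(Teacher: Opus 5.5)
Your proof is correct. The paper states this lemma without proof and defers to the argument of \cite{St2} as adapted in \cite{Ffe}, and your route is essentially that standard one: for localized $a$, weak cluster points of $\pi_\om(\rho_n(a))\xi_\om$ are $U_\om(\bp)$-invariant, and ergodicity together with $\bp$-abelianness (guaranteed here by Corollary \ref{tpfekl}, since $u_{\rm K}\lceil_{\D_+\times\D_+}=1$) forces $E_\om\ch_\om=\bc\xi_\om$. The general fact used in your third step does hold: take $0\leq S\leq E_\om$ commuting with $E_\om\pi_\om(\ga)E_\om$ and compare $\om$ with the invariant functional $a\mapsto\langle\pi_\om(a)S\xi_\om,\xi_\om\rangle\leq\om$.
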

\begin{thm}\label{extremalvin}
Let $(\gb,K_4,\beta)$ be a $C^*$-dynamical system, with $\gb$ nuclear, and $\ga$ its associated Klein chain. If $\omega\in\cs_\bp(\ga)$, the following are equivalent:
\begin{itemize}
\item[(i)] $\omega\in\ce_\bp(\ga)$
\item[(ii)] $\omega$ is strongly clustering
\item[(iii)] $\omega$ is weakly clustering in average
\item[(iv)] $\omega=\prod\limits_{n\in\bn}\psi$ for some $\psi\in\cs_{\Delta_+^\perp}(\gb)\sim\cs(\gb_{(0,0)}\oplus\gb_{(1,1)})$
\end{itemize}
In particular, $\ce_\bp(\ga)=\left\{\prod\limits_{n\in\bn}\psi\right\}_{\psi\in\cs_{\Delta_+^\perp}(\gb)}$ is weakly-$^*$ closed, thus making $\cs_\bp(\ga)$ a Bauer simplex in $\cs(\ga)$. Precisely, $(\cs_{\Delta_+^\perp}(\gb),\tau_{w^*})$ and $(\ce_\bp(\ga),\tau_{w^*})$ are homeomorphic via the mapping
\[
\begin{aligned}
\iota\colon\cs_{\Delta_+^\perp}(\gb)&\to\ce_\bp(\ga)\\
\psi&\to\prod\limits_{n\in\bn}\psi
\end{aligned}
\]
Lastly, there exists a $\bp$-invariant, densely ranged p.u. map of $C^*$-algebras \[
T\colon\ga\to C(\cs_{\Delta_+^\perp}(\gb))
\]
s.t. its transpose $T^{\text{t}}\colon\cam_1(\cs_{\Delta_+^\perp}(\gb))\to\cs_\bp(\ga)$ is an affine homeomorphism from the probability measures on $\cs_{\Delta_+^\perp}(\gb)$ onto the symmetric states on $\ga$.
\end{thm}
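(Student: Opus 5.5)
We follow the Størmer scheme of \cite{St2}, adapted as in \cite{Ffe}, and prove (i)$\Rightarrow$(ii)$\Rightarrow$(iii)$\Rightarrow$(i) and (i)$\Leftrightarrow$(iv). Here $\rho_n:=\g_{g_n}$, with $g_n$ the block-swapping permutations of Section \ref{prtel}.

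\emph{(i)$\Rightarrow$(ii).} We start from Lemma \ref{weaklimit}. For homogeneous localised $a,b\in\ga_\infty$ of degrees in $\D_+$, once $n$ is large the element $\rho_n(b)$ is localised far from $a$. By \eqref{permj} together with $u\lceil_{\D_+\times\D_+}=1$, the elements $a$ and $\rho_n(b)$ commute. The weak limit then gives $\om(a\rho_n(b))\to\om(a)\om(b)$. If instead $\partial a$ or $\partial b$ lies in $\D_-$, Theorem \ref{erg}(i) kills both sides, after splitting the products into homogeneous components. Density of $\ga_\infty$ extends the limit to all of $\ga$, which is strong clustering. \emph{(ii)$\Rightarrow$(iii)} is immediate by averaging over $\bp_{\mathbf N}$. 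For \emph{(iii)$\Rightarrow$(i)}, weak clustering in average says $E_\om=\langle\,\cdot\,,\xi_\om\rangle\xi_\om$ on the relevant vectors. Since Corollary \ref{tpfekl} gives $\bp$-abelianness, the standard argument of \cite{S}, Section 3.1, yields that $\om$ is extremal.

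\emph{(i)$\Rightarrow$(iv).} Let $\psi:=\om\circ\s_0$, which is $\D_+^\perp$-invariant by Theorem \ref{erg}(i). We show by induction on $k$ that $\om(\s_0(b_0)\cdots\s_k(b_k))=\prod_i\psi(b_i)$ for homogeneous $b_i$. Using symmetry, move the block $\s_1(b_1)\cdots\s_k(b_k)$ far away by $\rho_n$. Strong clustering then factorises the expression as $\psi(b_0)\,\om(\s_1(b_1)\cdots\s_k(b_k))$, and symmetry moves the remaining block back to the start. The twist phases cause no trouble: by \eqref{permj}, a permutation preserving the relative order inside each block introduces only factors $u(\partial b_i,\partial b_j)$ between elements of different blocks. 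When all degrees are in $\D_+$ these factors equal $1$; otherwise both sides vanish, since $\psi$ and the lower-order product are $\D_+^\perp$-invariant.

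\emph{(iv)$\Rightarrow$(i).} An infinite product state exists by the previous Corollary, which uses nuclearity and Lemma \ref{kleinpos}. It is symmetric by \eqref{permj} and the support condition. It is strongly clustering by direct computation on localised elements, hence extremal by (ii)$\Rightarrow$(i).

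For the remaining claims, the map $\iota$ is injective, since restricting to the first site recovers $\psi$. It is weak$^*$ continuous, because on localised elements it is a finite product of evaluations, and then by density and boundedness on all of $\ga$. By compactness $\iota$ is therefore a homeomorphism onto a closed set, which is $\ce_\bp(\ga)$. Hence $\cs_\bp(\ga)$, a Choquet simplex by Remark \ref{choquet}, is a Bauer simplex. For the last statement, define
\[
T(a)(\psi):=\Big(\prod_{n\in\bn}\psi\Big)(a)\,.
\]
This map is unital, positive and $\bp$-invariant. Its range is dense: it contains the products of the functions $\psi\mapsto\psi(b)$, which separate points, and is a self-adjoint subalgebra, so Stone–Weierstrass applies. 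The transpose is affine and continuous, and by the barycentric decomposition \eqref{bary} it is bijective onto $\cs_\bp(\ga)$.

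\emph{Main obstacle.} The delicate step is the phase bookkeeping in (i)$\Rightarrow$(iv) and (i)$\Rightarrow$(ii). One has to check carefully that the components in $\D_-$ genuinely drop out at every stage. This relies on the $\D_+^\perp$-invariance of partial products, proved by induction on $k$.
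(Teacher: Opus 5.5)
Your route is the St{\o}rmer scheme of \cite{St2} as adapted in \cite{Ffe}. This is exactly the path the paper indicates for this theorem; no detailed proof is given there. Your proofs that product states are symmetric and clustering, the homeomorphism $\iota$, and the construction of $T$ are handled correctly. Two steps, however, do not work as written.

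In (i)$\Rightarrow$(ii), your case split is unnecessary and partly false. If both $\partial a,\partial b\in\D_-$, then $\partial a+\partial b\in\D_+$ (e.g.\ $(0,1)+(1,0)=(1,1)$), so Theorem \ref{erg}(i) does not make $\om(a\rho_n(b))$ vanish. Neither homogeneity nor commutation is needed: Lemma \ref{weaklimit} gives directly $\om(a\rho_n(b))=\langle\pi_\om(\rho_n(b))\xi_\om,\pi_\om(a^*)\xi_\om\rangle\to\om(b)\om(a)$ for all $a,b\in\ga$.

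More seriously, (ii)$\Rightarrow$(iii) is not immediate by averaging. Strong clustering controls only the single sequence $\rho_n=\g_{g_n}$, whereas $\cam$ averages over all of $\bp_{\mathbf N}$, and convergence along one sequence says nothing about such averages in general. The missing observation is that, for $x,y$ localised in $\mathbf m$, the value $\om(x\g_g(y))$ does not depend on $g$, as long as $g$ moves the support of $y$ off $\mathbf m$. Indeed, take a permutation $h$ fixing $\mathbf m$ pointwise such that $hg$ places the factors of $y$ exactly where $g_n$ does. By \eqref{permj}, $\g_h(x)=x$ and $\g_{hg}(y)=\rho_n(y)$, since no two nontrivial factors get reordered. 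Hence $\om(x\g_g(y))=\om(\g_h(x\g_g(y)))=\om(x\rho_n(y))$. This is independent of $n$ for $n$ large, so it equals $\om(x)\om(y)$ by (ii). Lemma \ref{estimation} then shows that the remaining permutations contribute $O(1/N)$ to the average, and density concludes. Your (iv)$\Rightarrow$(i) passes through (ii)$\Rightarrow$(i), so it needs this step as well.

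Two minor points. First, the phase bookkeeping you single out as the main obstacle in (i)$\Rightarrow$(iv) disappears if the permutation used fixes site $0$ and translates $[1,k]$ in order. If the blocks do cross, the phase is $u(\partial b_0,\partial y)$ with $y:=\s_1(b_1)\cdots\s_k(b_k)$. The dichotomy should then be made on these two block degrees, not on the individual $\partial b_i$: when one of them lies in $\D_-$, $\psi(b_0)\om(y)=0$. Second, \eqref{bary} was derived for separable $\ga$. For the surjectivity of $T^{\rm t}$ in general, use that maximal measures are supported by the closure of the extreme boundary, here the closed set $\ce_\bp(\ga)$, and push them forward by $\iota^{-1}$. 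Injectivity comes from the density of the range of $T$.
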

We are now in position to establish the De Finetti theorem for Klein twisted $C^*$-chains, thus obtaining that any symmetric state is the mixture of product states, being each of them the product of a single $\Delta_+^\perp$-invariant state.
\begin{thm}[De Finetti theorem for Klein twisted $C^*$-chains]\label{defklein}
Let $(\ga,\bp)$ be the $C^*$-dynamical system associated to a unital, $K_4$-graded, nuclear $C^*$-algebra $\gb$. Then, for each $\varphi\in\cs_\bp(\ga)$, there exists a unique $\prec$-maximal $\mu_\varphi\in\cam_1(\cs_\bp(\ga))$ s.t.
\begin{equation}\label{bariklein1}
\varphi(a)=\int\limits_{\cs_\bp(\ga)}\omega(a)\,\mathrm{d}\mu_\varphi(\omega),\qquad a\in\ga\,.
\end{equation}
In particular, $\mu_\varphi$ is pseudo-supported by $\ce_\bp(\ga)=\left\{\prod\limits_{n\in\bn}\psi\right\}_{\psi\in\cs_{\Delta_+^\perp}(\gb)}$ i.e. $\mu_\varphi(B)=1$ for every $B\in\cb_0(\cs_\bp(\ga))$ containing $\ce_\bp(\ga)$. The relative weak-$^*$ topology on the unit ball $B_{\ga^*}$ of $\ga^*$ is metrizable if and only if $\gb$ is separable, in which case $\mu_\varphi$ is supported by $\ce_\bp(\ga)$ and \eqref{bariklein1} becomes
\begin{equation*}
\varphi(a)=\int\limits_{\ce_\bp(\ga)}\omega(a)\,\mathrm{d}\mu_\varphi(\omega),\qquad a\in\ga\,.
\end{equation*}
\end{thm}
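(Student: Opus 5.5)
The plan is to derive the theorem from the simplex structure already established, together with the description of the extremal points in Theorem \ref{extremalvin}. Since $u_{\rm K}\lceil_{\D_+\times\D_+}\equiv1$, Corollary \ref{tpfekl} shows that $\cs_\bp(\ga)$ is a Choquet simplex. Equivalently, by Remark \ref{choquet} (via Corollary 4.4 of \cite{Bat}), every $\f\in\cs_\bp(\ga)$ is $\bp$-abelian. For such a state, Choquet--Meyer theory on the $*$-weakly compact convex set $\cs_\bp(\ga)$ gives the following. There exists a $\prec$-maximal probability Radon measure $\mu_\f$ with barycenter $\f$, which yields \eqref{bariklein1}. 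It is unique because $\cs_\bp(\ga)$ is a simplex. It is pseudo-supported by the extreme boundary, so $\mu_\f(B)=1$ for every Baire set $B\supset\ce_\bp(\ga)$. Concretely, I would take $\mu_\f$ to be the orthogonal measure associated with the abelian von Neumann algebra $\{\pi_\f(\ga),U_\f(\bp)\}'$, as in \cite{S}, Proposition 3.1.5.

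Next, I would substitute the identification $\ce_\bp(\ga)=\{\prod_{n}\psi\}_{\psi\in\cs_{\D_+^\perp}(\gb)}$ from Theorem \ref{extremalvin}; this uses the nuclearity of $\gb$. That theorem also says $\ce_\bp(\ga)$ is $*$-weakly closed, so $\cs_\bp(\ga)$ is a Bauer simplex. In a Bauer simplex the maximal measures are exactly those carried by the closed set $\overline{\ce_\bp(\ga)}=\ce_\bp(\ga)$. Hence $\mu_\f$ is actually supported by $\ce_\bp(\ga)$ in the Borel sense, and this already holds without separability. Alternatively, one can use the affine homeomorphism $T^{\rm t}$ of Theorem \ref{extremalvin}: set $\mu_\f:=(\iota)_*\big((T^{\rm t})^{-1}\f\big)$, which gives existence and uniqueness directly. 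Both routes should be checked to agree with the maximal measure.

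Finally, for the metrizability claim I would show that $\ga$ is separable if and only if $\gb$ is. If $\gb$ is separable, then each $\ga_n$ is separable, since it is the min-completion of a countably generated algebraic twisted tensor product; hence the inductive limit $\ga$ is separable. Conversely, $\gb$ embeds into $\ga$ through $\s_0$, so separability of $\ga$ passes to $\gb$. It is standard that the unit ball of $\ga^*$ is weak$^*$ metrizable if and only if $\ga$ is separable. In that case Baire and Borel sets coincide, $\ce_\bp(\ga)$ is a $G_\delta$ (indeed closed) set, and pseudo-support becomes genuine support, giving the last displayed formula.

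The main obstacle is the ``only if'' direction in the metrizability statement together with the bookkeeping of the separability of $\ga$ along the inductive limit. I would handle it by the embedding argument above. The substantive content, namely the identification of the extremal symmetric states, is assumed from Theorem \ref{extremalvin}.
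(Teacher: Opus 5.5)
Your proposal is correct and follows essentially the route the paper intends; the paper gives no separate proof of this theorem but combines exactly these ingredients. Namely, $\cs_\bp(\ga)$ is a Choquet simplex by Corollary \ref{tpfekl} and Remark \ref{choquet}, the extremal points are identified by Theorem \ref{extremalvin}, and pseudo-support becomes genuine support once $\ga$ (equivalently $\gb$) is separable, so that Baire and Borel sets coincide. Your extra remark is also correct and slightly sharpens the stated conclusion: the Bauer property in Theorem \ref{extremalvin} already gives $\mu_\f(\ce_\bp(\ga))=1$ in the Borel sense, without any separability.
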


\section{Concluding remarks}
\label{corem}

In the present final section, we collect some results of interest which can be obtained by using the lines in \cite{St2}, and then in \cite{Ffe} (1st part, leaving the details to the reader), as well as interesting open problems (2nd and 3rd parts) on which we plan to come back in the future. In the first part, we are tacitly assuming the setting of Section \ref{klsce} and remark that the results listed there hold true whenever the set of symmetric states of any other model is a Choquet simplex and were already proved in \cite{St2} (for the usual tensor product) and \cite{Ffe} (for the Fermi Systems),
and might be applied also to some other interesting situations such as those described in the 3rd part. 

\medskip

\noindent
\textbf{On the simplex of symmetric states.} Recall that a {\it face} of the simplex $\cs$ is a convex subset $F\subset\cs$ satisfying the following hereditary property: 
if $\om\in F$ dominates $\f\in\cs$ ({\it i.e.} $\f\leq t\om$ for some $t>0$), then also $\f\in F$. 

For any $C^*$-algebra $\ga$ and for a state $\om\in\cs(\ga)$, recall that the support $s(\om)\in\ga^{**}$ of its extension to the bidual is central (i.e. $s(\om)\in Z\big(\ga^{**}\big)$) if and only if the cyclic GNS vector $\xi_\om$ is also cyclic for $\pi_\om(\ga)'$.

Suppose that $\cam\subset\cb(\ch)$ is a von Neumann algebra acting on the separable Hilbert space $\ch$.
Consider the central decomposition
$\cam=\int^\oplus_\G \cam_\g\di\n(\g)$, see {\it e.g. }\cite{S}. Here, $\G$ is a locally compact open dense subset of $\s({\rm Z}(\cam))$, and $\n$ is a Radon probability measure with ${\rm supp}(\n)=\G$. By Theorem 21.2 of \cite{N}, if $\la\in[0,1]$, the set 
$$
E_\la:=\{\g\in\G\mid \cam_\g\,\text{is a factor of type}\,\, \ty{III}{\,\!}_\la\}
$$
is measurable. For von Neumann algebras acting on separable Hilbert spaces, it is then meaningful to speak about {\it von Neumann algebras of type} $\ty{III}_\la$ as those for which $\n(E_\la)=1$.
By $p_\la\in Z(\cam)$ as the central selfadjoint projection associated to indicator function $\chi_{E_\la}$, 
$p_\la\cam=\int^\oplus_{E_\la}\cam_\g\di\n(\g)$ 
is a $\ty{III}_\la$ von Neumann algebra acting on the Hilbert space $p_\la\ch$.

The infinite $C^*$-tensor product chain based on the $C^*$-dynamical system $(\gb,G,\b)$ as in Section \ref{sevnif}, based on the hermitian bicharacter $u$ (typically those listed in Theorem \ref{three}) and using the min-norm, 
 is here denoted simply by $\ga$.

For $\#\in\{\text{I}, \ty{II_1},\ty{II_\infty}, \ty{III}\}$ and $\la\in[0,1]$, let
$$
\cs_\bp(\ga)_\#:=\{\om\in\cs_\bp(\ga)\mid \pi_\om(\ga)''\,\,\text{is of type}\,\,\#\}
$$
and, when $\gb$ is separable,
$$
\cs_\bp(\ga)_\la:=\{\om\in\cs_\bp(\ga)\mid \pi_\om(\ga)''\,\,\text{is of type}\,\,\la\}\,,
$$
respectively.

\begin{rem}
\label{face}
By following the lines in \cite{St2, Ffe}, it is possible to prove the following assertions:
\begin{itemize}
\item[{\bf-}\!{\bf-}] for each $\#$, $\cs_\bp(\ga_{\rm F})_\#$ is a face of $\cs_\bp(\ga_{\rm F})$;
\item[{\bf-}\!{\bf-}] $\cs_\bp(\ga)$ is the convex hull of its faces
$\cs_\bp(\ga)_\#$,  $\#=\text{I}, \ty{II_1},\ty{II_\infty}, \ty{III}$;
\item[{\bf-}\!{\bf-}] $\cs_\bp(\ga)_\la$ is a face of $\cs_\bp(\ga)$, provided $\gb$ is separable.
\end{itemize} 
\end{rem}
Now we pass to a class of results, analogous to the analogous ones in \cite{St2, Ffe}, whose direct proof requires the use of the Klein transformation, outlined in Section \ref{bptwtep}. Therefore, for the rest of the present subsection, we assume that the action $\b$ is inner (i.e. implemented by the inner action of unitaries of $\gb$). 

If $\D_+\equiv\D_{+,u}\subset\widehat{G}$ is the isotropy subgroup defined in \eqref{annis}, for a given state $\f\in\cs_{\D_{+}^\perp}(\gb)$ denote by $\om$ its infinite product state acting on $\ga$.
\begin{prop} [\cite{Ffe}, Proposition 10.1]
\label{stscu}
The infinite product state $\om$
on $\ga$ has central support in the bidual if and only if $\f$ has. 
\end{prop}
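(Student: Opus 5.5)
We plan to reduce the statement to the usual (untwisted) infinite tensor product via the Klein transformation of Section \ref{bptwtep}, and then use the known result for ordinary infinite tensor products of a single state. The main point is the criterion recalled above: $s(\om)$ is central iff $\xi_\om$ is cyclic for $\pi_\om(\ga)'$, i.e. iff $\xi_\om$ is separating for $\pi_\om(\ga)''$.

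\emph{Step 1: implement the Klein transformation on the chain.} Since $\b$ is inner, implemented by $g\mapsto u(g)\in\cu(\gb)$, we define on each finite stage $\ga_n$ an isomorphism $\k^{(n)}:\ga_n\to\gb\otimes_{\min}\cdots\otimes_{\min}\gb$. For homogeneous localised elements it is given by
$$
b_0\ot b_1\ot\cdots\ot b_n\mapsto \big(b_0u_{\partial b_1+\cdots+\partial b_n}\big)\otimes\big(b_1u_{\partial b_2+\cdots+\partial b_n}\big)\otimes\cdots\otimes b_n ,
$$
where $u_\t:=u(g_\t)$ with $g_\t$ as in \eqref{lgtr}. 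This is obtained by iterating the left Klein transformation $\k_{\rm L}$ and using associativity (Lemma \ref{associativity}). We check compatibility with the embeddings $\iota_n$; this is immediate because $\partial\idd=0$ and $u_0=\idd$. The maps therefore glue to a $*$-isomorphism $\k:\ga\to\bigotimes_\bn\gb$.

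\emph{Step 2: transport the product state.} Using $\k_{\rm L}^{\rm t}(\psi_{\om,\f})=\om\times\f$ inductively, together with the fact that $\f$ is $\D_+^\perp$-invariant, so that only degrees in $\D_+$ contribute, we aim to show that $\om\circ\k^{-1}$ is again an infinite product state $\f\otimes\f\otimes\cdots$ on the usual chain. On the support degrees the unitaries $u_\t$ appearing in the formula pair off, and for $\t\in\D_+$ the relevant twist factors $u(\s,\t)$ with $\s,\t\in\D_+$ are trivial. We expect $\k^{\rm t}$ to send $\otimes_n\f$ to the twisted product $\om$. Since $\k$ is a $*$-isomorphism, $(\pi_\om,\ch_\om,\xi_\om)$ is unitarily equivalent to the GNS triple of $\otimes_n\f$. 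Hence centrality of support is the same property for $\om$ and for $\otimes_n\f$.

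\emph{Step 3: the untwisted case.} For the usual infinite tensor product, $\pi_{\otimes\f}(\ga)''=\overline{\bigotimes}(\pi_\f(\gb)'',\xi_\f)$ and $\pi_{\otimes\f}(\ga)'\supset\overline{\bigotimes}(\pi_\f(\gb)',\xi_\f)$. If $\xi_\f$ is cyclic for $\pi_\f(\gb)'$, then product vectors show that $\otimes\xi_\f$ is cyclic for the commutant. Conversely, restricting to the first factor and using the conditional expectation onto the first tensor slot gives the converse. This is the argument of \cite{St2}, and we would simply quote it.

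The main obstacle is Step 2, namely verifying that the Klein transform of the twisted product state is exactly the untwisted product $\f^{\otimes\bn}$. The difficulty is that $\f$ is only $\D_+^\perp$-invariant rather than $G$-invariant, whereas $\k_{\rm L}^{\rm t}$ was stated for $\f\in\cs_G$. We would handle this by a direct computation on homogeneous localised elements, separating degrees in $\D_+$ from those in $\D_-$ exactly as in the proof of Lemma \ref{kleinpos}. If this fails, the fallback is to verify cyclicity of $\xi_\om$ for the commutant directly, using the twisted commutant generated by $\pi_\f(\gb)'$ placed in each slot and corrected by the unitaries $u(g)$.
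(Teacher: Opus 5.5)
\textbf{The gap is in Step 2, and it cannot be closed.} Your route is the one the paper itself sketches: a Klein transformation, followed by St{\o}rmer's argument. Steps 1 and 3 are sound.

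Step 2 fails in exactly the new situation. Put $\delta:=(1,1)$, so that $\D_+=\{0,\delta\}$ and $g_\delta$ generates $\D_+^\perp$. For $b\in\gb_\delta$ your formula gives $\k(\idd\ot b)=u_\delta\otimes b$. Hence $(\f\otimes\f)(\k(\idd\ot b))=\f(u_\delta)\f(b)$, while $\om(\idd\ot b)=\f(b)$.

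Invariance under $\D_+^\perp$ only yields $\f\circ{\rm Ad}(u_\delta)=\f$. It does not yield $\f(u_\delta)=1$, which would mean $\pi_\f(u_\delta)\xi_\f=\xi_\f$. The inserted unitaries do commute with the $\D_+$-components, but $\f$ is evaluated on $c\,u_\delta$, not on $u_\delta c\,u_\delta^*$, so nothing pairs off. In fact $\om\circ\k^{-1}(c_0\otimes c_1)=\f(c_0u_\delta^*)\f(c_1)$ for $c_1\in\gb_\delta$, which in general is not even a product state.

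Step 2 does hold when $\f\in\cs_G(\gb)$, since then only degree $0$ survives. That covers the usual and Fermi chains and $K_4$-invariant $\f$, but not merely $\D_+^\perp$-invariant $\f$. The paper's one-line justification rests on the same identification, so it does not fill this gap.

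\textbf{The fallback cannot work either, because the ``if'' half fails in the Klein case.} Realise the GNS triple of $\om$ on $\ch:=\bigotimes_k(\ch_\f,\xi_\f)$ by
$\pi_\om(b^{(k)})=T_{\partial b}^{\otimes k}\otimes\pi_\f(b)\otimes\idd\otimes\cdots$,
where $b^{(k)}$ is $b$ placed at site $k$. Here $\t\mapsto T_\t$ is the representation of $\widehat{K_4}$ with $T_{(1,0)}=\pi_\f(u_{(1,0)})$ and $T_\delta=V_\f$, the unitary implementing $\b_{g_\delta}$ with $V_\f\xi_\f=\xi_\f$. This is where $\D_+^\perp$-invariance is used. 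One checks that this is a $*$-representation, that $\xi_\om:=\otimes_k\xi_\f$ is cyclic, and that its vector state is $\om$.

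Now take $b\in\gb_\delta$ and product vectors $\eta,\eta'$ with tails $\xi_\f$. For large $k$ one gets $\langle\pi_\om(b^{(k)})\eta,\eta'\rangle=\f(b)\langle\mathbf V\eta,\eta'\rangle$, where $\mathbf V:=V_\f\otimes V_\f\otimes\cdots$. So $\mathbf V\in\pi_\om(\ga)''$ whenever $\f\lceil_{\gb_\delta}\neq0$. Since $\mathbf V\xi_\om=\xi_\om$, the vector $\xi_\om$ is not separating unless $V_\f=\idd$.

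A concrete counterexample:
\begin{itemize}
\item $\gb=M_2(\bc)\otimes M_2(\bc)$, with $K_4$ acting by ${\rm Ad}$ of $\sigma_z\otimes\idd$ and $\idd\otimes\sigma_z$;
\item $\f={\rm tr}(\rho\,\cdot\,)$ with $\rho=\frac14(\idd+t\,\sigma_x\otimes\sigma_x)$ and $0<t<1$.
\end{itemize}
Then $\f$ is faithful, so it has central support. It is $\D_+^\perp$-invariant, $\f(\sigma_x\otimes\sigma_x)=t$, and $V_\f\neq\idd$. Yet $\om$ has no central support.

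\textbf{What can actually be proved.} The ``only if'' half holds in general and is elementary: restrict $\pi_\om$ to the cyclic subspace of a single site, with no Klein transformation needed. The ``if'' half holds for $G$-invariant $\f$, and there your Steps 1--3 go through verbatim.
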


The type of factors generated by product states is described in the following
\begin{prop}[\cite{St2}, Thm. 2.2]
\label{stfst}
For $\f\in\cs_{G_o}(\gb)$, the product state $\om\in\cs_\bp(\ga)$ is a factor state if and only if $\f$ is. If this is the case,
\begin{itemize}
\item[(a)] $\om$ is of type $\ty{I}_1$ if and only if $\f$ is a homomorphism;
\item[(b)] $\om$ is of type $\ty{I}_\infty$ if and only if $\f$ is pure, but not  a homomorphism;
\item[(c)] $\om$ is of type $\ty{II}_1$ if and only if $\f$ is trace, but not a homomorphism;
\item[(d)] $\om$ is of type $\ty{II}_\infty$ if and only if $\om_{\xi_\f}\lceil_{\pi_\f(\gb)'}$ is trace, and $\f$ is neither pure nor a trace;
\item[(e)] $\om$ is of type $\ty{III}$ if and only if $\om_{\xi_\f}\lceil_{\pi_\f(\gb)'}$ is not a trace.
\end{itemize}
\end{prop}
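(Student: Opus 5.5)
The plan is to reduce the statement to the untwisted case of \cite{St2}, Thm.~2.2, by means of the Klein transformation recalled in Section \ref{bptwtep}. Since $\b$ is inner, implemented by a unitary representation $g\mapsto u(g)\in\cu(\gb)$, I would iterate the left Klein transformation along the chain. At step $n$ I apply $\k_{\rm L}$ to $\ga_{n}\ot\gb$, the action on $\ga_n$ being the diagonal one $\a^{(n)}$, which is again inner and implemented by $u(g)\otimes\cdots\otimes u(g)$. This gives a compatible family of $*$-isomorphisms $\k_n\colon\ga_n\to\gb\otimes_{\min}\cdots\otimes_{\min}\gb$ with $\k_{n+1}\circ\iota_n=\iota^{\otimes}_n\circ\k_n$. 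They therefore pass to the inductive limits, yielding a $*$-isomorphism $\k\colon\ga\to\bigotimes_{\bn}\gb$ onto the usual infinite minimal tensor product.

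By the transposition property $\k_{\rm L}^{\rm t}(\psi_{\om,\f})=\om\times\f$, applied inductively (the product state on the twisted side corresponds to the ordinary product on the untwisted side), I expect $\om=\big(\bigotimes_{\bn}\f\big)\circ\k$. Here $\f$ is $G_o$-invariant, with $G_o=\D_+^\perp$ in the present setting; in the Klein case the invariance needed for the product to be a state is supplied by Lemma \ref{kleinpos}. Consequently $\pi_\om\cong\pi_{\otimes\f}\circ\k$ and $\pi_\om(\ga)''=\pi_{\otimes\f}\big(\bigotimes_\bn\gb\big)''$.

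Since type and factoriality depend only on the generated von Neumann algebra, all five assertions (a)--(e), together with the factor-state equivalence, would follow verbatim from \cite{St2}, Thm.~2.2, applied to the untwisted product state $\bigotimes_\bn\f$. The conditions there are stated purely in terms of $\f$ on $\gb$: being a homomorphism, pure, a trace, and the trace property of $\om_{\xi_\f}\lceil_{\pi_\f(\gb)'}$. These are unchanged by the transformation.

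The main obstacle will be the identification $\om=(\bigotimes\f)\circ\k$. The Klein transformation in \cite{FV} sends invariant product states to product states only for the second factor invariant, and here $\f$ is merely $\D_+^\perp$-invariant. In addition, the twisting unitaries $u_\t$ inserted on earlier sites could alter the values of $\om$ on them. I would first check this on homogeneous localized elements. By Theorem \ref{erg}(i), $\om$ vanishes off the $\D_+$-graded part, so it suffices to evaluate $(\bigotimes\f)\big(\k(b_0\ot\cdots\ot b_n)\big)$ with all $\partial b_i\in\D_+$. This reduces to showing that $\f(b\,u_\t)=\f(b)$ for $\t\in\D_+$ on the relevant elements. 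If this fails, I would instead twist by $\f$ composed with an inner perturbation and argue that, since $u_\t$ is unitary in $\gb$, the perturbed state has the same type invariants as $\f$.
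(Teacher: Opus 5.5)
Your route is the paper's own. Its proof consists only of the assertion that the Klein transformation makes $\pi_\om$ unitarily equivalent to the GNS representation of $\psi_{\f,\f,\dots}$ on $\otimes_{\bn,\min}\gb$, followed by an appeal to \cite{St2}. However, the point you flag as the main obstacle is a genuine gap, and the paper's one-line assertion does not close it.

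Iterating $\k_{\rm L}$ as you describe gives $\k(b_0\ot\cdots\ot b_n)=b_0u_{s_0}\otimes b_1u_{s_1}\otimes\cdots\otimes b_n$, with $s_k=\partial b_{k+1}+\cdots+\partial b_n$ and $u_s:=u(g_s)$. Your reduction, $\f(b\,u_\t)=\f(b)$ for $\t\in\D_+$, is exactly the condition $\pi_\f(u_\t)\xi_\f=\xi_\f$, i.e. $\f(u_\t)=1$. It is also necessary as soon as $\f$ does not vanish on $\gb_{(1,1)}$. $\D_+^\perp$-invariance does not give it. Since $u\lceil_{\D_+\times\D_+}=1$ one has $g_\t\in\D_+^\perp$ for $\t\in\D_+$, hence $\f\circ{\rm Ad}(u_\t)=\f$; but this is invariance under conjugation, not invariance of the GNS vector.

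If $\f$ is $G$-invariant (in particular in the trivial and Fermi cases, where $\D_+^\perp=G$), a downward induction from the last site of nonzero degree shows that all strings drop out, and your argument is complete. In the Klein case it fails. Take $\gb=M_4(\bc)$ with basis $\{e_\s\}_{\s\in\bz_2\oplus\bz_2}$ and $u(g)={\rm diag}(\chi_\s(g))$, so that $u_{(1,1)}={\rm diag}(1,-1,-1,1)$ in the order $(0,0),(1,0),(0,1),(1,1)$. Let $\f$ be the vector state of $\xi=(e_{(1,0)}+e_{(0,1)})/\sqrt2$. It is pure and $\D_+^\perp$-invariant, but $\f(u_{(1,1)})=-1$. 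For $b=|e_{(1,0)}\rangle\langle e_{(0,1)}|$ one gets $\om(\idd\ok b)=1/2$, whereas $(\f\otimes\f)(\k(\idd\ok b))=-1/2$. In fact $\om\circ\k^{-1}=\f\otimes\f'\otimes\f\otimes\f'\otimes\cdots$, where $\f'=\f\circ\b_h$ (with $\chi_{(1,1)}(h)=-1$) is a vector state orthogonal to $\f$. So $\pi_\om$ is not even unitarily equivalent to $\pi_{\otimes\f}\circ\k$.

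Your fallback cannot repair this in general, because the unitary inserted at site $k$ depends on the total degree of all later sites, not on site $k$ alone. When $\f(u_{(1,1)})=\pm1$ (e.g. $\f$ pure) it can be absorbed. Replace $u_\t$ by $\la(\t)u_\t$, with $\la$ a character of $\bz_2\oplus\bz_2$ such that $\la(1,1)=\f(u_{(1,1)})$. The modified Klein transformation $\k_\la$ then satisfies $\om=(\bigotimes\f)\circ\k_\la$, and \cite{St2} applies.

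Now take instead $\f=\frac12(\f_\xi+\f_{\xi'})$ with $\xi'=(e_{(0,0)}+e_{(1,1)})/\sqrt2$. This state is factorial, neither pure nor a trace, and has tracial restriction to $\pi_\f(\gb)'$, which is exactly case (d). Here $\f(u_{(1,1)})=0$. The one-site marginals of $\om\circ\k^{-1}$ at sites $j\geq1$ are the normalised trace. Yet for $c_0=|e_{(0,0)}\rangle\langle e_{(1,1)}|$ and $c_1=|e_{(1,0)}\rangle\langle e_{(0,1)}|$ one has $(\om\circ\k^{-1})(c_0\otimes c_1)=1/16\neq0$. So $\om\circ\k^{-1}$ is not a product state at all, and no rescaling of the Klein unitaries changes this.

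What is missing is an argument computing the type of $\pi_\om(\ga)''$ without identifying it with $\bigotimes\pi_\f(\gb)''$. One can still realise $\pi_\om$ on $\bigotimes(\ch_\f,\xi_\f)$ with strings $\pi_\f(u_\t)W(\t)$. Here $V_\f$ is the implementation of $\b\lceil_{\D_+^\perp}$ fixing $\xi_\f$, and $W$ is a representation of $\bz_2\oplus\bz_2$ into $\pi_\f(\gb)'$ extending $(1,1)\mapsto\pi_\f(u_{(1,1)})^*V_\f(g_{(1,1)})$. The generated von Neumann algebra then contains commutant-valued strings. So (d) and (e) require re-running St{\o}rmer's arguments in this setting rather than quoting them.
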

The proof of the last two propositions relies on the following fact: the Klein transformation provides the unitary equivalence between the GNS representation of product state $\om$ on $\ga$ and the corresponding one of the product state $\psi_{\f,\f\dots}$ on
$\otimes_{\bn,\min}\gb$. Now it is enough to reason as in \cite{St2}.

\medskip

\noindent
\textbf{Symmetric states on the (infinite) noncommutative torus.} 
This part is devoted to the {\it infinite dimensional noncommutative torus}, which is nothing but the twisted tensor product of infinitely many copies of the algebra $C(\bt)$ by using the bicharacters
\begin{equation}
\label{bkyu}
u_\th(m,n):=e^{2\pi\imath\th mn}\,,\quad m,n\in\bz\,.
\end{equation}
Since $C(\bt)$ is nuclear, there is no ambiguity concerning the $C^*$-norm used in constructing such a model, denoted by
\[
\bb_\th:=C(\bt)\ot C(\bt)\ot\dots\ot C(\bt)\dots
\]
where $u$ stands for $u_\theta$.

To simplify the matter (i.e. to treat models up to $*$-isomorphisms), we reduce the matter to $\theta\in(0,1/2)$ since the case $\th=0$ is the commutative one, whereas $\th=1/2$ corresponds to the Fermi case (i.e. Section \ref{ratogato}) studied in \cite{Ffe}.

Since the bicharacters in \eqref{bkyu} are not hermitian (but the Fermi case $\th=1/2$), there is no natural action of the permutation group $\bp$ on $\bb_\th$, see Proposition \ref{flip1}. It is then unclear how to manage the symmetric states directly on this infinite tensor product $\bb_\th$. Yet, by following the general approach outlined in Section \ref{free} and using the notation of Definition \ref{free00} with $\ga^{\rm free}:=\bigast\!{}_\bn\,C(\bt)$, it is meaningful to define the symmetric states on the noncommutative torus as those coming from $\ga^{\rm free}$, according to \eqref{univw} and \eqref{epfrfe1}. With a slight abuse of notation, we denote such symmetric states as $\cs_{\bp}(\bb_\th)$. 

Taking into account the relevance of the isotropy subgroup $\D_+$
(and its annihilator $\D_+^\perp$) in order to apply the standard techniques of Ergodic Theory to investigate the symmetric states, when $\th$ is irrational we notice that the
bicharacter in \eqref{bkyu} is nondegenerate with $\D_+=\{0\}$ and, consequently, $\D_+^\perp=\bt$. This suggests that the set of symmetric states for the infinite irrational rotation algebra is the singleton made of the canonical trace. This is also confirmed by explicit computations involving commutation relations.

When $\th=m/n$ is rational with $m$ and $n$ coprime (e.g. Section \ref{ratogato}), we consider the natural action of $\bz_n$ on (continuous functions on) the torus. Notice also that, in this case, firstly $\D_+^\perp$ is in general a proper subgroup of $\bz_n$. Secondly, it is easy to verify that all entries of the matrix $u_\th\lceil_{\D_+\times\D_+}$ are 1. At the light of such considerations (e.g. Section \ref{ssttptep}) it is expected that
\begin{itemize}
\item[{\bf-}\!{\bf-}] if $\th$ is irrational, $\cs_{\bp}(\bb_\th)$ is the singleton made of the infinite product of the Haar measure on $\bt$ (i.e. the canonical trace on $\bb_\th$);
\item[{\bf-}\!{\bf-}] if $\th=m/n$ with $\gcd(m,n)=1$, $\om\in\cs_{\bp}(\bb_\th)$ has the form
$$
\om=\int_{\cs_{\Delta_+^\perp}(C(\bt))}(\psi\times\psi\cdots\times\psi\times\cdots)\di\n(\psi)
$$
for a Radon probability measure on $\cs_{\Delta_+^\perp}\big(C(\bt)\big)$.
\end{itemize}

After the appearance on the web of the dissertation \cite{V} (released in May, 2024), and just before ending the present paper, we discovered 
\cite{CDGR} devoted to the study of spreadable states on $\bb_\th$ for which the isotropy subgroup and its annihilator play a crucial role.
It should be pointed out that the isotropy subgroup, and its crucial relevance in the study of ergodic properties of classes of states on infinite twisted tensor products (as the infinite noncommutative tori are), was firstly introduced and used in the above mentioned dissertation \cite{V}.

Taking into account that the results in 
the last mentioned paper should be applicable to similar models like the CAR algebra (see also Theorem 6.6 in \cite{FMC} for similar investigations), we have the following
\begin{rem}
On the CAR algebra, the set of symmetric states does coincide with that of spreadable ones.
\end{rem}
Indeed, it is enough to note that the Fermi case corresponds to the rational case $\th=1/2$ (cf. Section
\ref{ratogato}), and then apply the results in \cite{Ffe} by taking into account the analysis in \cite{CDGR}.
The same result holds true for the infinite Klein chain as described in Section \ref{klsce}, and it is expected holding true for all infinite noncommutative tori. In particular, this answers to the question, raised (and left open) in \cite{CR}, concerning the equality of the classes of symmetric and spreadable states (i.e. the so-called Ryll-Nardzewski Theorem) for the CAR algebra.

\medskip

\noindent
\textbf{Symmetric states on various completions of the twisted tensor product.} 
To the best of the authors’ knowledge, the first results relative to the structure of symmetric states in noncommutative setting, and the relative De Finetti Theorem are provided in two contemporary and independent works \cite{HP, St2} dealing with the usual (i.e. untwisted) tensor product. Such results seem to be different because the former is referred to the completion of the infinite chain based on the max-norm, whereas the latter concerns the min-norm. 

To explain what are the kind of problems, we consider two $C^*$-algebras $\ga_1$ and $\ga_2$, supposed to be unital for simplicity, together with a unital positive linear map $\Psi:\ga_1\to\ga_2$ of $\ga_1$ onto $\ga_2$. It is clear that $\Psi^{\rm t}:\cs(\ga_2)\to\cs(\ga_1)$ is injective and maps states into states.
For $i=1,2$, consider two convex, $*$-weakly compact subsets $\cs_i\subset\cs(\ga_i)$. It is unclear under which conditions $\Psi^{\rm t}(\cs_2)=\cs_1$. A pivotal example is when
a group $G$ is acting on $\ga_1$ by $*$-automorphisms $G\stackrel{\a}{\curvearrowright}\ga_1$. In this situation, with $\ga_2=\ga_1^{G}$, $\Psi$ the conditional expectation onto the fixed-point subalgebra $\ga_1^{G}$, and $\cs_i=\cs(\ga_i)$, $i=1,2$. It is clear that $\Psi^{\rm t}(\cs_2)\subsetneq\cs_1$ in all nontrivial cases: the invariant states are, in general, a strict subset of all states of $\ga_1$.

By coming back to the situation of interest, we fix a unital $C^*$-algebra $\gb$ and consider the infinite one-sided chains $\ga_1$ and $\ga_2$ constructed using the max and the min-norm, respectively.
We notice that there is a $*$-epimorphism $\pi:\ga_1\to\ga_2$. Since the group of all finite permutations $\bp$ acts on both algebras, it is then meaningful to consider the corresponding sets of symmetric states $\cs_1$ and $\cs_2$, respectively. From Theorem 5.4 in \cite{HP} and Theorem 2.7 in \cite{St2}, it results that $\pi^{\rm t}(\cs_2)=\cs_1$ that is, roughly speaking, the set of symmetric states are the same for both infinite tensor product chains obtained by using the max or the min-norm, indifferently.

It is a very delicate question to understand whether an analogous result holds true in the case of the one-sided twisted chains constructed by using any, possibly compatible, $C^*$-norm. This is precisely the motivation for which we assumed nuclearity in Theorem \ref{extremalvin}. 

Concerning the case of interest in the present paper, including the simplest nontrivial case relative to Fermi systems, we consider a fixed $C^*$-dynamical system $(\gb, G,\a)$, being $u:\widehat{G}\times\widehat{G}\to\bt$ a hermitian bicharacter. Taking into account that Lemma \ref{associativity} seems not to depend on the chosen $C^*$-norm and, after noticing that Theorem 8.1 in \cite{FV1} would allow the extension of the flip as a $*$-automorphism of $\gb\ot_{\rm max}\gb$ as well, one might also construct the one-sided infinite chain tensor product $\ga^{(u)}_{\rm max}$ on which $\bp$ is acting. Now, in the previous picture, $\ga_1\equiv\ga^{(u)}_{\rm max}$ and $\ga_2\equiv\ga^{(u)}_{\rm min}$ (where the latter algebra was simply denoted by $\ga^{(u)}$ above).
The combined consideration of both results in \cite{HP, St2} would produce $\pi^{\rm t}\big(\cs\big(\ga^{(u)}_{\rm min}\big)\big)=\cs\big(\ga^{(u)}_{\rm max}\big)$.
Showing the analogous equality of symmetric states by replacing the max-norm with any other $C^*$-norm $\g\geq \|\,\,\,\|_{\rm min}$, as happens in Section \ref{klsce},
might be more complicated (and, perhaps, possibly not true).

\section*{Declarations}
\noindent
{\bf Conflict of interest} On behalf of all authors, the corresponding author states that there is no conflict of interest.\\
{\bf Data availability} On behalf of all authors, the corresponding author states the data availability.

\section*{Acknowledgements}
The first author acknowledges ``Excellence Department Project'', CUP E83C23000330006; and ``Tor Vergata University of Rome funding OANGQS'', CUP E83C25000580005. Furthermore, the authors would like to thank the referee for the careful reading of the manuscript and the valuable comments and suggestions, which have contributed to improving the clarity and presentation of the work.


\begin{thebibliography}{9999} 

\bibitem{AM1} Araki H., Moriya H.
{\it Equilibrium statistical mechanics of Fermion lattice systems},
Rev. Math. Phys. {\bf 15} (2003), 93--198.

\bibitem{Bat} Batty C.J.K.
{\it Simplexes of states of {$C^*$}-algebras},
J. Operator Theory {\bf 4} (1980), 3--23.

\bibitem{Bo} Boca F.-P. {\it Rotation $C^*$-algebras and almost Mathieu operators}, Theta, Bucharest, 2001.

\bibitem{BR} Bratteli O., Robinson D. W.
{\it Operator algebras and quantum statistical mechanics I, II},
Springer, Berlin--Heidelberg--New York, 2002, 2001.

\bibitem{CDGR} Crismale V., Del Vecchio S., Griseta M. E., Rossi S. {\it De Finetti theorem on the infinite non-commutative torus}, Proc. Amer. Math. Soc. {\bf 154} (2026), 723-735.

\bibitem{CF2} Crismale V., Fidaleo F. {\it Symmetries and ergodic properties in quantum probability},  Coll. Math. {\bf 149} (2017), 1-20. 

\bibitem{CR} Crismale V., Rossi S. {\it Corrigendum to ``Failure of the Ryll-Nardzewski theorem on the CAR algebra''} [J. Funct. Anal. {\bf 283} (12) (2022) 109710], J. Funct. Anal. {\bf 285} (2023), 110120.

\bibitem{DeF} De Finetti B.
{\it Funzione caratteristica di un fenomeno aleatorio}, Atti
Accad. Naz. Lincei, VI Ser., Mem. Cl. Sci. Fis. Mat. Nat. {\bf 4}
(1931), 251--259.

\bibitem{Ffe} Fidaleo F.
{\it Symmetric states for $C^*$-Fermi systems}, Rev. Math. Phys. {\bf 33} (2022), 2250030 (39 pages).

\bibitem{FMC} Fidaleo F.
{\it On Fermi Quantum Markov Chains}, Internat. J. Theor. Phys. {\bf 64} (2025), 331 (21 pages).

\bibitem{FV} Fidaleo F., Vincenzi E. {Graded $C^*$-algebras and twisted $C^*$-tensor products}, Ricerche Mat. {\bf 74} (2025),
163-184.

\bibitem{FV1} Fidaleo F., Vincenzi E. {$C^*$-norms on the twisted tensor product and nuclearity}, Ricerche Mat. {\bf 74} (2025), 2927-2948.
	
\bibitem{HS} Hewitt E., Savage L. F.
{\it Symmetric measures on Cartesian products}, Trans. Amer. Math.
Soc. {\bf 80} (1955), 470--501.

\bibitem{HP} Hulanicki A., Phelps R. R. {\it Some applications
of tensor products of partially-ordered Linear Spaces}, J. Funct. Anal. {\bf 2} (1968), 177-201.

\bibitem{Mar} Marcinek W.
{\it Particles and quantum symmetries}, Rep. Math. Phys.
{\bf 43} (1999), 239-245.

\bibitem{N} Nielsen O. A.
{\it Direct integral theory}, Marcel Dekker, New York-Basel, 1980.

\bibitem{RW} Rittenberg V., Wyler D.
{\it $\bz_2\oplus\bz_2$-graded Lie algebras and superalgebras}, J. Math. Phys.
{\bf 19} (1978), 2193-2200.

\bibitem{S} Sakai S.
{\it $C^*$-algebras and $W^*$-algebras}, Springer, Berlin--Heidelberg--New
York 1971.

\bibitem{Sch} Scheunert M.
{\it Generalized Lie algebras and superalgebras}, J. Math. Phys.
{\bf 20} (1979), 712-720.

\bibitem{Sta} Stacey P. J.
{\it An action of the Klein four group on the irrational rotation $C^*$-algebra}, Bull. Austral. Math. Soc.
{\bf 56} (1997), 135-148.

\bibitem{St2} St{\o}rmer E.
{\it Symmetric states of infinite tensor products of $C^{*}$--algebras},
J. Funct. Anal. {\bf 3} (1969), 48--68.

\bibitem{SVJ} Stoilova N. I., Van der Jeugt J.
{\it $\bz_2\times\bz_2$-graded Lie (super)algebras
and generalized quantum statistics}, Int. J. Geom. Methods Mod. Phys.
{\bf 23} (2026), 2540028 (17 pages).

\bibitem{T} Takesaki M.
{\it Theory of operator algebras I,III}, Springer, Berlin--Heidelberg--New
York 2002, 2003.

\bibitem{To} Tolstoy V. N.
{\it Once more on parastatistics}, Phys. Part. Nucl. Lett.
{\bf 11} (2014), 933-937.

\bibitem{V} Vincenzi E. {\it $C^{*}$-dynamical systems and Ergodic Theory. Quantum decoherence, twisted tensor products and De Finetti theorem}, Ph.D. thesis, University of Tor Vergata, https://www.mat.uniroma2.it/dottorato/Theses/2024/Vincenzi\%20Elia.pdf

\bibitem{WO} N. E.Wegge-Olsen
{\it K-Theory and $C^*$-Algebras}, Oxford University Press, (1993).


\end{thebibliography}
\end{document}